\newtheorem{thm}{Theorem}[section]
\newtheorem{proposition}[thm]{Proposition}
\newtheorem{lemma}[thm]{Lemma}
\newtheorem{algorithm}[thm]{Algorithm}
\newtheorem{cor}[thm]{Corollary}
\newtheorem{remark}[thm]{Remark}
\newtheorem{notation}[thm]{Notation}
\newtheorem{definition}[thm]{Definition}
\newtheorem{secfourconvention}[thm]{Convention}
\newtheorem{filter}[thm]{Filter}
\newtheorem{conjecture}[thm]{Conjecture}
\newcommand{\GL}{\mathrm{GL}}
\newcommand{\SL}{\mathrm{SL}}
\newcommand{\Sing}{\mathrm{Sing}}
\newcommand{\Supp}{\mathrm{Supp}}
\newcommand{\rank}{\mathrm{rank}}
\newcommand{\Sym}{\mathrm{Sym}}
\newcommand{\C}{\mathbb{C}}
\newcommand{\diag}{\mathrm{diag}}
\newcommand{\G}{\mathbb{G}}
\newcommand{\PP}{\mathbb{P}}
\newcommand{\wt}{\mathrm{wt}}
\newcommand{\Tor}{\mathrm{Tor}}
\newcommand{\HF}{\mathrm{HF}}
\newcommand{\Ann}{\mathrm{Ann}}
\newcommand{\nf}{\phi^{\mathrm{nf}}}
\newcommand{\Sp}{\mathrm{Sp}}
\newcommand{\Q}{\mathbb{Q}}
\newcommand{\Z}{\mathbb{Z}}
\newcommand{\R}{\mathbb{R}}
\begin{document}

\title{Boundary of the Moduli Space of Stable Cubic Fivefolds}
\author{Yasutaka Shibata}
\date{}
\maketitle

\begin{abstract}
We give a description of the strictly semistable boundary of the GIT moduli space of cubic fivefolds. Specifically, we classify all $21$ boundary components and determine the \emph{codimension-one wall adjacency} relation among them in the sense of Kirwan wall-crossing. The resulting adjacency graph is computed explicitly: it has $21$ vertices and $56$ edges (in particular, it has no isolated vertices). For each boundary component, we compute the minimal exponent of a general member and prove that it equals $7/3$, which is the critical value appearing in the stability criterion for cubic fivefolds.

Our approach is explicit: we exhibit $21$ normal forms representing the general polystable member in each boundary component and analyze the corresponding singularities. In particular, we identify exactly six analytic types of isolated singularities occurring on the boundary---\emph{extremal cubic fivefold singularities}---each given by a quasi-homogeneous normal form (in some cases with moduli) and each having minimal exponent $7/3$. We also treat the non-isolated cases: we classify the possible positive-dimensional singular loci---lines, conics, complete intersections of two quadrics, and an elliptic quartic curve---and compute their minimal exponents.
\end{abstract}

\section*{Introduction}

This paper studies the Geometric Invariant Theory (GIT) compactification of the moduli space of cubic fivefolds, i.e.\ cubic hypersurfaces $X\subset \mathbb{P}^{6}$, via the natural action of $\SL(7)$ on
\[
\mathbb{P}\bigl(\Sym^{3}\mathbb{C}^{7}\bigr)
\quad\text{and the quotient}\quad
\mathbb{P}\bigl(\Sym^{3}\mathbb{C}^{7}\bigr)// \SL(7).
\]
Our goal is to describe not only the stable locus but also the boundary contributed by strictly semistable cubics,
i.e.\ the image of the strictly semistable locus in the GIT quotient.
While the case of cubic fourfolds has been intensively studied from both classical and modern perspectives,
a comparably explicit description of the GIT boundary for cubic fivefolds has not been fully developed:
in particular, one would like concrete normal forms for closed orbits, an explicit description of the boundary families,
and a systematic analysis of the singularities that govern the geometry of degenerations.

From the GIT viewpoint, the boundary arises from the strictly semistable locus.
Understanding it naturally breaks into three tasks:
(i) identify polystable points (closed orbits) and construct explicit normal forms representing them;
(ii) understand the boundary families they generate, together with the corresponding parameter spaces and dimensions; and
(iii) extract geometrically meaningful information by analyzing the singular loci $\Sing(X)$ of these boundary representatives.
The main contribution of this paper is to carry out these steps as explicitly as possible for cubic fivefolds,
and to provide strong theoretical and computational evidence for the remaining structural assertions.

\medskip
\noindent\textbf{Irreducible components and normal forms.}
We first construct explicit polystable (closed-orbit) normal forms and the corresponding boundary families
$\Phi_k$ ($1\le k\le 21$), together with their parameter spaces and dimensions.
We then prove the non-inclusion among these $21$ families (Theorem~\ref{non-iclusion});
in particular, the families $\Phi_k$ are pairwise distinct and give exactly the $21$ irreducible components of the strictly semistable locus;
see Theorem~A.
In any case, the construction provides a concrete coordinate-level model for the boundary representatives and their degenerations.

\medskip
\noindent\textbf{Singular loci on the boundary.}
Next, we determine the singular locus of a general closed-orbit representative $X_k$ in each boundary family.
We show that $\Sing(X_k)$ consists of a one-dimensional part (when present) together with finitely many isolated singular points.
Moreover, the positive-dimensional part is a union of low-degree curves---namely lines, smooth conics, complete intersections of two quadrics,
or an elliptic quartic---whereas isolated singularities occur only in finitely many families.
Crucially, on these general representatives, all isolated boundary singularities are quasi-homogeneous and fall into precisely six analytic types.
These statements are summarized in Theorem~C.

\medskip
\noindent\textbf{The critical minimal exponent threshold and extremal singularities.}
A second theme of the paper is that these six isolated singularity types are not accidental by-products of the classification:
they capture the \emph{exact boundary} of stability in a sharp sense coming from singularity theory.
Building on Park's framework relating (local) minimal exponents to GIT stability for hypersurfaces,
we show that every isolated boundary singularity appearing here has local minimal exponent equal to
\[
\frac{7}{3}=\frac{n+1}{d}\qquad\text{for }(n,d)=(6,3),
\]
that is, it realizes the equality case at the critical threshold.
In Park's terminology, this is precisely the borderline regime where semistability can still occur while stability is no longer forced.
Motivated by this, we package these six analytic types as a distinguished class and refer to them as
\emph{extremal cubic fivefold singularities}; see Definition~\ref{def:extremal-sing} and Theorem~C.

\medskip
\noindent\textbf{Global minimal exponents on the boundary.}
The extremal value $\frac{7}{3}$ is not confined to isolated points.
We compute minimal exponents along all positive-dimensional singular loci that occur on strictly semistable closed orbits
(lines, conics, complete intersections of two quadrics, and an elliptic quartic), and show that possible drops occur only at finitely many
explicitly described special points.
As a consequence, the global minimal exponent of a general strictly semistable closed-orbit cubic fivefold in each boundary component is equal to the same critical threshold $7/3$ (see Theorem D).

\medskip
\noindent\textbf{Adjacency of boundary families.}
Finally, we analyze codimension-one wall adjacency relations among the boundary families arising from the closed-orbit normal forms.
In a setting with many boundary families, their \emph{connectivity} and \emph{proximity} control the local structure of the moduli space
and are closely tied to Kirwan-type wall-crossing.
Using Kirwan’s stratification together with an explicit slice-maximality computation on the weight set $I=\mathbb{Z}^7_{\ge 0}(3)$,
we determine the full codimension-one wall-adjacency graph among the closed strata
$\{\Phi_k\}_{k=1}^{21}\subset \Sigma^{\mathrm{ss}}$.
Contrary to a sparse picture, the resulting adjacency graph is nontrivial: it has $21$ vertices and $56$ edges (Theorem~E and Theorem~\ref{thm:adjacency}),
and in particular it has no isolated vertices.
For clarity, we distinguish genuine wall adjacencies (mutual slice matches, drawn as solid edges in Figure~\ref{fig:adjacency-codim1})
from \emph{one-sided} slice matches (drawn as dotted arrows), which do not represent codimension-one adjacency but rather reflect
higher-codimension incidence among faces.
Altogether, this provides an explicit combinatorial skeleton for the boundary, and serves as a foundation for further investigations of local deformation models
and related compactifications.

\medskip
\noindent\textbf{Idea of the proofs.}
Our approach combines standard GIT techniques---$1$-parameter-subgroup limits, computations of stabilizers and centralizers,
and normalizations via residual group actions---with explicit calculations tailored to cubic fivefolds.
For each boundary family we construct concrete normal forms and read off the structure of the singular locus directly from these models.
The minimal exponent statement is obtained by applying Park's criterion to our explicit list of isolated singularities and verifying the critical equality
$\widetilde{\alpha}_x(X)=\frac{7}{3}$.
The adjacency results follow from comparing degenerations along Kirwan strata and tracking how the boundary normal forms specialize.
Finally, the non-inclusion among the boundary families is established through a successive sieve of geometric and algebraic filters, utilizing the semicontinuity of dimensions, apolar Betti numbers, singular-locus Hilbert functions, minimal exponents, and Steenbrink spectra.

\medskip
\noindent\textbf{Organization.}
The paper provides a concrete ``dictionary'' for the GIT boundary of cubic fivefolds: the complete list of components, explicit closed-orbit normal forms, the associated singularity types, the critical minimal-exponent characterization, and the full adjacency graph.
We hope that this dictionary will serve as a basis for subsequent work on the geometry of degenerations of cubic fivefolds, local deformation theory near boundary points, and comparisons with other notions of stability.

\medskip

\noindent
\textbf{Theorem A (Boundary families and components).}
We construct $21$ explicitly described $\SL(7)$-invariant families
\[
\Phi_1,\Phi_2,\ldots,\Phi_{21}\ \subset\ \PP(W)^{ss}\setminus \PP(W)^s
\qquad (W=\Sym^{3}\C^{7}),
\]
together with explicit closed-orbit (polystable) representatives in normal form; see Table~2
(Section~\ref{sec:closed-orbit}).
Moreover, we prove the non-inclusion among these $21$ families (Theorem~\ref{non-iclusion}).
In particular, the families $\Phi_k$ are pairwise distinct and give exactly the $21$ irreducible components of
$\PP(W)^{ss}\setminus \PP(W)^s$.

\medskip

\noindent
\textbf{Theorem B (Closed-orbit test and normal-form construction).}
Let $G=\SL(7)$ act on $W=\Sym^{3}\C^{7}$.
For each boundary family $\Phi_k$ $(k=1,\dots,21)$,
choose a $1$-PS limit with stabilizer $H\subset G$ and consider the induced effective action of
the centralizer $C_G(H)$ on $W^H$.
Then closed orbits occurring in $\Phi_k$ can be represented by elements $\phi_k\in W^H$,
and $\phi_k$ can be normalized to a normal form by the residual $C_G(H)$-action.
Moreover, the closedness of $C_G(H)\cdot\phi_k$ (hence of $G\cdot\phi_k$) is decided uniformly by a dichotomy:
in the toric case by the convex-hull criterion for weights, and otherwise by the
Casimiro--Florentino criterion via a positive linear identity.
In particular, Table~2 lists explicit closed-orbit normal forms associated with all $\Phi_k$.

\medskip

\noindent
\textbf{Theorem C (Singularities on the boundary).}
Let $\nf_k$ be a general closed--orbit representative associated with $\Phi_k$ (Table~2) and set
$X_k:=V(\nf_k)\subset \PP^6$.
Then $\Sing(X_k)$ is explicitly determined (Table~3): it is a union of low--degree curves
(lines, smooth conics, complete intersections of two quadrics, or an elliptic quartic), possibly together with isolated points.
Isolated points occur exactly for $k\in\{1,5,7,8,9,11,16,19,21\}$; they are quasi--homogeneous and fall into six analytic types
(Definitions~5.1--5.6), including wild examples, and satisfy the extremal equality
\[
\widetilde{\alpha}_x(X_k)=\frac{7}{3}=\frac{n+1}{d}\qquad\text{for }(n,d)=(6,3).
\]

\medskip

\noindent
\textbf{Theorem D (Global minimal exponents on the boundary).}
For each $k\in\{1,\dots,21\}$ let $X_k:=V(\nf_k)\subset \PP^6$ be a general closed--orbit cubic fivefold.
Then the \emph{global minimal exponent} of $X_k$,
\[
\widetilde{\alpha}(X_k)\ :=\ \min_{x\in \Sing(X_k)} \widetilde{\alpha}_x(X_k),
\]
is equal to the extremal threshold value
\[
\widetilde{\alpha}(X_k)=\frac{7}{3}\qquad\text{for all }k=1,\dots,21.
\]

More precisely, if $\Sing(X_k)$ contains isolated points, then by Theorem~C each such point $x$
satisfies $\widetilde{\alpha}_x(X_k)=7/3$, hence $\widetilde{\alpha}(X_k)=\frac{7}{3}$.
If $\Sing(X_k)$ contains positive--dimensional components (lines, conics, $(2,2)$-complete intersections,
or an elliptic quartic), then $\widetilde{\alpha}_x(X_k)$ is constant at a general point of each irreducible
component, while it may drop at finitely many special points; in every case the minimum along each component
is attained at an explicitly listed special point and equals $7/3$.
Consequently, across the entire singular locus one has
\[
\min_{x\in \Sing(X_k)}\widetilde{\alpha}_x(X_k)=\frac{7}{3}.
\]
In particular, a general strictly semistable closed-orbit cubic fivefold in each of the 21 irreducible boundary components realizes the equality case
\[
\widetilde{\alpha}(X_k)=\frac{n+1}{d}\qquad\text{for }(n,d)=(6,3).
\]

\medskip

\noindent
\textbf{Theorem E (Codimension-one wall adjacency via wall-crossing).}
Write $\Sigma^{\mathrm{ss}}:=\PP(W)^{\mathrm{ss}}\setminus \PP(W)^{\mathrm{s}}$ for the strictly semistable locus.
In the sense of Kirwan wall-crossing (Definition~\ref{def:adjacency}), the codimension-one wall-adjacency graph
among the closed strata $\{\Phi_k\}_{k=1}^{21}\subset \Sigma^{\mathrm{ss}}$ has $21$ vertices and $56$ edges.
Equivalently, for each $k$ the neighbor set
\[
N(k):=\{\,j\neq k \mid \Phi_k\ \text{is adjacent to}\ \Phi_j\,\}
\]
is given explicitly in Table~\ref{tab:adjacency-codim1} (and visualized in Figure~\ref{fig:adjacency-codim1}).
All other unordered pairs $\{\Phi_i,\Phi_j\}$ are not adjacent.
In particular, the adjacency graph has \emph{no isolated vertices}.
See Theorem~\ref{thm:adjacency} and \cite{Kir84,DH98,Tha96}.

\medskip

\subsection*{Ideas and methods}

Our starting point is a convex-geometric analysis of Hilbert--Mumford weights for the natural
$\SL(7)$-action on $W=\Sym^{3}(\C^{7})$ \cite{MFK94, Dol03}.
Fix a maximal torus $T\subset \SL(7)$, and for a cubic $f\in W$ write $\Supp(f)\subset I = \mathbb{Z}^7_{(3)} \cap \mathbb{Z}^7_{\ge0}$ for its
set of exponent vectors. We enumerate the \emph{maximal $T$-strictly semistable supports}
$I(r)_{\ge 0}$ whose convex hull contains the barycenter $\eta=(3/7,\dots,3/7)$.
An algorithmic search over the faces of $\mathrm{Conv}(I)$ containing $\eta$ produces $23$ candidates up
to permutation; exactly one is $T$-unstable and is discarded, leaving $22$ families with respect
to $T$. Passing to the full $\SL(7)$-action, there is a single residual identification
$f_{21}\sim f_{22}$. We prove that no further identifications occur: namely, we establish the
non-inclusion among the $21$ boundary families (Theorem~\ref{non-iclusion}; see Section~\ref{sec:non-inclusion}).
This leads to the desired list of $21$ $\SL(7)$-inequivalent boundary families
(Algorithm~2.2, Proposition~3.4, and Theorem~\ref{21-list}).

For each family, we obtain closed $\SL(7)$-orbits by taking appropriate one-parameter subgroup limits
and applying Luna's centralizer reduction \cite{Lun75}. If the centralizer is a torus, we apply the convex-hull criterion;
if it is non-toric, we use the Casimiro--Florentino criterion \cite{CF12}. This uniform dichotomy produces explicit
closed-orbit normal forms, from which the family dimensions follow (Section~4; see also Table~2).

\subsection*{Organization of the paper}

Section~1 recalls the numerical criterion for (semi)stability and reformulates it in convex--geometric terms via supports and weight half-spaces.
Section~2 enumerates the maximal $T$-strictly semistable supports for a fixed maximal torus $T\subset \SL(7)$ (Algorithm~2.2), yielding $22$ candidates up to permutation.
Section~3 passes from the $T$-data to the full $\SL(7)$-action, removes the single residual identification, and obtains $21$ $\SL(7)$-inequivalent boundary families $\Phi_1,\dots,\Phi_{21}$ (with generic forms $f_1,\dots,f_{21}$), together with the non-inclusion statement deferred to Section~8.
Section~4 constructs explicit closed-orbit (polystable) representatives $\nf_k$ for each family via $1$-PS limits and Luna's centralizer reduction, certifies polystability by either the convex-hull criterion (toric centralizers) or the Casimiro--Florentino criterion (non-toric centralizers), and computes the dimensions of the corresponding components (summarized in Table~2).
Section~5 determines the singular scheme $\Sing(X_k)$ of each closed-orbit cubic $X_k:=V(\nf_k)\subset \mathbb{P}^6$, describing its positive-dimensional part (low-degree curves) and listing the isolated singularities; in particular, the isolated boundary singularities fall into exactly six quasi-homogeneous analytic types (summarized in Tables~3--4).
Section~6 studies minimal exponents: it first proves that each of the six isolated boundary singularity types has local minimal exponent $7/3$, and then computes the global minimal exponent $\tilde\alpha(X_k)$ for every $k=1,\dots,21$ by a case-by-case analysis along $\Sing(X_k)$.
Section~7 determines the adjacency relations among the closed strata $\{\Phi_k\}$ via wall-crossing in Kirwan’s stratification, showing that the adjacency graph has exactly 56 edges.
Section~8 proves the non-inclusion among the $21$ boundary families (Theorem~3.5) by a sieve of monotone necessary conditions (``filters''), culminating in fine geometric and algebraic obstructions based on limits of $1$-cycles, Steenbrink spectra of isolated singularities, and generic stabilizer tori that remove the final candidates.
Section~9 collects concluding remarks and outlines several future directions, including deformation/K-moduli questions for the extremal singularities and Hodge-theoretic and integrable-systems perspectives.

We present below a table comparing cubic threefolds and fourfolds.

\begin{longtable}{p{0.97\textwidth}}
\caption{Cubic threefolds and fourfolds: moduli dimension, GIT boundary components, and stability-singularity profile.}\\
\textbf{Cubic threefolds in $\mathbb{P}^4$} (See \cite{Yok02}.)\\
\emph{Moduli dimension.}
Number of monomials $\binom{5+3-1}{3}=35$, projective dimension $34$; 
$\dim \mathrm{PGL}_5=24$. Thus $\dim \mathcal{M}^{\mathrm{GIT}}=34-24=\mathbf{10}$.\\[0.25em]
\emph{Components of the GIT boundary (polystable closed orbits).}
Two irreducible components:
\begin{itemize}
\item[(i)] a $\mathbb{P}^1$-family $\{\phi_{\alpha,\beta}\}$ with parameter $[\alpha:\beta]$, and
\item[(ii)] the isolated point represented by $\phi=vwz+x^3+y^3$.
\end{itemize}
\emph{Singularity profile (stability).}
Stable $\Longleftrightarrow$ only double points of type $A_n$ with $n\le 4$.
Semistable $\Longleftrightarrow$ only $A_n$ $(n\le 5)$, $D_4$, or $A_\infty$ double points.
Inside the $\mathbb{P}^1$-component, the special member with $\alpha^2=4\beta$ is the secant threefold
(the singular locus is the rational normal curve).\\[0.25em]
\emph{Adjacency/closures.}
Both boundary components consist of closed orbits; the $\mathbb{P}^1$-component and the isolated point
are distinct boundary components (there is no specialization of one to the other).\\

\medskip

\textbf{Cubic fourfolds in $\mathbb{P}^5$} (See \cite{Laz09, Yok08, Huy23}.)\\
\emph{Moduli dimension.}
Number of monomials $\binom{6+3-1}{3}=56$, projective dimension $55$;
$\dim \mathrm{PGL}_6=35$. Thus $\dim \mathcal{M}^{\mathrm{GIT}}=\mathbf{20}$.\\[0.25em]
\emph{Boundary (closed orbit) families and their dimensions.}
There are six types, denoted $[C.1]$-$[C.6]$, of respective dimensions $1,2,3,1,1,0$.
A convenient set of normal forms is:
\begin{itemize}
  \item[$\,$\textbf{[C.1]}] $u\,q_1(w,x,y,z)+v\,q_2(w,x,y,z)$ with $V(u,v,q_1,q_2)$ smooth.
  \item[$\,$\textbf{[C.2]}] $u(xy+xz+yz+\alpha z^2)+v^2x+w^2y+2vwz$ (generic $\alpha$).
  \item[$\,$\textbf{[C.3]}] $uy^2+v^2z+l_1(w,x)\,uz+2\,l_2(w,x)\,vy+c(w,x)$ with $l_2^2\nmid c$, $l_1\nmid c$.
  \item[$\,$\textbf{[C.4]}] $uvw+c(x,y,z)$ with $V(u,v,w,c)$ smooth.
  \item[$\,$\textbf{[C.5]}] $\alpha\,uy^2+v^2z+w^2x-uxz+2vwy$ ($\alpha\neq 0$).
  \item[$\,$\textbf{[C.6]}] $uvw+xyz$.
\end{itemize}
\emph{Stability via singularities.}
A cubic fourfold with only isolated simple (ADE) singularities is GIT stable.
Conversely, non-stability occurs if any of the following conditions holds:
$\,$(1) $\mathrm{Sing}(X)$ contains a conic;
(2) $\mathrm{Sing}(X)$ contains a line;
(3) $\mathrm{Sing}(X)$ contains the intersection of two quadrics;
(4) $X$ has a double point of rank $\le 2$;
(5) a rank 3 double point with a hyperplane section whose singular locus is a line with ranks $\le 2$ along it;
(6) a rank 3 double point whose tangent-cone singular locus is a $2$-plane in $X$.\\[0.25em]
\emph{Adjacency (specialization) among boundary strata.}
If we denote the families $[C.1]$ to $[C.6]$ by $C_1, S_2, V_3, C_4, C_5, P_6$, then
\[
P_6\subset \overline{S_2}\cap \overline{V_3},\qquad
P_6\in \overline{C_1}\cap \overline{C_4}\cap \overline{C_5}.
\]
\medskip

\textbf{Cubic fivefolds in $\mathbb{P}^6$}\\
\emph{Moduli dimension.}
Number of monomials $\binom{7+3-1}{3}=84$, projective dimension $83$;
$\dim \mathrm{PGL}_7=48$. Thus $\dim \mathcal{M}^{\mathrm{GIT}}=\textbf{35}$.\\[0.25em]
\emph{Components of the GIT boundary (strictly semistable locus).}

\begin{remark}[Cubic threefolds and fourfolds rephrased via minimal exponents]
The classical GIT classifications of cubic threefolds and cubic fourfolds admit a succinct
reinterpretation in terms of the (global) minimal exponent $\widetilde{\alpha}$ in the sense of Park.

\smallskip
\noindent\textbf{Cubic threefolds.}
Let $X \subset \mathbb{P}^{4}$ be a cubic threefold. The critical value in Park's criterion is
$\frac{n+1}{d}=\frac{5}{3}$.
Park's theorem gives the sufficient conditions
$\widetilde{\alpha}(X)>\frac{5}{3}\Rightarrow X$ is GIT stable and $\widetilde{\alpha}(X) \ge \frac{5}{3}\Rightarrow X$ is GIT semistable
\cite[Theorem~A]{Par25}.
In fact, the explicit GIT analysis of cubic threefolds (see \cite{Yok02}) shows that

\begin{itemize}
\item[] $X$ is stable $\iff \widetilde{\alpha}(X) >\frac{5}{3}$
\item[] $X$ is semistable and not GIT-equivalent to the chordal cubic $\mathcal{T}$ $\iff \widetilde{\alpha}(X) \ge \frac{5}{3}$.
\end{itemize}

The remaining strictly semistable closed orbit is the chordal cubic $\mathcal{T}$, which is GIT polystable and satisfies
$\widetilde{\alpha}(\mathcal{T})=\frac{3}{2}$ \cite[Remark~7.6(2)]{Par25}.

\smallskip
\noindent\textbf{Cubic fourfolds.}
Let $X \subset \mathbb{P}^{5}$ be a cubic fourfold. Here $\frac{n+1}{d}=2$.
The classical results of Yokoyama--Laza (see \cite{Yok08,Laz09}) imply that
a cubic fourfold with only isolated simple (ADE) singularities is GIT stable, hence in particular
$\widetilde{\alpha}(X) > 2$.
Moreover, away from the one-parameter family $\chi$ of GIT polystable cubics,
semistability is characterized by the threshold $\widetilde{\alpha}(X) \ge 2$:
\begin{itemize}
\item[] $X$ semistable and not GIT-equivalent to a member of $\chi$ 
\item[] $\Rightarrow \widetilde{\alpha}(X) \ge 2 \Rightarrow$ $X$ semistable.
\end{itemize}
\cite[Remark~7.6(3)]{Par25}.

The exceptional family $\chi$ forms the polystable locus with $\widetilde{\alpha}(X)<2$:
the secant to the Veronese surface $\omega\in\chi$ has $\widetilde{\alpha}(\omega)=\frac{3}{2}$, while for all other
$X\in\chi\setminus\{\omega\}$ one has $\widetilde{\alpha}(X)=\frac{11}{6}$ \cite[Remark~7.6(3)]{Par25}.

\smallskip
These lower-dimensional cases provide useful prototypes for cubic fivefolds, where the critical value is
$\frac{n+1}{d}=\frac{7}{3}$ and our boundary isolated singularities realize the equality case at this threshold.
\end{remark}

\end{longtable}

\section*{Scripts used in this paper}
The scripts used in this paper are publicly available at \cite{Shi26}. 
In particular, the archive includes the scripts for Sections~\ref{algorithm}, \ref{sec:5}, \ref{sec:adjacency}, \ref{sec:non-inclusion}.

\section*{Use of AI Tools}
The author used ChatGPT and Gemini as auxiliary tools in several parts of the project. In Section~4, they were used to help explore and organize computations leading to candidate normal forms. In Section~5, they were used in the same way to help derive and organize candidate normal forms for isolated singularities. In Section~6, they were used to support the organization and exploration of the minimal exponent computations (e.g., structuring case-by-case checks and assisting with the interpretation and presentation of intermediate computational outputs). These tools were also used to support coding related to these computations (for example, drafting and debugging scripts). Finally, under the author's supervision, they were used for English language editing and for improving exposition, organization, and internal consistency of the manuscript through iterative discussions. All mathematical statements and results were ultimately checked and verified by explicit calculations carried out by the author, who takes full responsibility for the content.

\section*{Acknowledgments}

The author is grateful to S.\,G.~Park for valuable correspondence and insightful comments.
The author would like to thank Professor T.~Terasoma for his encouragement, advice, and valuable suggestions.
He is deeply grateful to his parents for their support during difficult times.
The author dedicates this work to the memory of Professor E.~Horikawa.

\section{Numerical criterion for cubic fivefolds}\label{num}
In this section, we review the numerical criterion for stability or semistability of cubic fivefolds. We use the following notations.

\begin{itemize}
\item Let $\mathbb{C}[x_{0},\cdots,x_{6}]_{3}$ be the set of homogeneous polynomials of degree $3$.

\item For a vector $\mathbf{x} \in \mathbb{Q}^7$, $\mathrm{wt}(\mathbf{x})=\sum_{k=0}^{6} x_{k}$ is called the weight of $\mathbf{x}$.

\item We define $\mathbb{Z}^{7}_{\ge 0}=\{\mathbf{x}=(x_{0},x_{1},\cdots,x_{6})  \in \mathbb{Z}^7 | x_{k} \ge 0 (k=0,1,\cdots,6) \}$,\\

$\mathbb{Z}^7_{(d)}=\{ \mathbf{x} \in \mathbb{Z}^7 | \mathrm{wt}(\mathbf{x})=d \}$,\\

$I=\mathbb{Z}^7_{(3)} \cap \mathbb{Z}^7_{\ge 0}$ and it is simply called the simplex.

\item For $\mathbf{r} \in \mathbb{Q}^{7}$, we define $I(\mathbf{r})_{\ge 0}=\{ \mathbf{i} \in I | \mathbf{r} \cdot \mathbf{i} \ge 0 \}$, $I(\mathbf{r})_{> 0}=\{ \mathbf{i} \in I | \mathbf{r} \cdot \mathbf{i} > 0 \}$ and $I(\mathbf{r})_{=0}=\{ \mathbf{i} \in I | \mathbf{r} \cdot \mathbf{i} = 0 \}$, here $\cdot$ denotes the standard inner product of vectors.

\item For a polynomial $f=\sum_{\mathrm{wt}(\mathbf{i})=3}a_{\mathbf{i}}x^{\mathbf{i}} \in \mathbb{C}[x_{0},\cdots,x_{6}]_{3}$, we define the support of $f$ by $\mathrm{Supp}(f)=\{ \mathbf{i} \in I | a_{\mathbf{i}} \neq 0\}$

\item We set $\eta=(3/7,3/7,3/7,3/7,3/7,3/7,3/7) \in \mathbb{Q}^7$ and it is called the barycenter of the simplex $I$.

\item A vector $\mathbf{r} \in \mathbb{Z}^{7}$ is said to be reduced when there is no integer $\alpha$ such that $|\alpha| \ge 2$ and $\frac{1}{\alpha}\mathbf{r} \in \mathbb{Z}^{7}$
\end{itemize}

We fix a maximal torus $T$ of $\SL(7)$. Consider a one-parameter subgroup (1-PS for short) $\lambda : \mathbb{G}_{m} \to \SL(7)$ whose image is contained in $T$. For a suitable basis of $\mathbb{C}^7$, $\lambda$ can be expressed as a diagonal matrix $\mathrm{diag}(t^{r_{0}},t^{r_{1}},\cdots,t^{r_{6}})$ where $t \neq 0$ is a parameter of $\mathbb{G}_{m}$. Let us choose and fix such basis. Then $\lambda$ corresponds to an element $\mathbf{r}=(r_{0},r_{1},\cdots,r_{6})$ in $\mathbb{Z}^{7}_{(0)}$. We can regard an element of $\mathbb{Z}^7_{(0)}$ as a 1-PS of $T$.

\begin{definition}
Let $s$ be a subset of $I$. We say that $s$ is not stable (resp. unstable) with respect to $T$ when $s \subseteq I(\mathbf{r})_{\ge 0}$ (resp. $s \subseteq I(\mathbf{r})_{> 0}$) for some 1-PS $\mathbf{r}$.
For $0 \neq f \in \mathbb{C}[x_{0},\cdots,x_{6}]_{3}$, we say that $f$ is not stable (resp. unstable) with respect to $T$ when $\mathrm{Supp}(f) \subseteq I$ is not stable (resp. unstable) with respect to $T$. 
\end{definition}

\begin{figure}[tbp]
  \centering
  \includegraphics[width=.7\linewidth]{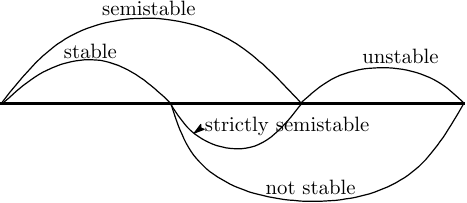}
  \caption{various concepts of stability}
  \label{fig:one}
\end{figure}

The following theorem is the numerical criterion for stability via the language of convex geometry.

\begin{thm}\label{numerical}
The cubic fivefold defined by $f \in \mathbb{C}[x_{0},\cdots,x_{6}]_{3}$ is not stable (resp. unstable) if and only if there exists an element $\sigma \in \SL(7)$ such that $f^{\sigma}$ is not stable (resp. unstable) with respect to $T$. 

In particular, $f$ is strictly semistable if and only if 
\begin{description}
\item[(1)] There exist $\sigma \in \SL(7)$ such that $f^{\sigma}$ is not stable with respect to $T$, and

\item[(2)] For any $\sigma \in \SL(7)$, $f^{\sigma}$ is semistable with respect to $T$.
\end{description}

\end{thm}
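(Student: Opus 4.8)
The plan is to deduce Theorem~\ref{numerical} from Mumford's Hilbert--Mumford numerical criterion for the linearized action of $\mathrm{SL}(7)$ on $\mathbb{P}(\mathrm{Sym}^3_7)$, together with the standard fact that every one-parameter subgroup of $\mathrm{SL}(7)$ is conjugate into the fixed maximal torus $\mathbb{T}$. Recall that for $[f]\in\mathbb{P}(\mathrm{Sym}^3_7)$ and a 1-PS $\lambda$ one has Mumford's numerical invariant $\mu([f],\lambda)\in\mathbb{Z}$, characterized by: $[f]$ is stable if and only if $\mu([f],\lambda)>0$ for every nontrivial $\lambda$, and $[f]$ is semi-stable (equivalently, not unstable) if and only if $\mu([f],\lambda)\ge 0$ for every $\lambda$. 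Taking contrapositives, $[f]$ is not stable exactly when $\mu([f],\lambda)\le 0$ for some nontrivial $\lambda$, and unstable exactly when $\mu([f],\lambda)<0$ for some $\lambda$.

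First I would make the criterion combinatorial on the torus. For $\mathbf{r}\in\mathbb{Z}^7_{(0)}$ the 1-PS $\lambda_{\mathbf{r}}(t)=\mathrm{diag}(t^{r_0},\dots,t^{r_6})$ scales each monomial $x^{\mathbf{i}}$ by a power of $t$ equal to $\mathbf{r}\cdot\mathbf{i}$, so a direct computation of the weight on the fiber of $\mathcal{O}(1)$ at $\lim_{t\to0}\lambda_{\mathbf{r}}(t)\cdot[f]$ gives $\mu([f],\lambda_{\mathbf{r}})=-\min\{\mathbf{r}\cdot\mathbf{i}:\mathbf{i}\in\mathrm{Supp}(f)\}$. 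Hence $\mu([f],\lambda_{\mathbf{r}})\le 0$ if and only if $\mathbf{r}\cdot\mathbf{i}\ge 0$ for all $\mathbf{i}\in\mathrm{Supp}(f)$, i.e.\ $\mathrm{Supp}(f)\subseteq\mathbb{I}(\mathbf{r})_{\ge0}$, and $\mu([f],\lambda_{\mathbf{r}})<0$ if and only if $\mathrm{Supp}(f)\subseteq\mathbb{I}(\mathbf{r})_{>0}$. This is precisely the definition of $f$ being not stable (resp.\ unstable) with respect to $\mathbb{T}$, so the torus version of the criterion is immediate.

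Next I would remove the restriction to $\mathbb{T}$. Since the image of any 1-PS is a torus contained in a maximal torus, and all maximal tori of $\mathrm{SL}(7)$ are conjugate to $\mathbb{T}$, every 1-PS $\lambda'$ can be written $\lambda'=\sigma^{-1}\lambda\sigma$ with $\lambda$ a 1-PS of $\mathbb{T}$ and $\sigma\in\mathrm{SL}(7)$, with nontriviality preserved. Combining this with the equivariance of the numerical invariant, $\mu([f],\sigma^{-1}\lambda\sigma)=\mu([f^{\sigma}],\lambda)$, the condition ``there exists a nontrivial $\lambda'$ with $\mu([f],\lambda')\le0$'' becomes ``there exist $\sigma$ and a nontrivial $\lambda$ of $\mathbb{T}$ with $\mu([f^{\sigma}],\lambda)\le0$'', that is, $f^{\sigma}$ is not stable with respect to $\mathbb{T}$ for some $\sigma$. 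The same rewriting with strict inequality yields the unstable statement, which proves the first assertion of the theorem.

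Finally, the strictly semi-stable characterization follows formally. By definition $f$ is strictly semi-stable if and only if it is semi-stable but not stable. The first assertion (not-stable case) gives condition~\textbf{(1)}, while the contrapositive of the first assertion (unstable case) says that $f$ is semi-stable if and only if $f^{\sigma}$ is semi-stable with respect to $\mathbb{T}$ for every $\sigma$, which is condition~\textbf{(2)}. I expect the only delicate points to be purely bookkeeping: fixing the sign convention in the definition of $\mu$ so that it matches the inequalities defining $\mathbb{I}(\mathbf{r})_{\ge0}$ and $\mathbb{I}(\mathbf{r})_{>0}$ (replacing $\mathbf{r}$ by $-\mathbf{r}$ if necessary, which is harmless since 1-PS occur in opposite pairs), and correctly tracking the direction of the conjugation in the equivariance formula. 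The genuine inputs, namely Hilbert--Mumford and the conjugacy of one-parameter subgroups into $\mathbb{T}$, are classical.
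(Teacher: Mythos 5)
Your proof is correct and follows essentially the same route as the paper: the paper's entire proof is a citation of Theorem 9.3 in Dolgachev's \emph{Lectures on Invariant Theory}, and your argument (Hilbert--Mumford criterion, the combinatorial computation of $\mu$ for diagonal 1-PS, and conjugation of an arbitrary 1-PS into the fixed maximal torus $\mathbb{T}$, plus the formal derivation of the strictly semi-stable characterization) is precisely the standard proof of that cited result. The sign and conjugation conventions you flag are indeed the only bookkeeping issues, and you handle them correctly.
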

\begin{proof}
See Theorem 9.1 of \cite{Dol03}.
\end{proof}

\section{Maximal strictly semistable cubic fivefolds with respect to the maximal torus~$T$}\label{algorithm}

In this section, we list the irreducible components corresponding to strictly semistable cubic fivefolds. For this purpose, we list all strictly semistable cubic fivefolds with respect to the maximal torus $T$. 
To solve this problem, we will consider the set of maximal strictly semistable subsets of $I$. The order in the set of subsets of $I$ is given by inclusion. 
For this purpose, we list the set of all maximal elements of $\mathcal{S}=\{ I(\mathbf{r})_{\ge 0} | \mathbf{r} \in \mathbb{Z}_{(0)}^{7} \}$.

We solve this problem computationally. We need an algorithm which enables us to obtain them in finitely many steps. Before giving such an algorithm, we remark that $I(\mathbf{r})_{\ge 0}$ and $I(\mathbf{r'})_{\ge 0}$ might be the same for two different vectors $\mathbf{r},\mathbf{r'} \in \mathbb{Z}^7_{(0)}$. 

\begin{lemma}
Let $I(\mathbf{r})_{\ge 0}$ be a maximal element of $\mathcal{S}$, where $\mathbf{r} \in \mathbb{Z}^7_{(0)}.$ Then there exist $5$ elements $\mathbf{x}_{1},\mathbf{x}_{2},\cdots,\mathbf{x}_{5} \in I$ and a vector $\mathbf{r'} \in \mathbb{Z}^7_{(0)}$ such that they satisfy the following three conditions:
\begin{description}
\item[(1)]  The vector subspace $W$ of $\mathbb{Q}^7$ spanned by $\mathbf{x}_{1},\cdots,\mathbf{x}_{5},\eta$ over $\mathbb{Q}$ has dimension $6$

\item[(2)] The vector $\mathbf{r'}$ is orthogonal to the subspace $W$ of $\mathbb{Q}^7$.

\item[(3)] $I(\mathbf{r})_{\ge 0}=I(\mathbf{r'})_{\ge 0}$  
\end{description}
\end{lemma}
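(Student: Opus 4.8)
The plan is to translate the statement into a single fact about a rational polyhedral cone and then read off the five monomials from an extremal ray. Set $S=\mathbb{I}(\mathbf{r})_{\ge 0}$ and, for a weight-zero vector $\mathbf{s}$, write $Z(\mathbf{s})=\{\mathbf{i}\in\mathbb{I}\mid \mathbf{s}\cdot\mathbf{i}=0\}$. I first record two elementary facts: every $\mathbf{i}\in\mathbb{I}$ has $\eta\cdot\mathbf{i}=9/7$, so $\eta$ is orthogonal to no monomial, and $\mathrm{wt}(\mathbf{s})=0$ is the same as $\mathbf{s}\cdot\eta=0$. With these, it suffices to produce a nonzero $\mathbf{r}'\in\mathbb{Z}^7_{(0)}$ with $\mathbb{I}(\mathbf{r}')_{\ge 0}=S$ and $\dim_{\mathbb{Q}}\mathrm{span}(Z(\mathbf{r}')\cup\{\eta\})=6$. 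Indeed, both $\eta$ and every element of $Z(\mathbf{r}')$ are orthogonal to $\mathbf{r}'$, so this span lies in the $6$-dimensional hyperplane $\mathbf{r}'^{\perp}$ and therefore equals it; extending $\{\eta\}$ to a basis of $\mathbf{r}'^{\perp}$ by elements of $Z(\mathbf{r}')\subseteq\mathbb{I}$ gives $\mathbf{x}_{1},\dots,\mathbf{x}_{5}$, and then $W=\mathbf{r}'^{\perp}$ satisfies $(1)$, $\mathbf{r}'\perp W$ gives $(2)$, and $\mathbb{I}(\mathbf{r}')_{\ge 0}=S=\mathbb{I}(\mathbf{r})_{\ge 0}$ gives $(3)$.

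The cone I would use is $C=\{\mathbf{s}\in\mathbb{Q}^{7}\mid \mathbf{s}\cdot\eta=0,\ \mathbf{s}\cdot\mathbf{i}\ge 0\ \text{for all }\mathbf{i}\in S\}$, a rational polyhedral cone containing $\mathbf{r}$. This is where maximality enters: for every nonzero $\mathbf{s}\in C$ one has $S\subseteq\mathbb{I}(\mathbf{s})_{\ge 0}$, and after scaling $\mathbf{s}$ to an integer vector this set is a member of $\mathcal{S}$; maximality of $S$ then forces $\mathbb{I}(\mathbf{s})_{\ge 0}=S$, hence $\mathbf{s}\cdot\mathbf{i}<0$ strictly for all $\mathbf{i}\in\mathbb{I}\setminus S$. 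Since $\mathbf{r}$ is a nontrivial $1$-PS and $\mathrm{wt}(\mathbf{r})=0$, not all coordinates of $\mathbf{r}$ are nonnegative, so some vertex $3\mathbf{e}_{k}$ of $\mathbb{I}$ (the exponent of $x_{k}^{3}$) has $\mathbf{r}\cdot 3\mathbf{e}_{k}<0$ and $\mathbb{I}\setminus S\neq\emptyset$; applying the strict inequality to both $\mathbf{s}$ and $-\mathbf{s}$ shows $C$ contains no line, i.e. $C$ is pointed.

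I would then take $\mathbf{r}'$ to be a primitive integer generator of an extremal ray of $C$, which exists because $C$ is a pointed rational cone with $C\neq\{0\}$. By the previous paragraph $\mathbb{I}(\mathbf{r}')_{\ge 0}=S$, so only the span condition remains. For this I use the standard description of faces: the face of $C$ whose relative interior contains $\mathbf{r}'$ has linear hull $\{\mathbf{t}\mid \mathbf{t}\cdot\eta=0,\ \mathbf{t}\cdot\mathbf{i}=0\ \forall\, \mathbf{i}\in Z(\mathbf{r}')\}=\mathrm{span}(Z(\mathbf{r}')\cup\{\eta\})^{\perp}$, of dimension $7-\dim\mathrm{span}(Z(\mathbf{r}')\cup\{\eta\})$. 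On an extremal ray this dimension equals $1$, so $\dim\mathrm{span}(Z(\mathbf{r}')\cup\{\eta\})=6$, as required. Equivalently one can argue by hand: if the span had dimension $m<6$, moving $\mathbf{r}'$ in a direction $\mathbf{u}\ne 0$ orthogonal to $Z(\mathbf{r}')\cup\{\eta\}$ keeps the active constraints active, while triviality of the lineality space of $C$ guarantees $\mathbf{u}$ is non-orthogonal to some $\mathbf{i}\in S\setminus Z(\mathbf{r}')$; pushing along $\pm\mathbf{u}$ until such a constraint becomes active enlarges the span, because $\mathbf{r}'\cdot\mathbf{i}>0$ forces $\mathbf{i}\notin\mathrm{span}(Z(\mathbf{r}')\cup\{\eta\})$.

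The only real difficulty is the input from maximality, which I would single out as the crux: the implication that $\mathbb{I}(\mathbf{s})_{\ge 0}=S$ for every nonzero $\mathbf{s}\in C$, since this both makes $C$ pointed and lets the extremal ray carry a full-rank set of active monomials. The remaining care is purely bookkeeping about $\eta$: because $\eta\cdot\mathbf{i}$ is a fixed positive constant on $\mathbb{I}$, adjoining $\eta$ to $Z(\mathbf{s})$ raises the rational dimension by exactly one, which is precisely what converts the five active monomial constraints on the extremal ray into the dimension-$6$ span demanded in $(1)$.
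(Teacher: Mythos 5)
Your proof is correct, but it runs dual to the paper's argument rather than parallel to it. The paper works with the \emph{primal} object: it forms the convex hull $\check{C}$ of $\mathbb{I}(\mathbf{r})_{\ge 0}\cup\{\eta\}$ inside the affine hyperplane of weight-$3$ points, takes a face $F$ of $\check{C}$ through $\eta$, lets $\mathbf{r}'$ be a weight-zero integral normal vector of $F$ with $\check{C}\subseteq\{\mathbf{r}'\cdot\mathbf{x}\ge 0\}$, and extracts $\mathbf{x}_{1},\dots,\mathbf{x}_{5}$ from $\mathbb{I}\cap F$; maximality is invoked once, to upgrade $\mathbb{I}(\mathbf{r})_{\ge 0}\subseteq\mathbb{I}(\mathbf{r}')_{\ge 0}$ to equality. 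You instead work with the \emph{dual} object, the cone $C$ of weight-zero functionals nonnegative on $S$, and take $\mathbf{r}'$ on an extremal ray; under polarity your extremal rays correspond to the paper's facets of $\check{C}$ through $\eta$, so the two constructions produce the same vectors, but the arguments as written are genuinely different. What your route buys is rigor exactly where the paper is terse: your observation that maximality forces $\mathbb{I}(\mathbf{s})_{\ge 0}=S$ for \emph{every} nonzero $\mathbf{s}\in C$ yields pointedness of $C$, which is equivalent to full-dimensionality of the paper's polytope $\check{C}$ --- a fact the paper never checks, yet without it a face through $\eta$ need not contain five points spanning, together with $\eta$, a six-dimensional space. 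Likewise your dimension count via the active-constraint description of the minimal face is a complete justification of what the paper waves at with ``by the definition of the faces of a convex set.'' The paper's version is shorter and more visual; yours is self-contained. Two small caveats on your write-up, neither of which affects its validity: the closing remark that adjoining $\eta$ to $Z(\mathbf{s})$ raises the dimension by exactly one is false in general, since $\eta$ can lie in $\mathrm{span}(Z(\mathbf{s}))$ (e.g.\ for $\mathbf{s}=(1,1,1,-1,-1,-1,0)$ the monomials $x_{0}x_{3}x_{6},x_{1}x_{4}x_{6},x_{2}x_{5}x_{6},x_{6}^{3}$ lie in $Z(\mathbf{s})$ and already span $\eta$), but your main argument never uses this claim --- the exchange-lemma extraction of $\mathbf{x}_{1},\dots,\mathbf{x}_{5}$ works regardless; and the alternative ``by hand'' argument should guard against the pushed point reaching $0$ before a new constraint becomes active, though the extremal-ray argument it duplicates is already complete.
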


\begin{proof}
Let us put $C=I(r) \cup \{\eta \}$. We consider the convex-hull $\check{C}$ of $C$ in $\mathbb{Q}^7$. Let $F$ be a face of $\check{C}$ containing the point $\eta$. There is a normal vector $\mathbf{r}'$ of $F$ in $\mathbb{Z}^7_{(0)}$ such that $\check{C} \subseteq \{\mathbf{x} \in \mathbb{Q}^7 | \mathbf{r}' \cdot \mathbf{x} \ge 0 \}$. We have $\mathrm{wt}(\mathbf{r'})=0$ because the hyperplane defined by $\{ \mathbf{x} \in \mathbb{Q}^{7} | \mathbf{r}' \cdot \mathbf{x}=0 \}$ passes through the point $\eta$. By the definition of the faces of a convex set in $\mathbb{Q}^7$, we can take 5 points $\mathbf{x}_{1},\mathbf{x}_{2},\cdots,\mathbf{x}_{5}$ from the set $I \cap F$ such that $\mathbf{x}_{1},\mathbf{x}_{2},\cdots,\mathbf{x}_{5},\eta$ are linearly independent over $\mathbb{Q}$. In general we have $I(\mathbf{r})_{\ge 0} \subseteq I(\mathbf{r}')_{\ge 0}$, and by the assumption that $I(\mathbf{r})_{\ge 0}$ is maximal in $\mathcal{S}$, we conclude that $I(\mathbf{r})_{\ge 0}=I(\mathbf{r}')_{\ge 0}$. 
\end{proof}

By this lemma, we can determine the set of maximal elements of $\mathcal{S}$ up to permutations of coordinates in finite steps using the following algorithm.

\begin{algorithm}
Let $\mathcal{F}$ be the set of all ordered $5$-tuples
$\mathbf{x}=(\mathbf{x}_{1},\ldots,\mathbf{x}_{5})$ of \emph{pairwise distinct} points of $I$.
Fix a total order on $\mathcal{F}$.
As initial data, set $\mathcal{S}'=\varnothing$ and let
$\mathbf{x}=(\mathbf{x}_{1},\ldots,\mathbf{x}_{5})$ be the minimum element of $\mathcal{F}$.
We modify $\mathcal{S}'$ by the following algorithm.
\begin{itemize}

\item Step 1.
Let $W\subset \mathbb{Q}^{7}$ be the $\mathbb{Q}$-subspace spanned by
$\mathbf{x}_{1},\ldots,\mathbf{x}_{5},\eta$.
If $\dim_{\mathbb{Q}}(W)=6$, then take a \emph{reduced} normal vector
$\mathbf{r}=(r_{0},\ldots,r_{6})\in \mathbb{Z}^{7}_{(0)}\setminus\{0\}$ of $W$
(i.e.\ $\mathbf{r}\cdot \mathbf{w}=0$ for all $\mathbf{w}\in W$)
and go to Step~2; otherwise, go to Step~5.

\item Step 2.
If $\mathbf{r}$ satisfies $r_{0} \ge \cdots \ge r_{6}$ or $r_{0} \le \cdots \le r_{6}$,
then go to Step~3; otherwise, go to Step~5.

\item Step 3.
If $r_{0} \ge \cdots \ge r_{6}$ (resp.\ $r_{0} \le \cdots \le r_{6}$),
add $I(\mathbf{r})_{\ge 0}$ (resp.\ $I(-\mathbf{r})_{\ge 0}$) to $\mathcal{S}'$
and go to Step~4.

\item Step 4.
Delete all elements of $\mathcal{S}'$ that are not maximal in $\mathcal{S}'$
(with respect to inclusion), and go to Step~5.

\item Step 5.
Replace the element $\mathbf{x}$ with the next element if $\mathbf{x}$ is not the maximum element,
and go to Step~1. Otherwise, stop the algorithm.

\end{itemize}
\end{algorithm}

We note that Step 2 removes the $S_{7}$ symmetry on the variables $x_{0},\cdots,x_{6}$.
We also note that Step 4 is not essential but serves as technical measure to save memory. After running this algorithm with the aid of a computer, we find 23 elements $I(\mathbf{r}_{1})_{\ge 0},\cdots,I(\mathbf{r}_{23})_{\ge 0}$ in $\mathcal{S}'$, where $\mathbf{r}_{k}=(r_{0},\cdots,r_{6}) \in \mathbb{Z}^{7}_{(0)}$ is a reduced vector with $r_{0} \ge \cdots \ge r_{6}$. When we compute the convex-hulls of $I(\mathbf{r}_{1})_{\ge 0},\cdots,I(\mathbf{r}_{23})_{\ge 0}$ in $\mathbb{Q}^{7}$, only one of the convex-hulls of $I(\mathbf{r}_{k})_{\ge 0}$ does not contain $\eta$. 
We denote it as $I(\mathbf{r}_{23})_{\ge 0}$. As $I(\mathbf{r}_{23})_{\ge 0}$ is unstable with respect to $T$, we remove it from the list. Thus, we can conclude that there are 22 maximal strictly semistable cubic fivefolds for the fixed maximal torus $T$. Because of this algorithm, we have the following proposition.

\begin{proposition}\label{vectors}
The set $\mathcal{M}=\{ I(\mathbf{r}_{1})_{\ge 0},\cdots,I(\mathbf{r}_{22})_{\ge 0} \}$ is given as follows.

\begin{tabular}[t]{|c|c|}
\hline
$\mathbf{r}_{1}=(8,3,2,-1,-2,-4,-6)$ & $\mathbf{r}_{2}=(6,4,1,-1,-2,-3,-5)$ \\ \hline
$\mathbf{r}_{3}=(4,2,1,-1,-1,-2,-3)$ & $\mathbf{r}_{4}=(3,2,1,0,-1,-2,-3)$ \\ \hline
$\mathbf{r}_{5}=(4,2,1,0,-1,-2,-4)$ & $\mathbf{r}_{6}=(5,3,2,1,-1,-4,-6)$ \\ \hline
$\mathbf{r}_{7}=(6,4,2,1,-2,-3,-8)$ & $\mathbf{r}_{8}=(4,1,1,0,-2,-2,-2)$ \\ \hline
$\mathbf{r}_{9}=(2,2,0,0,-1,-1,-2)$ & $\mathbf{r}_{10}=(2,1,0,0,-1,-1,-1)$ \\ \hline
$\mathbf{r}_{11}=(2,0,0,0,0,-1,-1)$ & $\mathbf{r}_{12}=(3,2,1,1,-1,-2,-4)$ \\ \hline
$\mathbf{r}_{13}=(2,1,1,0,-1,-1,-2)$ & $\mathbf{r}_{14}=(2,2,0,-1,-1,-1,-1)$ \\ \hline
$\mathbf{r}_{15}=(2,1,1,0,0,-2,-2)$ & $\mathbf{r}_{16}=(2,1,0,0,0,-1,-2)$ \\ \hline
$\mathbf{r}_{17}=(1,1,1,0,0,-1,-2)$ & $\mathbf{r}_{18}=(1,1,0,0,0,-1,-1)$ \\ \hline
$\mathbf{r}_{19}=(2,2,2,0,-1,-1,-4)$ & $\mathbf{r}_{20}=(1,1,1,1,0,-2,-2)$ \\ \hline
$\mathbf{r}_{21}=(1,1,0,0,0,0,-2)$ & $\mathbf{r}_{22}=(1,0,0,0,0,0,-1)$ \\ \hline
\end{tabular}
\end{proposition}

For example, $I(\mathbf{r}_{1})_{\ge 0}$ is \\
$I(\mathbf{r}_{1})_{\ge 0}=\{ x_{0}^3,x_{0}^2 x_{1},x_{0}^2 x_{2},x_{0}^2 x_{3},x_{0}^2 x_{4},
x_{0}^2 x_{5},x_{0}^2 x_{6},x_{0} x_{1}^2,x_{0} x_{1} x_{2},x_{0} x_{1} x_{3},x_{0} x_{1} x_{4},\\
x_{0} x_{1} x_{5},x_{0} x_{1} x_{6},x_{0} x_{2}^2,x_{0} x_{2} x_{3},x_{0} x_{2} x_{4},x_{0} x_{2} x_{5},x_{0} x_{2} x_{6},x_{0} x_{3}^2,x_{0} x_{3} x_{4},x_{0}
   x_{3} x_{5},x_{0} x_{3} x_{6},\\
x_{0} x_{4}^2, x_{0} x_{4} x_{5},x_{0} x_{4} x_{6},x_{0} x_{5}^2,x_{1}^3,x_{1}^2 x_{2}, x_{1}^2 x_{3},x_{1}^2 x_{4},x_{1}^2 x_{5},x_{1}^2 x_{6},x_{1} x_{2}^2,
x_{1} x_{2} x_{3},x_{1} x_{2} x_{4},\\
x_{1} x_{2} x_{5},x_{1} x_{3}^2,x_{1} x_{3} x_{4},x_{2}^3,x_{2}^2 x_{3},x_{2}^2 x_{4},x_{2}^2 x_{5},x_{2}
   x_{3}^2 \}.$\\
Here we use the notation $x_{0}^{i_{0}}x_{1}^{i_{1}}\cdots x_{6}^{i_{6}}$ for an element $(i_{0},i_{1},\cdots,i_{6}) \in \mathbb{Z}^{7}_{(3)}$ in order to save space.

\begin{remark}
The following vectors can serve as $r_{23}$, i.e., there are several vectors that yield the set $I(\mathbf{r}_{23})_{\ge 0}$. For example, we can take $\mathbf{r}_{23}=(8, 5, 3, 2, -4, -4, -10)$.

\end{remark}

\begin{remark}
The algorithm in this section has been comprehensively generalized by \textnormal{\cite{GMMS23}}.
\end{remark}

\section{21 maximal strictly semistable cubic fivefolds under action of $\SL(7)$}\label{21}

An element $I(\mathbf{r}_{k})_{\ge 0}$ of $\mathcal{M}$ represents a family of cubic fivefolds whose defining polynomial's support is contained in $I(\mathbf{r}_{k})_{\ge 0}$. In this section, we analyze the inclusion relations among $I(\mathbf{r}_{k})_{\ge 0}$ under the action of $\SL(7)$.  Let $f_{k}$ be a generic polynomial whose support is $I(\mathbf{r}_{k})_{\ge 0}$. $(k=1,2,\cdots,22)$. If we express $f_{k}$ directly, it becomes too long, so we introduce notation.

\begin{definition}
The symbols $c,q,l,\alpha$ stand for a cubic form, a quadratic form, a linear form, and a constant term, respectively. Similarly, the symbols $q_{i},l_{i},\alpha_{i}$ denote the $i$-th quadratic form, a linear form, and a constant term, respectively.
\end{definition}

The following theorem is a direct consequence of the list in Proposition~\ref{vectors}.
\begin{thm}\label{22-list}
Using the above notations, the generic polynomials of $f_{1},\cdots,f_{22}$ are the following forms.
\begin{itemize}

\item $f_{1}=c(x_{0},x_{1},x_{2})+q_{1}(x_{0},x_{1},x_{2})x_{3}+l_{1}(x_{0},x_{1},x_{2})x_{3}^2+ \{ q_{2}(x_{0},x_{1},x_{2})+l_{2}(x_{0},x_{1})x_{3} \} x_{4}+ \alpha_{1} x_{0}x_{4}^2+ \{ q_{3}(x_{0},x_{1},x_{2})+x_{0}l_{3}(x_{3},x_{4}) \} x_{5} +\alpha_{2} x_{0}x_{5}^2+ \{ q_{4}(x_{0},x_{1})+x_{0}l_{4}(x_{2},x_{3},x_{4}) \} x_{6}$

\item $f_{2}=c(x_{0},x_{1},x_{2})+q_{1}(x_{0},x_{1},x_{2})x_{3}+l_{1}(x_{0},x_{1})x_{3}^2+ \{ q_{2}(x_{0},x_{1},x_{2})+l_{2}(x_{0},x_{1})x_{3} \} x_{4}+l_{3}(x_{0},x_{1})x_{4}^2+ \{ q_{3}(x_{0},x_{1})+l_{4}(x_{0},x_{1})x_{2}+l_{5}(x_{0},x_{1})x_{3}+ \alpha_{1}x_{0}x_{4} \}x_{5}+ \alpha_{2}x_{0}x_{5}^2+ \{  q_{4}(x_{0},x_{1})+l_{6}(x_{0},x_{1})x_{2}+ \alpha_{3} x_{0}x_{3}  \}x_{6}$

\item $f_{3}=c(x_{0},x_{1},x_{2})+q_{1}(x_{0},x_{1},x_{2})x_{3}+l_{1}(x_{0},x_{1})x_{3}^2+ \{ q_{2}(x_{0},x_{1},x_{2})+l_{2}(x_{0},x_{1})x_{3}  \} x_{4}+l_{3}(x_{0},x_{1})x_{4}^2+ \{ q_{3}(x_{0},x_{1},x_{2})+x_{0}l_{4}(x_{3},x_{4})  \} x_{5}+\alpha_{1}x_{0}x_{5}^2+ \{ q_{4}(x_{0},x_{1})+l_{5}(x_{0},x_{1})x_{2}+x_{0}l_{6}(x_{3},x_{4})  \} x_{6}$

\item $f_{4}=c(x_{0},x_{1},x_{2})+\{ q_{1}(x_{0},x_{1},x_{2})+l_{1}(x_{0},x_{1},x_{2})x_{3}+\alpha_{1}x_{3}^{2} \} x_{3}+ \{ q_{2}(x_{0},x_{1},x_{2})+l_{2}(x_{0},x_{1},x_{2})x_{3} \} x_{4}+l_{3}(x_{0},x_{1})x_{4}^2+ \{ q_{3}(x_{0},x_{1},x_{2})+l_{4}(x_{0},x_{1})x_{3}+\alpha_{2}x_{0}x_{4} \} x_{5}+ \{ q_{4}(x_{0},x_{1})+l_{5}(x_{0},x_{1})x_{2}+\alpha_{3}x_{0}x_{3} \} x_{6}$

\item $f_{5}=c(x_{0},x_{1},x_{2},x_{3})+\{ q_{1}(x_{0},x_{1},x_{2})+l_{1}(x_{0},x_{1},x_{2})x_{3} \} x_{4}+l_{2}(x_{0},x_{1})x_{4}^2+ \{ q_{2}(x_{0},x_{1},x_{2})+l_{3}(x_{0},x_{1})x_{3}+ \alpha_{1}x_{0}x_{4} \} x_{5}+ \alpha_{3} x_{0} x_{5}^2 + \{ q_{3}(x_{0},x_{1})+\alpha_{4} x_{0} x_{2}+ \alpha_{2} x_{0}x_{3} \} x_{6}$

\item $f_{6}=c(x_{0},x_{1},x_{2},x_{3})+q_{1}(x_{0},x_{1},x_{2},x_{3})x_{4}+l_{1}(x_{0},x_{1},x_{2})x_{4}^2+ \{ q_{2}(x_{0},x_{1},x_{2})+l_{2}(x_{0},x_{1})x_{3}+\alpha_{1}x_{0}x_{4} \} x_{5}+ \{ q_{3}(x_{0},x_{1})+x_{0}l_{3}(x_{2},x_{3}) \} x_{6}$

\item $f_{7}=c(x_{0},x_{1},x_{2},x_{3})+q_{1}(x_{0},x_{1},x_{2},x_{3})x_{4}+l_{1}(x_{0},x_{1})x_{4}^2+ \{ q_{2}(x_{0},x_{1},x_{2})+l_{2}(x_{0},x_{1},x_{2})x_{3}+\alpha_{1}x_{0}x_{4} \}x_{5}+\alpha_{2}x_{0}x_{5}^2+ \{ q_{3}(x_{0},x_{1})+\alpha_{3}x_{0}x_{2} \} x_{6}$

\item $f_{8}=c(x_{0},x_{1},x_{2},x_{3})+\{ q_{1}(x_{0},x_{1},x_{2})+\alpha_{1}x_{0}x_{3} \} x_{4}+\alpha_{2}x_{0}x_{4}^2+ \{ q_{2}(x_{0},x_{1},x_{2})+\alpha_{3}x_{0}x_{3}+\alpha_{4}x_{0}x_{4} \} x_{5}+\alpha_{5}x_{0}x_{5}^2+\{ q_{3}(x_{0},x_{1},x_{2})+\alpha_{6}x_{0}x_{3}+\alpha_{7}x_{0}x_{4}+\alpha_{8}x_{0}x_{5} \} x_{6}+\alpha_{9}x_{0}x_{6}^2$

\item $f_{9}=c(x_{0},x_{1},x_{2},x_{3})+\{ q_{1}(x_{0},x_{1})+l_{1}(x_{0},x_{1})x_{2}+l_{2}(x_{0},x_{1})x_{3} \} x_{4}+l_{3}(x_{0},x_{1})x_{4}^2+ \{ q_{2}(x_{0},x_{1})+l_{4}(x_{0},x_{1})x_{2}+l_{5}(x_{0},x_{1})x_{3}+l_{6}(x_{0},x_{1})x_{4} \} x_{5} +l_{7}(x_{0},x_{1})x_{5}^2 +\{ q_{3}(x_{0},x_{1})+l_{8}(x_{0},x_{1})x_{2}+l_{9}(x_{0},x_{1})x_{3} \} x_{6}$

\item $f_{10}=c(x_{0},x_{1},x_{2},x_{3})+\{ q_{1}(x_{0},x_{1})+l_{1}(x_{0},x_{1})x_{2}+l_{2}(x_{0},x_{1})x_{3} \} x_{4}+\alpha_{1}x_{0}x_{4}^2+\{ q_{2}(x_{0},x_{1})+l_{3}(x_{0},x_{1})x_{2}+l_{4}(x_{0},x_{1})x_{3} \} x_{5}+\alpha_{2}x_{0}x_{5}^2+\{ q_{3}(x_{0},x_{1})+l_{5}(x_{0},x_{1})x_{2}+l_{6}(x_{0},x_{1})x_{3} \} x_{6}+\alpha_{3}x_{0}x_{6}^2+\alpha_{4}x_{0}x_{4}x_{5}+\alpha_{5}x_{0}x_{4}x_{6}+\alpha_{6}x_{0}x_{5}x_{6}$

\item $f_{11}=c(x_{0},x_{1},x_{2},x_{3},x_{4})+x_{0}l_{1}(x_{0},x_{1},x_{2},x_{3},x_{4})x_{5}+\alpha_{1}x_{0}x_{5}^2+x_{0}l_{2}(x_{0},x_{1},x_{2},x_{3},x_{4})x_{6}+\alpha_{2}x_{0}x_{6}^2+\alpha_{3}x_{0}x_{5}x_{6}$

\item $f_{12}=c(x_{0},x_{1},x_{2},x_{3})+q_{1}(x_{0},x_{1},x_{2},x_{3})x_{4}+l_{1}(x_{0},x_{1})x_{4}^2+\{ q_{2}(x_{0},x_{1},x_{2},x_{3})+\alpha_{1}x_{0}x_{4}  \} x_{5} +\{ q_{3}(x_{0},x_{1})+x_{0}l_{2}(x_{2},x_{3})  \} x_{6}$

\item $f_{13}=c(x_{0},x_{1},x_{2},x_{3})+\{ q_{1}(x_{0},x_{1},x_{2})+l_{1}(x_{0},x_{1},x_{2})x_{3}  \} x_{4} +\alpha_{1}x_{0}x_{4}^2+ \{ q_{2}(x_{0},x_{1},x_{2})+l_{2}(x_{0},x_{1},x_{2})x_{3}+\alpha_{2}x_{0}x_{4}  \} x_{5}+\alpha_{3}x_{0}x_{5}^2+\{ q_{3}(x_{0},x_{1},x_{2})+\alpha_{4}x_{0}x_{3}  \} x_{6}$

\item $f_{14}=\alpha x_{2}^{3}+x_{0}q_{1}(x_{0},\dots,x_{6})+x_{1}q_{2}(x_{1},\dots,x_{6})$

\item $f_{15}=c(x_{0},x_{1},x_{2},x_{3},x_{4})+\{ x_{0}l_{1}(x_{0},x_{1},x_{2},x_{3},x_{4})+q_{1}(x_{1},x_{2}) \} x_{5}+\{ x_{0}l_{2}(x_{0},x_{1},x_{2},x_{3},x_{4})+q_{2}(x_{1},x_{2}) \} x_{6}$

\item $f_{16}=c(x_{0},x_{1},x_{2},x_{3},x_{4})+\{ q_{1}(x_{0},x_{1})+l_{1}(x_{0},x_{1})x_{2}+l_{2}(x_{0},x_{1})x_{3}+l_{3}(x_{0},x_{1})x_{4} \} x_{5}+\alpha_{1}x_{0}x_{5}^2+\{ q_{2}(x_{0},x_{1})+x_{0}l_{4}(x_{2},x_{3},x_{4}) \} x_{6}$

\item $f_{17}=c(x_{0},x_{1},x_{2},x_{3},x_{4})+\{ q_{1}(x_{0},x_{1},x_{2})+l_{1}(x_{0},x_{1},x_{2})l_{2}(x_{3},x_{4}) \} x_{5}+q_{2}(x_{0},x_{1},x_{2}) x_{6}$

\item $f_{18}=c(x_{0},x_{1},x_{2},x_{3},x_{4})+\{ x_{0}l_{1}(x_{0},x_{1},x_{2},x_{3},x_{4})+x_{1}l_{2}(x_{0},x_{1},x_{2},x_{3},x_{4}) \} x_{5}+\{ x_{0}l_{3}(x_{0},x_{1},x_{2},x_{3},x_{4})+x_{1}l_{4}(x_{0},x_{1},x_{2},x_{3},x_{4}) \} x_{6}$

\item $f_{19}=c(x_{0},x_{1},x_{2},x_{3})+\{ q_{1}(x_{0},x_{1},x_{2})+l_{1}(x_{0},x_{1},x_{2})x_{3} \} x_{4}+l_{2}(x_{0},x_{1},x_{2})x_{4}^2+ \{ q_{2}(x_{0},x_{1},x_{2})+l_{3}(x_{0},x_{1},x_{2})x_{3}+l_{4}(x_{0},x_{1},x_{2})x_{4} \} x_{5} + l_{5}(x_{0},x_{1},x_{2})x_{5}^2 + q_{3}(x_{0},x_{1},x_{2}) x_{6}$

\item $f_{20}=c(x_{0},x_{1},x_{2},x_{3},x_{4})+q_{1}(x_{0},x_{1},x_{2},x_{3})x_{5}+q_{2}(x_{0},x_{1},x_{2},x_{3})x_{6}$

\item $f_{21}=c(x_{0},x_{1},x_{2},x_{3},x_{4},x_{5})+q(x_{0},x_{1})x_{6}$

\item $f_{22}=c(x_{0},x_{1},x_{2},x_{3},x_{4},x_{5})+x_{0}l(x_{0},x_{1},x_{2},x_{3},x_{4},x_{5})x_{6}$

\end{itemize}
\end{thm}

For an element $\sigma$ in $\SL(7)$ and $\mathbb{J} \subseteq I$, we set $\mathbb{J}^{\sigma}=\cup_{f} \mathrm{Supp}(f^{\sigma})$, where $f$ runs through all polynomials with $\mathrm{Supp}(f) \subseteq \mathbb{J}$.

\begin{definition}
We denote 
$$I(\mathbf{r}_{k})_{\ge 0} \subseteq I(\mathbf{r}_{l})_{\ge 0} \ \mathrm{mod} \ \SL(7)$$
 when there exists $\sigma \in \SL(7)$ such that $I(\mathbf{r}_{k})_{\ge 0}^{\sigma} \subseteq I(\mathbf{r}_{l})_{\ge 0}$ and say that $I(\mathbf{r}_{k})_{\ge 0}$ is included in $I(\mathbf{r}_{l})_{\ge 0}$ modulo $\SL(7)$.
\end{definition}

We construct a smaller subset $\mathcal{M}'$ of $\mathcal{M}$ such that $(1)$ any element $I(\mathbf{r}_{k})_{\ge 0}$ in $\mathcal{M}$ is included in some element $I(\mathbf{r}_{l})_{\ge 0}$  in $\mathcal{M}'$ mod $\SL(7)$, $(2)$ any element $I(\mathbf{r}_{k})_{\ge 0}$ in $\mathcal{M}'$ is not included in any other $I(\mathbf{r}_{l})_{\ge 0}$ in $\mathcal{M}'$ mod $\SL(7)$ $(1 \le l \le 22)$

\begin{proposition}
There are two relations
\begin{itemize}
\item $I(\mathbf{r}_{21})_{\ge 0} \subseteq I(\mathbf{r}_{22})_{\ge 0} \ \mathrm{mod} \ \SL(7)$
\item $I(\mathbf{r}_{22})_{\ge 0} \subseteq I(\mathbf{r}_{21})_{\ge 0} \ \mathrm{mod} \ \SL(7)$.
\end{itemize}
\end{proposition}

\begin{proof}
\begin{itemize}
\item[] $f_{22}=c(x_{0},\cdots,x_{5})+x_{0}l(x_{0},\cdots,x_{5})x_{6}$
\item[] $\equiv c(x_{0},\cdots,x_{5})+x_{0}l(x_{0},x_{1})x_{6}$
\item[] $\equiv c(x_{0},\cdots,x_{5})+q(x_{0},x_{1})x_{6}$
\item[] $=f_{21}$
\end{itemize}
Here $\equiv$ means equality after an $\SL(7)$ change of coordinates.
\end{proof}

From this proposition, we can remove $f_{22}$ from the list. Thus, we obtain a list of $21$ types of cubic fivefolds.

\begin{thm}\label{non-iclusion}
For any $1\le k,l\le 21$ with $k\ne l$, there is no element $g\in \SL(7)$ such that
\[
g\cdot I(\mathbf r_k)_{\ge 0}\subseteq I(\mathbf r_l)_{\ge 0}.
\]
\end{thm}

\begin{proof}
We postpone the proof of this theorem to Section \ref{sec:non-inclusion}.
\end{proof}

\begin{thm}\label{21-list}
The strictly semistable locus
$$\PP(W)^{ss}\setminus \PP(W)^s$$ has $21$ irreducible components, represented by
$f_1,\dots,f_{21}$.
\end{thm}

\section{Closed orbits of 21 families}\label{sec:closed-orbit}
In this section, we find the closed orbits in the 21 families of strictly semistable cubic fivefolds. We define $\lambda_{k}(t) \colon \mathbb{G}_{m} \to \SL(7)$ as 1-PSs corresponding to $\mathbf{r}_{k}$.

\noindent\textit{We shall use the following convex--geometric criterion repeatedly in this section, so we record it at the outset.}

For the following theorem, see \cite{PV94}.
\begin{thm}[Convex-hull criterion]\label{thm:convex-hull}
Let $T$ be an algebraic torus acting linearly on a finite-dimensional vector space, $V$, and let $v\in V$. Subsequently, the following conditions are equivalent:
\begin{enumerate}
  \item the $T$-orbit $T\cdot v$ is closed in $V$;
  \item $0$ is an interior point of the convex-hull of $\mathrm{Supp}(v)$ in $X(T)_\mathbb{R}$.
\end{enumerate}
Here, $\mathrm{Supp}(v)$ denotes the set of $T$-weights that occur in $v$, and $X(T)_\mathbb{R}$ is the real vector space spanned by the character lattice $X(T)$.
\end{thm}

The following series of definitions and theorems will be extremely useful for determining polystability in cases where the centralizer is not a torus and the convex-hull criterion cannot be applied. Because they will be used repeatedly from this point on, we state them here beforehand.

\begin{notation}
Let $G$ be a reductive algebraic group, and let $X$ be an affine variety equipped with an algebraic $G$-action.
\begin{itemize}
\item[$(1)$] We denote by $Y(G)$ the set of one-parameter subgroups of $G$.
\item[$(2)$] When $x \in X$, we put $\Lambda_{x}= \{ \lambda (t) \in Y(G) \colon \lim_{t \to0} \lambda (t) \cdot x \text{ exists} \}$.
\item[$(3)$] When $\lambda \in Y(G)$, we define $P(\lambda)=\{ g \in G \colon \lim_{t \to 0} \lambda(t) \cdot g \cdot \lambda(t)^{-1} \text{ exists}  \}$. It is a parabolic subgroup of G.
\end{itemize}
\end{notation}

\begin{definition}\label{def:symmetric}
We say that a subset $\Lambda \subset Y(G)$ is symmetric if given any $\lambda \in \Lambda$, there is another 1-PS $\lambda^{\prime} \in \Lambda$ such that $P(\lambda) \cap P(\lambda^{\prime})$ is a Levi subgroup of both $P(\lambda)$ and $P(\lambda^{\prime})$.

\end{definition}

We need the following theorem (Theorem~1.1 of \cite{CF12}):

\begin{thm}(Casimiro--Florentino criterion)\label{thm:CF}
Let $G$ be a reductive algebraic group and $X$ be an affine $G$-variety. Then, a point $x \in X$ is polystable if and only if $\Lambda_{x}$ is symmetric.
\end{thm}

\begin{lemma}[rank 1 symmetry for Casimiro--Florentino]\label{lem:rank1-symmetry}
Let $G$ be a reductive algebraic group acting on an affine $G$-variety $X$, and let $x\in X$.
Assume that there exists a one-parameter subgroup $\mu\in Y(G)$ such that
\[
\Lambda_x \;=\; \{\mu_k \mid k\in\mathbb{Z}\} \qquad
\mu_k(t):=\mu(t^k).
\]
Then $\Lambda_x$ is symmetric in the sense of Definition~\ref{def:symmetric}.
Consequently, by the Casimiro--Florentino criterion (Theorem~\ref{thm:CF}), the point $x$ is polystable.
\end{lemma}

\begin{proof}
Fix $k\neq 0$.
It is standard that $P(\mu_k)$ and $P(\mu_{-k})$ are opposite parabolic subgroups and that
\[
P(\mu_k)\cap P(\mu_{-k})
\;=\; C_G\!\bigl(\mu(\mathbb{G}_m)\bigr)
\]
which is a Levi subgroup of both $P(\mu_k)$ and $P(\mu_{-k})$.
The inclusion \(C_{G}\bigl(\mu(\mathbb{G}_m)\bigr)\subset P(\mu_{k})\cap P(\mu_{-k})\) is immediate: if \(g\) centralizes \(\mu(\mathbb{G}_m)\), then \(\mu_{k}(t)\,g\,\mu_{k}(t)^{-1}=g\) for all \(t\in\mathbb{G}_m\), so both limits as \(t\to 0\) and \(t\to\infty\) exist. For the reverse inclusion, take \(g\in P(\mu_{k})\cap P(\mu_{-k})\) and consider
\[
\phi:\mathbb{G}_m\longrightarrow G,\qquad \phi(t):=\mu_{k}(t)\,g\,\mu_{k}(t)^{-1}.
\]
By assumption, the limits \(\lim_{t\to 0}\phi(t)\) and \(\lim_{t\to\infty}\phi(t)\) both exist (the latter because \(g\in P(\mu_{-k})\) and \(\phi(1/s)=\mu_{-k}(s)\,g\,\mu_{-k}(s)^{-1}\) for \(s\to 0\)). Hence, \(\phi\) extends to a morphism \(\tilde\phi:\mathbb{P}^1\to G\). As \(G\) is affine, \(\tilde\phi\) is constant; in particular,
\[
\mu_{k}(t)\,g\,\mu_{k}(t)^{-1}=\phi(t)=\phi(1)=g \quad\text{for all }t\in\mathbb{G}_m.
\]
Thus, \(g\) centralizes \(\mu_{k}(\mathbb{G}_m)=\mu(\mathbb{G}_m)\), i.e. \(g\in C_{G}\bigl(\mu(\mathbb{G}_m)\bigr)\). This proves \(P(\mu_{k})\cap P(\mu_{-k})=C_{G}\bigl(\mu(\mathbb{G}_m)\bigr)\).

Based on the hypothesis $\mu_k,\mu_{-k}\in\Lambda_x$, so the requirement in Definition~\ref{def:symmetric} is satisfied for every $\lambda=\mu_k$.
Hence, $\Lambda_x$ is symmetric.
The last assertion follows from Theorem~\ref{thm:CF} (Casimiro--Florentino).
\end{proof}

\begin{secfourconvention}\label{convention}
Throughout this section, we fix the following setup and notation.

(1) \textbf{Coordinates and the maximal torus.}
We work on $W=\Sym^3\C^7$ with homogeneous coordinates $(x_0,\dots,x_6)$.
We fix the diagonal maximal torus $T\subset SL(7)$ acting by
$\diag(\mu_0,\dots,\mu_6)$ with $\prod_i \mu_i=1$.
A one-parameter subgroup (1-PS) is written as
\[
\lambda(t)=\diag\!\bigl(t^{a_0},\dots,t^{a_6}\bigr),\qquad \sum_{i=0}^6 a_i=0 .
\]

(2) \textbf{1-PS limits and centralizer reduction.}
For each $k\in\{1,\dots,21\}$ let $\lambda_k$ correspond to $r_k$ in Proposition~2.3, 
and let $f_k$ be the generic member from Theorem~3.2.
We set the 1-PS limit
\[
\phi_k:=\lim_{t\to0}\lambda_k(t)\cdot f_k .
\]
Put $H:=\lambda_k(\G_m)$ and write $C_G(H)$ for the centralizer in $G=SL(7)$.
Let $W^H\subset W$ be the $H$-fixed subspace. 
By Luna's centralizer reduction \cite{Lun75}, closedness of $SL(7)\cdot \phi_k$ in $ \mathbb{P}(W)^{ss}$ 
is equivalent to closedness of $C_G(H)\cdot \phi_k$ in $W^H$.

(3) \textbf{Two criteria for polystability.}
If $C_G(H)$ is a torus, we certify closedness by the \emph{convex-hull criterion}
(Theorem~4.1).
If $C_G(H)$ is non-toric, we use the Casimiro--Florentino criterion
(Theorem~4.4) as follows:
for $\lambda\in Y\!\left(C_G(H)\right)$ we write 
$\wt_\lambda(x_i)$ for the $\lambda$-weight on $x_i$, and
$w(m)$ for the induced weight of a monomial $m$.
We denote by $S$ the linear constraint coming from $\det=1$ on $C_G(H)$ 
(the ``trace'' of block weights), so $S=0$ for every $\lambda$.
We then exhibit a positive linear identity
\[
\sum_j c_j\, w(m_j) \;=\; C\cdot S \qquad(c_j>0,\; C>0).
\]
If $\lambda\in \Lambda_{\phi_k}$, then every $w(m_j)\ge0$ and $S=0$; hence, all 
$w(m_j)=0$; solving yields a symmetric $1$-PS $\mu_k$, so 
$\Lambda_{\phi_k}$ is symmetric and $\phi_k$ is polystable.

(4) \textbf{Normal forms and coefficient normalizations.}
Passing to ``normal form,'' we are allowed to:
(i) multiply by a nonzero scalar;
(ii) act by $C_G(H)/H$ (e.g.,\ block $GL(2)$, $GL(3)$ actions) to diagonalize
blocks; and
(iii) use diagonal elements of $T$ (with $\prod \mu_i=1$)
to normalize nonzero coefficients to $1$.
Parameters $(\alpha,\rho,\sigma,\dots)$ record the residual moduli.

(5) \textbf{Dimension count.}
Component dimensions are computed as
\[
\dim(W^H)-\dim_{\mathrm{eff}}\!\bigl(C_G(H)\bigr)-1,
\]
where $\dim_{\mathrm{eff}}$ is the dimension of the effective $C_G(H)$-action
on $W^H$ (central tori acting trivially are subtracted). 
We state explicitly when a central factor acts trivially.

(6) \textbf{Weights and symbols.}
We freely reuse symbols $a_i$ for nonzero coefficients of $\phi_k$ prior to normalization.
For 1-PS families obtained in the CF-check, we write $\mu_k(t)$.
All such conventions are in force throughout §4.
\end{secfourconvention}

Let us now determine, for each $k=1,2,\dots,21$, a polynomial whose $\SL(7)$-orbit is closed. The procedure is uniform across all cases. First, we take a $1$-PS limit to produce a candidate $\phi_k$ for a polystable point. Next, to apply Luna's criterion, we take the stabilizer $H\subset G=\SL(7)$ of $\phi_k$; we choose $H$ as large as possible so that its centralizer $C_G(H)$ is as small as possible, which simplifies the closedness check. Write $W^H$ for the $H$-fixed locus in the ambient representation $W=\mathrm{Sym}^3\mathbb{C}^7$. If $C_G(H)$ is a torus, we apply the convex-hull criterion (Theorem\ref{thm:convex-hull}) to show that $C_G(H)\cdot\phi_k$ is closed in $W^H$; if $C_G(H)$ is not a torus, we instead apply the Casimiro--Florentino criterion (Theorem \ref{thm:CF}) to obtain the same conclusion. 
 In either case, $C_G(H)\cdot\phi_k$ is closed in $W^H$; hence, by Luna's criterion, the orbit $\SL(7)\cdot\phi_k$ is closed in $\mathbb{P}(\mathrm{Sym}^3\mathbb{C}^7)^{ss}$. 
 Lastly, by determining a normal form of $\phi_k$ under the action of $C_G(H)/H$, we fix the dimension of the corresponding component of the moduli space for that $k$.

\subsection{Case $k=1$}

\paragraph{1-PS limit.}
Set
\[
\lambda_{1}(t)=\mathrm{diag}\!\bigl(t^{8},\,t^{3},\,t^{2},\,t^{-1},\,t^{-2},\,t^{-4},\,t^{-6}\bigr),
\quad t\in \mathbb{G}_m .
\]
For a generic $f_{1}$ as in Section~3, the $1$-PS limit is
\[
\phi_{1}:=\lim_{t\to 0}\lambda_{1}(t)\cdot f_{1}
= a_{1}\,x_{2}x_{3}^{2}
+ a_{2}\,x_{1}x_{3}x_{4}
+ a_{3}\,x_{2}^{2}x_{5}
+ a_{4}\,x_{0}x_{5}^{2}
+ a_{5}\,x_{1}^{2}x_{6}
+ a_{6}\,x_{0}x_{4}x_{6}.
\]

\paragraph{$H$ and $C_{G}(H)$.}
Let $H=\lambda_{1}(\mathbb{G}_m)$.
The diagonal weights on $\langle x_{0},\dots,x_{6}\rangle$ are pairwise distinct; hence,
$C_{G}(H)=T$ (the maximal diagonal torus).
Each monomial of $\phi_{1}$ has $H$-weight $0$, so $\phi_{1}\in W^{H}$.

\paragraph{Polystability (Luna + convex-hull).}
By Luna's criterion (see \cite{Lun75}), closedness of the $SL(7)$-orbit of $\phi_{1}$ is equivalent
to closedness of the $T$-orbit in $W^{H}$.
By the convex-hull criterion (Theorem~4.1), it suffices to check that $0$ is an interior
point of $\mathrm{Conv}(\mathrm{Supp}(\phi_{1}))\subset X(T)_{\mathbb{R}}\simeq
\mathbb{R}^{7}/\mathbb{R}(1,\dots,1)$.
This holds because the exponent vectors of the six monomials satisfy
\begin{equation}\label{eq:case1-hull}
\begin{aligned}
&2(0,0,1,2,0,0,0)+2(0,1,0,1,1,0,0)+2(0,0,2,0,0,1,0)\\
&\qquad +2(1,0,0,0,0,2,0)+2(0,2,0,0,0,0,1)+4(1,0,0,0,1,0,1)
=6(1,1,1,1,1,1,1),
\end{aligned}
\end{equation}
Hence, $\phi_{1}$ is polystable and $SL(7)\cdot \phi_{1}$ is closed.

\paragraph{Normal form and component dimension.}
Let $\mathrm{diag}(\mu_{0},\dots,\mu_{6})\in T$ with $\prod_{i=0}^{6}\mu_{i}=1$.
Along with the overall projective scaling, this acts on the six coefficients of
$\phi_{1}$ via the characters determined by the exponent vectors in
\eqref{eq:case1-hull}; we may normalize all six coefficients to $1$ simultaneously.
Thus, a normal form is
\[
\nf_{1}
= x_{2}x_{3}^{2}
+ x_{1}x_{3}x_{4}
+ x_{2}^{2}x_{5}
+ x_{0}x_{5}^{2}
+ x_{1}^{2}x_{6}
+ x_{0}x_{4}x_{6}.
\]

\paragraph{Residual $T$-stabilizer.}
Recall that $H=\lambda_1(\mathbb G_m)$ and $C_G(H)=T$, and that every monomial of
$\varphi_1$ has $H$-weight $0$, so $H$ acts trivially on $W^H$ and the effective group
on $\mathbb P(W^H)$ is $T/H$.
Let $\tau=\diag(\mu_0,\ldots,\mu_6)\in T$ with $\prod_{i=0}^6\mu_i=1$.
For the normal form
\[
\nf_1
= x_2x_3^2+x_1x_3x_4+x_2^2x_5+x_0x_5^2+x_1^2x_6+x_0x_4x_6,
\]
the induced $T$-characters on the six monomials are
\begin{align*}
\chi_1&=\mu_2\mu_3^2 &&(x_2x_3^2), &
\chi_2&=\mu_1\mu_3\mu_4 &&(x_1x_3x_4),\\
\chi_3&=\mu_2^2\mu_5 &&(x_2^2x_5), &
\chi_4&=\mu_0\mu_5^2 &&(x_0x_5^2),\\
\chi_5&=\mu_1^2\mu_6 &&(x_1^2x_6), &
\chi_6&=\mu_0\mu_4\mu_6 &&(x_0x_4x_6).
\end{align*}
Since we work projectively, $\tau$ stabilizes $[\nf_1]\in\mathbb P(W^H)$
if and only if $\tau\cdot\nf_1=c\,\nf_1$ for some
$c\in\mathbb C^\times$, equivalently
\[
\chi_1=\chi_2=\chi_3=\chi_4=\chi_5=\chi_6.
\]
Solving these equations (together with $\prod\mu_i=1$) yields $\mu_4^7=\mu_3^{14}$,
hence $\mu_4=\mu_3^2\xi$ with $\xi^7=1$. Writing $t:=\mu_3^{-1}$, we can factor
\[
\tau=\lambda_1(t)\cdot \delta_\xi,
\qquad
\delta_\xi:=\diag(\xi^{-9},\xi^{-4},\xi^{-3},1,\xi,\xi^3,\xi^5).
\]
Therefore
\[
\mathrm{Stab}_T\!\bigl([\nf_1]\bigr)
=H\cdot\{\delta_\xi:\ \xi^7=1\}\ \cong\ H\times\mu_7,
\]
so the induced stabilizer in the effective group $T/H$ is the finite group $\mu_7$.
In particular, after normalizing all coefficients to $1$, no continuous torus symmetry
remains; hence the corresponding boundary component is zero-dimensional.

\subsection{Case $k=2$}

\paragraph{1-PS limit.}
Set
\[
\lambda_{2}(t)
=
\operatorname{diag}\!\bigl(t^{6},\,t^{4},\,t,\,t^{-1},\,t^{-2},\,t^{-3},\,t^{-5}\bigr),
\qquad
t\in\mathbb{G}_{m}.
\]
For a generic $f_{2}$ as in Section~3, the $1$-PS limit is
\[
\phi_{2}
:=
\lim_{t\to 0}\lambda_{2}(t)\!\cdot\! f_{2}
=
a_{1}x_{2}^{2}x_{4}
+a_{2}x_{1}x_{4}^{2}
+a_{3}x_{1}x_{3}x_{5}
+a_{4}x_{0}x_{5}^{2}
+a_{5}x_{1}x_{2}x_{6}
+a_{6}x_{0}x_{3}x_{6}.
\]

\paragraph{$H$ and $C_{G}(H)$.}
Let $H=\lambda_{2}(\mathbb{G}_{m})$.
The diagonal weights on $\langle x_{0},\dots,x_{6}\rangle$ are pairwise distinct; hence,
$C_{G}(H)=T$ (the maximal diagonal torus).
Each monomial of $\phi_{2}$ has $H$-weight $0$, so $\phi_{2}\in W^{H}$.

\paragraph{Polystability (Luna + convex-hull).}
By Luna's criterion, closedness of the $SL(7)$-orbit of $\phi_{2}$ is equivalent to closedness of the $T$-orbit in the $H$-fixed subspace.
By the convex-hull criterion (Theorem~4.1), it suffices to check that $0$ lies in the interior of
$\operatorname{Conv}(\operatorname{Supp}(\phi_{2}))
\subset
X(T)_{\mathbb{R}}
\simeq
\mathbb{R}^{7}/\mathbb{R}(1,\ldots,1)$.
This holds because
\begin{equation}\label{case2}
\begin{aligned}
&2(0,0,2,0,1,0,0)
+2(0,1,0,0,2,0,0)
+2(0,1,0,1,0,1,0)
\\
&\quad
+2(1,0,0,0,0,2,0)
+2(0,1,1,0,0,0,1)
+4(1,0,0,1,0,0,1)
\\
&=6(1,1,1,1,1,1,1),
\end{aligned}
\end{equation}
written in terms of the exponent vectors of the six monomials of $\phi_{2}$.
Hence, $\phi_{2}$ is polystable and the orbit $SL(7)\cdot\phi_{2}$ is closed.

\paragraph{Normal form and component dimension.}
A diagonal scaling $\operatorname{diag}(\mu_{0},\dots,\mu_{6})\in T$ with $\prod\mu_{i}=1$, together with an overall scalar, acts on the six coefficients via the characters determined by the exponent vectors in \eqref{case2}.
Thus we may normalize all six coefficients to $1$ simultaneously.
Therefore, a normal form is
\[
\nf_{2}
=
x_{2}^{2}x_{4}
+ x_{1}x_{4}^{2}
+ x_{1}x_{3}x_{5}
+ x_{0}x_{5}^{2}
+ x_{1}x_{2}x_{6}
+ x_{0}x_{3}x_{6}.
\]
The residual $T$-stabilizer is finite; hence, the corresponding component $\Phi_{2}$ of the moduli is zero-dimensional.

\subsection{Case $k=3$}

\paragraph{1-PS limit.}
Set
\[
  \lambda_3(t)
  = \operatorname{diag}\!\bigl(t^{4},\, t^{2},\, t,\, t^{-1},\, t^{-1},\, t^{-2},\, t^{-3}\bigr),
  \qquad t\in \mathbb{G}_m .
\]
For a generic $f_3$ as in Section~3, the $1$-PS limit is
\begin{align*}
  \phi_3
  := \lim_{t\to 0}\lambda_3(t)\cdot f_3
   ={}& a_1\, x_1 x_3^{2} + a_2\, x_1 x_3 x_4 + a_3\, x_1 x_4^{2}
\\ &\quad
      + a_4\, x_2^{2} x_5 + a_5\, x_0 x_5^{2}
      + a_6\, x_1 x_2 x_6 + a_7\, x_0 x_3 x_6 + a_8\, x_0 x_4 x_6 .
\end{align*}

\paragraph{$H$ and $C_G(H)$.}
Let $H=\lambda_3(\mathbb{G}_m)$. The multiplicities of the diagonal weights on
$\langle x_0,\dots,x_6\rangle$ are $1$ on $x_0,x_1,x_2,x_5,x_6$ and $2$ on
$\langle x_3,x_4\rangle$; hence,
\[
  C_G(H)
  = \left\{
    \begin{aligned}
      &\operatorname{diag}(\alpha_0,\alpha_1,\alpha_2) \oplus A \oplus \operatorname{diag}(\alpha_5,\alpha_6) :\\[-1pt]
      &\quad \alpha_i\in \mathbb{G}_m,\ A\in \mathrm{GL}(2),\
            \alpha_0\alpha_1\alpha_2\,\det(A)\,\alpha_5\alpha_6=1
    \end{aligned}
  \right\}
  \cong \mathrm{SL}(2)\times \mathbb{G}_m^{5}.
\]
Each monomial of $\phi_3$ has $H$-weight $0$, so $\phi_3$ is $H$-fixed.

\paragraph{Polystability (Luna + Casimiro--Florentino).}
By Luna's slice/centralizer reduction, the closedness of the $\mathrm{SL}(7)$-orbit
of $\phi_3$ is equivalent to polystability for the $C_G(H)$-action on the
$H$-fixed subspace. After conjugating inside the $\mathrm{SL}(2)$-block, any
$\lambda \in Y(C_G(H))$ may be chosen with weights
\[
  \mathrm{wt}(x_0,\dots,x_6)=(a_0,\,a_1,\,a_2,\,c+n,\,c-n,\,a_5,\,a_6),
  \qquad
  S:=a_0+a_1+a_2+2c+a_5+a_6=0
\]
(as in Convention~\ref{convention}.
Let $w_i$ be the $\lambda$-weight of the $i$-th monomial of $\phi_3$. Then
\begin{align*}
  w_1&=a_1+2(c+n), & w_2&=a_1+2c, & w_3&=a_1+2(c-n), \notag\\
  w_4&=2a_2+a_5,   & w_5&=a_0+2a_5, & w_6&=a_1+a_2+a_6, \label{eq:k3-weights}\\
  w_7&=a_0+(c+n)+a_6, & w_8&=a_0+(c-n)+a_6. \notag
\end{align*}
A direct computation yields the positive linear identity
\begin{equation}\label{eq:k3-identity}
  w_1+w_2+2 w_3+2 w_4+2 w_5+2 w_6+3 w_7+w_8
  \;=\; 6 S .
\end{equation}
If $\lambda\in \Lambda_{\phi_3}$, then all $w_i\ge 0$ and $S=0$; by
\eqref{eq:k3-identity} they must all vanish. Solving gives
\[
  n=0,\quad a_1=-2c,\quad a_5=-2a_2,\quad a_0=4a_2,\quad a_6=2c-a_2,\quad a_2=-c .
\]
Thus, with $k\in\mathbb{Z}$,
\[
  \mu_k(t):=\operatorname{diag}\!\bigl(t^{4k},\,t^{2k},\,t^{k},\,t^{-k},\,t^{-k},\,t^{-2k},\,t^{-3k}\bigr),
  \qquad
  \Lambda_{\phi_3}=\{\mu_k\mid k\in\mathbb{Z}\}.
\]
Therefore $\Lambda_{\phi_3}$ is symmetric, and by the Casimiro--Florentino
criterion $\phi_3$ is polystable; in particular, $\mathrm{SL}(7)\cdot \phi_3$ is
closed.

\paragraph{Normal form and component dimension.}
On the $H$-fixed slice, the eight weight-zero monomials are
\[
x_1x_3^2,\ x_1x_3x_4,\ x_1x_4^2,\ x_2^2x_5,\ x_0x_5^2,\ x_1x_2x_6,\ x_0x_3x_6,\ x_0x_4x_6,
\]
Therefore, $W^H$ denotes their span. 
i.e. 
\[
W^{H} = \underbrace{\langle x_{1} \rangle \otimes \mathrm{Sym}^{2}\langle x_{3},x_{4} \rangle}_{\mathrm{(I)}}
\oplus \underbrace{\mathrm{Sym}^{2}\langle x_{2} \rangle \otimes x_{5}}_{\mathrm{(II)}}
 \oplus \underbrace{x_{0} \otimes \mathrm{Sym}^{2}\langle x_{5} \rangle}_{\mathrm{(III)}}
  \oplus \underbrace{x_{0} \otimes \langle x_{3},x_{4} \rangle \otimes x_{6}}_{\mathrm{(IV)}}
  \oplus \underbrace{x_{1} \otimes \langle x_{2} \rangle \otimes x_{6}}_{\mathrm{(V)}}.
\]
The centralizer is
\[
C_G(H)\cong \mathrm{SL}(2)\times \mathbb G_m^5,
\]
acting by $\mathrm{SL}(2)$ on $\langle x_3,x_4\rangle$ and by a diagonal torus on
$\langle x_0,x_1,x_2,x_5,x_6\rangle$ (subject to the product-one condition).  

\paragraph{(I) binary quadratic on $\langle x_3,x_4\rangle$.}
Write the $x_1$-part as a binary quadratic
$Q=a_1x_3^2+a_2x_3x_4+a_3x_4^2\in\Sym^2\langle x_3,x_4\rangle$.
As $\Delta(Q)$ is $\mathrm{SL}(2)$-invariant while rescaling $x_1$ scales $Q$ (and hence $\Delta$) homogeneously,
for a generic (nondegenerate) $Q$, there exists $A\in\mathrm{SL}(2)$ and a rescaling of $x_1$
such that $Q\sim x_3^2+x_3x_4+x_4^2$.
Thus, the block $\mathrm{(I)}$ is fixed to $x_1x_3^2+x_1x_3x_4+x_1x_4^2$.

\paragraph{(II)(III)(IV)(V) torus normalizations.} 
Let $T'=\{\diag(\mu_0,\mu_1,\mu_2)\oplus I_2\oplus\diag(\mu_5,\mu_6):\ \mu_0\mu_1\mu_2\mu_5\mu_6=1\}$. 
Under $T'$, the coefficients transform by the characters determined by exponent vectors: 
$x_2^2x_5$ by $\mu_2^2\mu_5$, $x_0x_5^2$ by $\mu_0\mu_5^2$, $x_1x_2x_6$ by $\mu_1\mu_2\mu_6$, while $x_0x_3x_6$ and $x_0x_4x_6$ both by $\mu_0\mu_6$. 
Using $\mu_0,\mu_1,\mu_2,\mu_5,\mu_6$ (together with an overall scalar), we set $x_2^2x_5$, $x_0x_5^2$, and $x_1x_2x_6$ to have coefficient $1$. 
More precisely, we are imposing the conditions
\[ 
\mu_{2}^{2}\mu_{5}=1,\qquad \mu_{0}\mu_{5}^{2}=1,\qquad \mu_{1}\mu_{2}\mu_{6}=1,
\] 
together with the product-one constraint \[ \mu_{0}\mu_{1}\mu_{2}\mu_{5}\mu_{6}=1. \] 
We can solve these equations explicitly: choose $\mu_2,\mu_6\in\mathbb C^\times$ freely, set
\[
\mu_5=\mu_2^{-2},\qquad \mu_0=\mu_5^{-2}=\mu_2^{4},\qquad \mu_1=(\mu_2\mu_6)^{-1}.
\]
Then the product-one constraint $\mu_0\mu_1\mu_2\mu_5\mu_6=1$ is automatically satisfied.
Consequently, we can normalize the coefficients of $x_2^2x_5$, $x_0x_5^2$, and $x_1x_2x_6$ to be all equal to $1$.

(IV)The remaining pair and the residual $\SL(2)$-symmetry.
The pair $(x_0x_3x_6,\;x_0x_4x_6)$ is scaled by the same torus character $\mu_0\mu_6$,
so the diagonal torus $T'$ alone does not change their ratio.
However, after fixing the nondegenerate binary quadratic
\[
Q \sim x_3^2+x_3x_4+x_4^2,
\]
the stabilizer $\mathrm{Stab}_{\SL(2)}(Q)$ is \emph{not} finite: it is a one-dimensional torus
(isomorphic to $\mathrm{SO}(2,\C)\cong \mathbb{G}_{m}$).
Its induced action on $\PP\langle x_3,x_4\rangle$ has two fixed points (the two roots of $Q$)
and a single open orbit.
Hence, for a generic element (i.e.\ when the linear form in block (IV) is not proportional to a root of $Q$),
we may act by $\mathrm{Stab}_{\mathrm{SL}(2)}(Q)$ to normalize that linear form to $x_3+x_4$.
Using the remaining diagonal torus (and projective rescaling), we normalize its coefficient to $1$.

Consequently, no continuous parameter remains, and a convenient normal form is
\[
\nf_3
= x_1x_3^2+x_1x_3x_4+x_1x_4^2+x_2^2x_5+x_0x_5^2+x_1x_2x_6+x_0x_3x_6+x_0x_4x_6.
\]

Finally, $\dim W^H=8$ and $\dim C_G(H)=8$, while $H\simeq\mathbb{G}_{m}$ acts trivially on $W^H$.
Thus the effective group dimension is $8-1=7$, and after projectivizing we obtain
\[
\dim(\Phi_3)=8-7-1=0.
\]

\subsection{Case $k=4$}

\paragraph{1-PS limit.}
\[
\lambda_{4}(t)
=
\mathrm{diag}\!\left(t^{3},\,t^{2},\,t,\,1,\,t^{-1},\,t^{-2},\,t^{-3}\right),
\quad
t\in\mathbb{G}_{m}.
\]
For a generic $f_{4}$ as in Section~3, the $1$-PS limit is
\[
\begin{aligned}
\phi_{4}
&:=\lim_{t\to 0}\lambda_{4}(t)\cdot f_{4}\\
&=
a_{1}\,x_{3}^{3}
+a_{2}\,x_{2}x_{3}x_{4}
+a_{3}\,x_{1}x_{4}^{2}
+a_{4}\,x_{2}^{2}x_{5}
+a_{5}\,x_{1}x_{3}x_{5}
+a_{6}\,x_{0}x_{4}x_{5}
+a_{7}\,x_{1}x_{2}x_{6}
+a_{8}\,x_{0}x_{3}x_{6}.
\end{aligned}
\]

\paragraph{$H$ and $C_{G}(H)$.}
Let $H=\lambda_{4}(\mathbb{G}_{m})$.
The diagonal weights on $\langle x_{0},\dots,x_{6}\rangle$ are pairwise distinct; hence
$C_{G}(H)=T$ (the maximal diagonal torus).
Each monomial of $\phi_{4}$ has $H$-weight $0$, so $\phi_{4}\in W^{H}$.

\paragraph{Polystability (Luna + convex-hull).}
By Luna's criterion, closedness of the $SL(7)$-orbit of $\phi_{4}$ is equivalent to
closedness of the $T$-orbit in $W^{H}$.
By the convex-hull criterion (Theorem~4.1), it suffices to check that $0$ is an interior
point of
\[
\mathrm{Conv}(\mathrm{Supp}(\phi_{4}))
\subset
X(T)_{\mathbb{R}}
\cong
\mathbb{R}^{7}/\mathbb{R}(1,\dots,1),
\]
which holds because the exponent vectors satisfy
\begin{equation}\label{case4}
\begin{aligned}
&(0,0,0,3,0,0,0)
+ (0,0,1,1,1,0,0)
+ (0,1,0,0,2,0,0)
+ (0,0,2,0,0,1,0)
\\
&\quad
+ 2(0,1,0,1,0,1,0)
+ 6(1,0,0,0,1,1,0)
+ 6(0,1,1,0,0,0,1)
\\
&\quad
+ 3(1,0,0,1,0,0,1)
= 9(1,1,1,1,1,1,1).
\end{aligned}
\end{equation}
Hence, $\phi_{4}$ is polystable and $SL(7)\cdot \phi_{4}$ is closed.

\paragraph{Normal form and component dimension.}
Since every monomial of $\phi_{4}$ has $H$-weight $0$, the subgroup $H\subset T$ acts \emph{trivially}
on $W^{H}$. Hence the $T$-action on $\mathbb{P}(W^{H})\cong\mathbb{P}^{7}$ factors through the
effective torus $T/H$, which has dimension $5$.

For a point of $\mathbb{P}(W^{H})$ with all eight coefficients nonzero, the stabilizer in $T/H$
is finite (equivalently, the eight exponent vectors have affine rank $5$), so a general
$(T/H)$-orbit has dimension $5$. Therefore the corresponding boundary component has dimension
\[
\dim \Phi_{4}=\dim \mathbb{P}(W^{H})-\dim(T/H)=7-5=2.
\]

Using the $5$ parameters of $T/H$ together with the overall projective scaling, we can normalize
\emph{six} of the eight coefficients to $1$, leaving two essential parameters. A convenient normal form is
\[
\nf_{4}(\alpha,\beta)
=
x_{3}^{3}
+x_{2}x_{3}x_{4}
+x_{1}x_{4}^{2}
+x_{2}^{2}x_{5}
+x_{1}x_{3}x_{5}
+x_{0}x_{4}x_{5}
+\alpha\,x_{1}x_{2}x_{6}
+\beta\,x_{0}x_{3}x_{6},
\qquad
(\alpha,\beta)\in(\mathbb{C}^{\times})^{2}.
\]
In particular, the corresponding component $\Phi_{4}$ of the moduli is two-dimensional.

\subsection{Case $k=5$}

\paragraph{1-PS limit.}
Set
\[
\lambda_{5}(t)=\operatorname{diag}\!\bigl(t^{4},\,t^{2},\,t,\,1,\,t^{-1},\,t^{-2},\,t^{-4}\bigr),\qquad t\in\mathbb{G}_{m}.
\]
For a generic $f_{5}$ as in Section~3, the $1$-PS limit is
\[
\phi_{5}:=\lim_{t\to 0}\lambda_{5}(t)\cdot f_{5}
= a_{1}x_{3}^{3}+a_{2}x_{2}x_{3}x_{4}+a_{3}x_{1}x_{4}^{2}+a_{4}x_{2}^{2}x_{5}
+a_{5}x_{1}x_{3}x_{5}+a_{6}x_{0}x_{5}^{2}+a_{7}x_{1}^{2}x_{6}+a_{8}x_{0}x_{3}x_{6}.
\]

\paragraph{$H$ and $C_{G}(H)$.}
Let $H=\lambda_{5}(\mathbb{G}_{m})$. The diagonal weights on $\langle x_{0},\ldots,x_{6}\rangle$ are pairwise distinct;
hence, $C_{G}(H)=T$ (the maximal diagonal torus). Each monomial of $\phi_{5}$ has $H$-weight $0$, so $\phi_{5}\in W^{H}$.

\paragraph{Polystability (Luna + convex-hull).}
By Luna's criterion, closedness of the $SL(7)$-orbit of $\phi_{5}$ is equivalent to closedness of the $T$-orbit
in the $H$-fixed subspace. By the convex-hull criterion (Theorem~4.1), it suffices to check that $0$ lies in the
interior of $\operatorname{Conv}(\operatorname{Supp}(\phi_{5}))\subset X(T)_{\mathbb{R}}\cong\mathbb{R}^{7}/\mathbb{R}(1,\ldots,1)$.
This holds because the exponent vectors of the eight monomials satisfy
\begin{equation}\label{eq:convex-phi5}
\begin{aligned}
& (0,0,0,3,0,0,0) + (0,0,1,1,1,0,0) \\
&\quad + 10(0,1,0,0,2,0,0) + 10(0,0,2,0,0,1,0) \\
&\quad + (0,1,0,1,0,1,0) + 5(1,0,0,0,0,2,0) \\
&\quad + 5(0,2,0,0,0,0,1) + 16(1,0,0,1,0,0,1) \\
&= 21(1,1,1,1,1,1,1),
\end{aligned}
\end{equation}
Hence, $\phi_{5}$ is polystable and $SL(7)\cdot\phi_{5}$ is closed.

\paragraph{Normal form and component dimension.}
Since every monomial of $\phi_{5}$ has $H$-weight $0$, the subgroup $H\subset T$ acts \emph{trivially}
on $W^{H}$. Hence the $T$-action on $\mathbb{P}(W^{H})\cong\mathbb{P}^{7}$ factors through the
effective torus $T/H$, which has dimension $5$.

For a point of $\mathbb{P}(W^{H})$ with all eight coefficients nonzero, the stabilizer in $T/H$
is finite (equivalently, the eight exponent vectors have affine rank $5$), so a general
$(T/H)$-orbit has dimension $5$. Therefore the corresponding boundary component has dimension
\[
\dim \Phi_{5}=\dim \mathbb{P}(W^{H})-\dim(T/H)=7-5=2.
\]

Using the $5$ parameters of $T/H$ together with the overall projective scaling, we can normalize
\emph{six} of the eight coefficients to $1$, leaving two essential parameters. A convenient normal form is
\[
\nf_{5}(\alpha,\beta)
=
x_{3}^{3}
+x_{2}x_{3}x_{4}
+x_{1}x_{4}^{2}
+x_{2}^{2}x_{5}
+x_{1}x_{3}x_{5}
+x_{0}x_{5}^{2}
+\alpha\,x_{1}^{2}x_{6}
+\beta\,x_{0}x_{3}x_{6},
\qquad
(\alpha,\beta)\in(\mathbb{C}^{\times})^{2}.
\]
In particular, the corresponding component $\Phi_{5}$ of the moduli is two-dimensional.

\subsection{Case $k=6$}

\paragraph{1-PS limit.}
Set
\[
\lambda_{6}(t)
=
\mathrm{diag}\!\left(t^{5},\,t^{3},\,t^{2},\,t,\,t^{-1},\,t^{-4},\,t^{-6}\right),
\quad
t\in\mathbb{G}_m .
\]
For a generic $f_{6}$ as in Section~3, the $1$-PS limit is
\[
\phi_{6}
:=
\lim_{t\to 0}\lambda_{6}(t)\cdot f_{6}
=
a_{1}\,x_{2}x_{4}^{2}
+a_{2}\,x_{2}^{2}x_{5}
+a_{3}\,x_{1}x_{3}x_{5}
+a_{4}\,x_{0}x_{4}x_{5}
+a_{5}\,x_{1}^{2}x_{6}
+a_{6}\,x_{0}x_{3}x_{6}.
\]

\paragraph{$H$ and $C_G(H)$.}
Let $H=\lambda_{6}(\mathbb{G}_m)$.
The diagonal weights on $\langle x_{0},\dots,x_{6}\rangle$ are pairwise distinct; hence
$C_G(H)=T$ (the maximal diagonal torus).
Each monomial of $\phi_{6}$ has $H$-weight $0$, so $\phi_{6}\in W^{H}$.

\paragraph{Polystability (Luna + convex-hull).}
Based on Luna's criterion, closedness of the $SL(7)$-orbit of $\phi_{6}$ is equivalent to
closedness of the $T$-orbit in the $H$-fixed subspace.
By Theorem~4.1, it suffices to check that $0$ is an interior point of
\[
\mathrm{Conv}(\mathrm{Supp}(\phi_{6}))
\subset
X(T)_{\mathbb{R}}
\cong
\mathbb{R}^{7}/\mathbb{R}(1,\dots,1),
\]
which holds because the exponent vectors of the six monomials of $\phi_{6}$ satisfy the
positive relation
\begin{equation}\label{eq:k6-hull}
\begin{aligned}
&(0,0,1,0,2,0,0)
+(0,0,2,0,0,1,0)
+(0,1,0,1,0,1,0)
\\
&\qquad
+(1,0,0,0,1,1,0)
+(0,2,0,0,0,0,1)
+2(1,0,0,1,0,0,1)
\\
&=3(1,1,1,1,1,1,1).
\end{aligned}
\end{equation}
Hence, $\phi_{6}$ is polystable and $SL(7)\cdot\phi_{6}$ is closed.

\paragraph{Normal form and component dimension.}
A diagonal scaling $\mathrm{diag}(\mu_{0},\cdots,\mu_{6}) \in T$ with $\Pi \mu_{i} = 1$, together with an overall scalar, acts on the six coefficients via the characters determined by the exponent vectors in \eqref{eq:k6-hull}.
Hence we may normalize all six coefficients to $1$ simultaneously. A normal form is therefore
\[
\nf_{6} = x_{2}x_{4}^{2}+x_{2}^{2}x_{5}+x_{1}x_{3}x_{5}+x_{0}x_{4}x_{5}+x_{1}^{2}x_{6}+x_{0}x_{3}x_{6}.
\]
The residual $T$-stabilizer is finite; hence, the corresponding component $\Phi_{6}$ of the moduli is zero-dimensional.

\subsection{Case $k=7$}

\paragraph{1-PS limit.}
Set
\[
\lambda_{7}(t)
=
\mathrm{diag}\!\left(t^{6},\,t^{4},\,t^{2},\,t,\,t^{-2},\,t^{-3},\,t^{-8}\right),
\qquad
t\in\mathbb{G}_m.
\]
For a generic $f_{7}$ as in Section~3, the $1$-PS limit is
\[
\phi_{7}
:=
\lim_{t\to0}\lambda_{7}(t)\cdot f_{7}
=
a_{1}\,x_{3}^{2}x_{4}
+a_{2}\,x_{1}x_{4}^{2}
+a_{3}\,x_{2}x_{3}x_{5}
+a_{4}\,x_{0}x_{5}^{2}
+a_{5}\,x_{1}^{2}x_{6}
+a_{6}\,x_{0}x_{2}x_{6}.
\]

\paragraph{$H$ and $C_{G}(H)$.}
Let $H=\lambda_{7}(\mathbb{G}_m)$.
The diagonal weights on $\langle x_{0},\ldots,x_{6}\rangle$ are pairwise distinct; hence
$C_{G}(H)=T$ (the maximal diagonal torus).
Each monomial of $\phi_{7}$ has $H$-weight $0$, so $\phi_{7}$ lies in the $H$-fixed subspace.

\paragraph{Polystability (Luna + convex-hull).}
By Luna's criterion, closedness of the $\mathrm{SL}(7)$-orbit of $\phi_{7}$ is equivalent to closedness
of the $T$-orbit in the $H$-fixed subspace.
By the convex-hull criterion (Theorem~4.1), it suffices to check that $0$ is an interior point of
\[
\mathrm{Conv}(\mathrm{Supp}(\phi_{7}))
\subset
X(T)_{\mathbb{R}}
\cong
\mathbb{R}^{7}/\mathbb{R}(1,\ldots,1),
\]
which holds because
\begin{equation}\label{case7}
\begin{aligned}
&(0,0,0,2,1,0,0)
+(0,1,0,0,2,0,0)
+(0,0,1,1,0,1,0)
\\
&\quad
+(1,0,0,0,0,2,0)
+(0,2,0,0,0,0,1)
+2(1,0,1,0,0,0,1)
\\
&=3(1,1,1,1,1,1,1),
\end{aligned}
\end{equation}
written in terms of the exponent vectors of the six monomials of $\phi_{7}$.
Hence, $\phi_{7}$ is polystable and $\mathrm{SL}(7)\cdot\phi_{7}$ is closed.

\paragraph{Normal form and component dimension.}
A diagonal scaling $\mathrm{diag}(\mu_{0},\ldots,\mu_{6})\in T$ with $\prod\mu_{i}=1$, together with an overall scalar,
acts on the six coefficients via the characters determined by the exponent vectors in \eqref{case7}.
Thus we may normalize all six coefficients to $1$ simultaneously.
Therefore, a normal form is
\[
\nf_{7}
=
x_{3}^{2}x_{4}
+ x_{1}x_{4}^{2}
+ x_{2}x_{3}x_{5}
+ x_{0}x_{5}^{2}
+ x_{1}^{2}x_{6}
+ x_{0}x_{2}x_{6}.
\]
The residual $T$-stabilizer is finite; hence, the corresponding component $\Phi_{7}$ of the moduli is zero-dimensional.

\subsection{Case $k=8$}

\paragraph{1-PS limit.}
Set
\[
\lambda_{8}(t)=\mathrm{diag}\!\left(t^{4},\,t,\,t,\,1,\,t^{-2},\,t^{-2},\,t^{-2}\right),\qquad t\in\mathbb{G}_m.
\]
For a generic $f_{8}$ as in Section~3, the $1$-PS limit is
\begin{align*}
\phi_8:=\lim_{t\to 0}\lambda_8(t)\cdot f_8
&=a_1 x_3^3
+a_2 x_1^2x_4+a_3 x_1x_2x_4+a_4 x_2^2x_4
+a_6 x_1^2x_5+a_7 x_1x_2x_5+a_8 x_2^2x_5\\
&\quad +a_{11}x_1^2x_6+a_{12}x_1x_2x_6+a_{13}x_2^2x_6
+a_5 x_0x_4^2+a_9 x_0x_4x_5+a_{14}x_0x_4x_6\\
&\quad +a_{10}x_0x_5^2+a_{15}x_0x_5x_6+a_{16}x_0x_6^2.
\end{align*}

\paragraph{$H$ and $C_G(H)$.}
Let $H=\lambda_8(\mathbb{G}_m)$.
The weights on $\langle x_0,\dots,x_6\rangle$ are $(4,1,1,0,-2,-2,-2)$ with multiplicities $(1,2,1,3)$; hence,
\[
C_G(H)
=\Bigl\{\mathrm{diag}(\alpha)\oplus A\oplus \mathrm{diag}(\beta)\oplus B:
\ \alpha,\beta\in\mathbb{G}_m,\ A\in \mathrm{GL}(2),\ B\in\mathrm{GL}(3),\
\alpha\beta\det(A)\det(B)=1\Bigr\}.
\]
Thus $C_G(H)\cong\bigl(\mathbb{G}_m\times \mathrm{GL}(2)\times \mathbb{G}_m\times \mathrm{GL}(3)\bigr)\cap \mathrm{SL}(7)$ and $\dim C_G(H)=14$.
Every monomial of $\phi_8$ has $H$-weight $0$; hence, $\phi_8\in W^H$.

\paragraph{Polystability (Luna + Casimiro--Florentino).}
Based on Luna's reduction, the closedness of the $\mathrm{SL}(7)$-orbit of $\phi_8$ is equivalent to polystability for the $C_G(H)$-action on $W^H$.
After conjugating inside the $\mathrm{GL}(2)$- and $\mathrm{GL}(3)$-blocks, any $\lambda\in Y(C_G(H))$ may be taken with
\[
\mathrm{wt}(x_0,\ldots,x_6)=(\alpha,\,s+u,\,s-u,\,\beta,\,\gamma+v_1,\,\gamma+v_2,\,\gamma+v_3),
\]
where $\alpha,\beta,s,u,\gamma,v_i\in\mathbb{Z}$, $v_1+v_2+v_3=0$, and
\[
S:=\alpha+\beta+2s+3\gamma=0
\]
is the $\mathrm{SL}$-constraint fixed in Convention~\ref{convention}.
Let $w(\cdot)$ be the weight of a monomial.
A direct computation yields the positive linear identity
\begin{align}
\sum_{j=4}^{6}\Bigl(w(x_1^2x_j)+w(x_2^2x_j)+2\,w(x_1x_2x_j)\Bigr)
+3 w(x_0x_4^2)+3 w(x_0x_5^2)+3 w(x_0x_6^2)\notag  \\
 +w(x_0x_4x_5)+w(x_0x_4x_6)+w(x_0x_5x_6)+4\,w(x_3^3)=12S.
\label{eq:k8-positive-identity}
\end{align}
If $\lambda\in\Lambda_{\phi_8}$, then all the above weights are $\ge 0$ and $S=0$; therefore, by
\eqref{eq:k8-positive-identity} they must all vanish.
Solving gives
\[
\beta=0,\quad v_1=v_2=v_3=0,\quad u=0,\quad 2s+\gamma=0,\quad \alpha+2\gamma=0.
\]
Putting $s=k\in\mathbb{Z}$, we obtain
\[
\mu_k(t)=\mathrm{diag}\!\left(t^{4k},\,t^{k},\,t^{k},\,1,\,t^{-2k},\,t^{-2k},\,t^{-2k}\right),\qquad
\Lambda_{\phi_8}=\{\mu_k\mid k\in\mathbb{Z}\},
\]
which is symmetric. Hence, by the Casimiro--Florentino criterion, $\phi_8$ is polystable.

\paragraph{Normal form and component dimension.}
On the $H$-fixed subspace
\[
W^{H}
=
\underbrace{\mathrm{Sym}^2\langle x_1,x_2\rangle \otimes\langle x_4,x_5,x_6\rangle}_{\mathrm{(I)}}
\ \oplus\ 
\underbrace{x_0\otimes \mathrm{Sym}^2\langle x_4,x_5,x_6\rangle}_{\mathrm{(II)}}
\ \oplus 
\underbrace{\langle x_3^3\rangle}_{\mathrm{(III)}},
\]
use $\mathrm{GL}(3)$ on $\langle x_4,x_5,x_6\rangle$ to diagonalize the quadratic, then the left/right
actions on $\mathrm{Sym}^2\langle x_1,x_2\rangle\otimes\langle x_4,x_5,x_6\rangle$ to diagonalize the $3\times 3$ block (SVD-type reduction under left $\mathrm{Sym}^2\mathrm{GL}(2)$ and right $\mathrm{O}(U,q)$, here $U= \langle x_{4},x_{5},x_{6} \rangle$ and $q=x_{4}^2+x_{5}^2+x_{6}^2$) and
normalize one diagonal entry to $1$; the remaining two appear as parameters $\rho,\sigma$.
A normal form is
\[
\nf_{8}(\rho,\sigma)
=
x_3^3+x_0x_4^2+x_0x_5^2+x_0x_6^2+x_1^2x_4+\rho\,x_1x_2x_5+\sigma\,x_2^2x_6,
\qquad (\rho,\sigma)\in(\mathbb{C}^{\times})^2.
\]
As $\dim W^{H}=16$ and the effective action has dimension $13$ (after projectivizing),
the closed component has dimension $16-13-1=2$.

\begin{remark}
Here $\mathrm{O}(U,q)$ denotes the orthogonal group of the quadratic space $(U,q)$, i.e.
$\mathrm{O}(U,q)=\{\,g\in\mathrm{GL}(U)\mid q(g u)=q(u)\ \forall u\in U\}$.
In the chosen basis $U=\langle x_4,x_5,x_6\rangle$ with $q=x_4^2+x_5^2+x_6^2$, this is
$\{\,B\in\mathrm{GL}_3\mid B^{\mathsf T}B=I_3\,\}$.
\end{remark}

\medskip
The residual $T$-stabilizer is finite; hence, the corresponding component $\Phi_{8}$ of the moduli is two-dimensional.

\subsection{Case $k=9$}

\paragraph{1-PS limit.}
Set
\[
\lambda_{9}(t)
=
\operatorname{diag}\!\bigl(t^{2},\,t^{2},\,1,\,1,\,t^{-1},\,t^{-1},\,t^{-2}\bigr),
\qquad
t\in\mathbb{G}_m.
\]
For a generic $f_{9}$ as in Section~3, the $1$-PS limit is
\[
\phi_{9}
:=
\lim_{t\to 0}\lambda_{9}(t)\cdot f_{9}
=
\begin{aligned}[t]
& a_{1}x_{2}^{3}+a_{2}x_{2}^{2}x_{3}+a_{3}x_{2}x_{3}^{2}+a_{4}x_{3}^{3} \\
& {}+ a_{5}x_{0}x_{4}^{2}+a_{6}x_{1}x_{4}^{2}+a_{7}x_{0}x_{4}x_{5}+a_{8}x_{1}x_{4}x_{5} \\
& {}+ a_{9}x_{0}x_{5}^{2}+a_{10}x_{1}x_{5}^{2} \\
& {}+ a_{11}x_{0}x_{2}x_{6}+a_{12}x_{1}x_{2}x_{6}+a_{13}x_{0}x_{3}x_{6}+a_{14}x_{1}x_{3}x_{6}.
\end{aligned}
\]
(All monomials have $H$-weight $0$.) 

\medskip
\paragraph{$H$ and $C_G(H)$.}
Let $H=\lambda_{9}(\mathbb{G}_m)$.
The $H$-weights on $\langle x_{0},\dots,x_{6}\rangle$ are $(2,2,0,0,-1,-1,-2)$ with block
decomposition $\langle x_{0},x_{1}\rangle$, $\langle x_{2},x_{3}\rangle$, $\langle x_{4},x_{5}\rangle$, $\langle x_{6}\rangle$.
Hence,
\[
C_G(H)
=
\Bigl\{
A\oplus B\oplus C\oplus \gamma
:\ 
A,B,C\in\mathrm{GL}(2),\ \gamma\in\mathbb{G}_m,\ 
\det(A)\det(B)\det(C)\gamma=1
\Bigr\}
\]
\[
\cong\ \bigl(\mathrm{GL}(2)^{3}\times\mathbb{G}_m\bigr)\cap \mathrm{SL}(7).
\]

\medskip
\paragraph{Polystability (Luna + Casimiro--Florentino).}
By Luna's reduction, the closedness of the $\mathrm{SL}(7)$-orbit of $\phi_{9}$
is equivalent to polystability for the $C_G(H)$-action on the $H$-fixed locus.
After conjugating inside each $\mathrm{GL}(2)$-block, any $\lambda\in Y\!\bigl(C_G(H)\bigr)$ may be taken with
\[
\mathrm{wt}(x_{0},\ldots,x_{6})
=
(a+u,\,a-u,\,b+v,\,b-v,\,c+w,\,c-w,\,d),
\qquad
S:=2a+2b+2c+d=0.
\]
A direct computation yields the positive linear identity
\begin{align}
&\bigl[w(x_{2}^{3})+w(x_{2}^{2}x_{3})+w(x_{2}x_{3}^{2})+w(x_{3}^{3})\bigr] \notag\\
&\quad + 2\bigl[w(x_{0}x_{4}^{2})+w(x_{0}x_{4}x_{5})+w(x_{0}x_{5}^{2})
+w(x_{1}x_{4}^{2})+w(x_{1}x_{4}x_{5})+w(x_{1}x_{5}^{2})\bigr] \notag\\
&\quad + 3\bigl[w(x_{0}x_{2}x_{6})+w(x_{1}x_{2}x_{6})+w(x_{0}x_{3}x_{6})+w(x_{1}x_{3}x_{6})\bigr]
= 12\,S. 
\end{align}
If $\lambda\in\Lambda_{\phi_{9}}$, then all the above weights are $\ge 0$ and $S=0$; hence, by $(9)$ they all vanish.
Solving gives
\[
b=v=u=w=0,
\qquad
a+2c=0,
\qquad
a+d=0.
\]
Writing $a=2k$ with $k\in\mathbb{Z}$ we obtain
\[
\mu_{k}(t)
:=
\operatorname{diag}\!\bigl(t^{2k},\,t^{2k},\,1,\,1,\,t^{-k},\,t^{-k},\,t^{-2k}\bigr),
\qquad
\Lambda_{\phi_{9}}
=
\{\mu_{k}\mid k\in\mathbb{Z}\}.
\]
Thus $\Lambda_{\phi_{9}}$ is symmetric, and by the Casimiro--Florentino criterion $\phi_{9}$ is polystable; in particular
$\mathrm{SL}(7)\cdot\phi_{9}$ is closed. 

\medskip
\paragraph{Normal form and component dimension.}

From now on, we normalize coefficients under the $C_G(H)$-action. Decompose
\[
W^{H}
=
\underbrace{\mathrm{Sym}^{3}\langle x_{2},x_{3}\rangle}_{(\mathrm{I})}
\ \oplus\ 
\underbrace{\langle x_{0},x_{1}\rangle\otimes \mathrm{Sym}^{2}\langle x_{4},x_{5}\rangle}_{(\mathrm{II})}
\ \oplus\ 
\underbrace{\langle x_{0},x_{1}\rangle\otimes\langle x_{2},x_{3}\rangle\otimes\langle x_{6}\rangle}_{(\mathrm{III})}.
\]

\paragraph{(I) Binary cubic $\mathrm{Sym}^{3}\langle x_{2},x_{3}\rangle$.}
A general element is $\GL(\langle x_{2},x_{3}\rangle)$-equivalent (up to overall scaling) to
\[
x_{2}^{2}x_{3}+\tau\,x_{2}x_{3}^{2},
\qquad
\tau\in\mathbb{C}^{\times},
\]
leaving one parameter $\tau$.

\smallskip
Indeed, over $\mathbb{C}$ any binary cubic $F\in \Sym^{3}\langle x_{2},x_{3}\rangle$ factors as
\[
F=L_{1}L_{2}L_{3},\qquad L_i\in \langle x_{2},x_{3}\rangle,
\]
and for a general $F$ the three linear factors (equivalently, the three roots on
$\PP^{1}=\PP(\langle x_{2},x_{3}\rangle)$) are distinct.  Since $\mathrm{PGL}(\langle x_{2},x_{3}\rangle)$
acts $3$--transitively on $\PP^{1}$, after a change of basis we may send two of the
roots to $[1\!:\!0]$ and $[0\!:\!1]$, i.e.\ arrange $L_{1}=x_{2}$ and $L_{2}=x_{3}$.  Thus
\[
F=x_{2}x_{3}(\alpha x_{2}+\beta x_{3}),\qquad \alpha,\beta\in\mathbb{C}^{\times}.
\]
Up to an overall scalar we may take $\alpha=1$ and set $\tau:=\beta/\alpha$, giving
\[
F\sim x_{2}^{2}x_{3}+\tau\,x_{2}x_{3}^{2}.
\]

\paragraph{(II) $\langle x_{0},x_{1}\rangle\otimes\mathrm{Sym}^{2}\langle x_{4},x_{5}\rangle$ ($2\times3$ block).}
Via the right $\GL(\langle x_{4},x_{5}\rangle)$ (through $\mathrm{Sym}^{2}$) diagonalize a reference quadratic
to $q=x_{4}^{2}+x_{5}^{2}$; the residual right group is $O(U,q)$ on $U=\langle x_{4},x_{5}\rangle$.
Using the left $\GL(\langle x_{0},x_{1}\rangle)$ together with this right orthogonal action
(an SVD-type reduction), eliminate the $x_{4}x_{5}$ cross term and equalize the $x_{4}^{2}$ entries.
After central torus/projective scalings,
\[
x_{0}(x_{4}^{2}+\rho\,x_{5}^{2}) + x_{1}(x_{4}^{2}+x_{5}^{2}),
\qquad
\rho\in\mathbb{C}^{\times}.
\]

\paragraph{(III) $\langle x_{0},x_{1}\rangle\otimes\langle x_{2},x_{3}\rangle\otimes \langle x_{6}\rangle$ ($2\times2$ block).}
With $\GL(\langle x_{0},x_{1}\rangle)$, $\GL(\langle x_{2},x_{3}\rangle)$ (respecting the choice in (I)), and scaling $x_{6}$,
we diagonalize to
\[
x_{0}x_{2}x_{6}+x_{1}x_{3}x_{6},
\]
and normalize the coefficients to $1$.

\medskip

Combining the three steps yields the normal form
\[
\nf_{9}(\tau,\rho)
=
x_{2}^{2}x_{3}
+\tau\,x_{2}x_{3}^{2}
+ x_{0}x_{4}^{2}
+\rho\,x_{0}x_{5}^{2}
+ x_{1}x_{4}^{2}
+ x_{1}x_{5}^{2}
+ x_{0}x_{2}x_{6}
+ x_{1}x_{3}x_{6},
\qquad
(\tau,\rho)\in(\mathbb{C}^{\times})^{2}.
\]
As $\dim W^{H}=14$ and $\dim C_G(H)=12$, the effective action has dimension $11$;
after projectivizing, we obtain $14-11-1=2$.
The residual $T$-stabilizer is finite; hence, the corresponding component $\Phi_{9}$ of the moduli is two-dimensional.

\subsection{Case $k=10$}

\paragraph{1-PS limit.}
Set
\[
  \lambda_{10}(t)
  = \operatorname{diag}\!\bigl(t^{2},\,t,\,1,\,1,\,t^{-1},\,t^{-1},\,t^{-1}\bigr),
  \qquad t\in\mathbb{G}_m .
\]
For a generic $f_{10}$ as in Section~3, the $1$-PS limit is
\[
\begin{aligned}
  \phi_{10}
  := \lim_{t\to0}\lambda_{10}(t)\cdot f_{10}
   &= a_{1}x_{2}^{3}+a_{2}x_{2}^{2}x_{3}+a_{3}x_{2}x_{3}^{2}+a_{4}x_{3}^{3} \\
   &\quad + a_{5}x_{1}x_{2}x_{4}+a_{6}x_{1}x_{3}x_{4}
          + a_{8}x_{1}x_{2}x_{5}+a_{9}x_{1}x_{3}x_{5} \\
   &\quad + a_{12}x_{1}x_{2}x_{6}+a_{13}x_{1}x_{3}x_{6} \\
   &\quad + a_{7}x_{0}x_{4}^{2}+a_{10}x_{0}x_{4}x_{5}+a_{11}x_{0}x_{5}^{2}
          + a_{14}x_{0}x_{4}x_{6}+a_{15}x_{0}x_{5}x_{6}+a_{16}x_{0}x_{6}^{2}.
\end{aligned}
\]

\paragraph{$H$ and $C_G(H)$.}
Let $H=\lambda_{10}(\mathbb{G}_m)$.
The $H$-weights on $\langle x_{0},\dots,x_{6}\rangle$ are $(2,1,0,0,-1,-1,-1)$ with block multiplicities $(1,1,2,3)$; hence,
\begin{align}
  C_G(H)
  &=\Bigl\{\ \operatorname{diag}(\alpha)\oplus \operatorname{diag}(\beta)\oplus A\oplus B
      :\ \alpha,\beta\in\mathbb{G}_m,\ A\in\mathrm{GL}(2),\ B\in\mathrm{GL}(3),\ \alpha\beta\det(A)\det(B)=1\ \Bigr\}
      \notag\\
  &\cong \bigl(\mathbb{G}_m\times\mathbb{G}_m\times \mathrm{GL}(2)\times \mathrm{GL}(3)\bigr)\cap \mathrm{SL}(7).
  \label{eq:k10-centralizer}
\end{align}
Every monomial of $\phi_{10}$ has $H$-weight $0$, so $\phi_{10}\in W^{H}$.

\paragraph{Polystability (Luna + Casimiro--Florentino).}
By Luna's reduction, the closedness of the $\mathrm{SL}(7)$-orbit of $\phi_{10}$
is equivalent to polystability for the $C_G(H)$-action on $W^{H}$.
After conjugating within the $\mathrm{GL}(2)$- and $\mathrm{GL}(3)$-blocks, any
$\lambda\in Y\!\bigl(C_G(H)\bigr)$ may be taken as
\[
  \mathrm{wt}(x_{0},\ldots,x_{6})
  =
  (\alpha,\,\beta,\,s+u,\,s-u,\,\gamma+v_{1},\,\gamma+v_{2},\,\gamma+v_{3}),
\]
where $\alpha,\beta,s,u,\gamma,v_i\in\mathbb{Z}$ with $v_{1}+v_{2}+v_{3}=0$, and by Convention~\ref{convention} the
$\mathrm{SL}$-constraint is
\[
  S:=\alpha+\beta+2s+3\gamma=0.
\]
A direct computation gives the positive linear identity
\begin{equation}\label{eq:k10-identity}
\begin{aligned}
 &\bigl[w(x_{2}^{3})+w(x_{2}^{2}x_{3})+w(x_{2}x_{3}^{2})+w(x_{3}^{3})\bigr]\\
 &\quad + 2\bigl[w(x_{0}x_{4}^{2})+w(x_{0}x_{4}x_{5})+w(x_{0}x_{5}^{2})+w(x_{0}x_{4}x_{6})+w(x_{0}x_{5}x_{6})+w(x_{0}x_{6}^{2})\bigr]\\
 &\quad + 2\bigl[w(x_{1}x_{2}x_{4})+w(x_{1}x_{3}x_{4})+w(x_{1}x_{2}x_{5})+w(x_{1}x_{3}x_{5})+w(x_{1}x_{2}x_{6})+w(x_{1}x_{3}x_{6})\bigr]
 \;=\; 12\,S.
\end{aligned}
\end{equation}
If $\lambda\in\Lambda_{\phi_{10}}$, then all the weights above are $\ge 0$ and $S=0$; hence, by
\eqref{eq:k10-identity}, they all vanish. Solving yields
\[
  \beta=-\gamma,\qquad s=0,\qquad u=0,\qquad v_{1}=v_{2}=v_{3}=0,\qquad \alpha=-2\gamma.
\]
Writing $\gamma=-k$ with $k\in\mathbb{Z}$ we obtain
\[
  \mu_{k}(t)
  :=
  \operatorname{diag}\!\bigl(t^{2k},\,t^{k},\,1,\,1,\,t^{-k},\,t^{-k},\,t^{-k}\bigr),
  \qquad
  \Lambda_{\phi_{10}}=\{\mu_{k}\mid k\in\mathbb{Z}\}.
\]
Thus $\Lambda_{\phi_{10}}$ is symmetric, and by the Casimiro--Florentino criterion, $\phi_{10}$ is polystable; in particular $\mathrm{SL}(7)\cdot\phi_{10}$ is closed.

\paragraph{Normal form and component dimension.}
Set
\[
U:=\langle x_{2},x_{3}\rangle,\qquad V:=\langle x_{4},x_{5},x_{6}\rangle.
\]
Then the $H$-fixed subspace decomposes as
\[
W^{H}
=
\underbrace{\Sym^{3}U}_{(\mathrm{I})}
\ \oplus\
\underbrace{x_{1}\otimes U\otimes V}_{(\mathrm{II})}
\ \oplus\
\underbrace{x_{0}\otimes \Sym^{2}V}_{(\mathrm{III})}.
\]
We normalize a \emph{general} element of $W^{H}$ under the $C_G(H)$-action, taking care that the
$\GL(U)$-freedom is shared by blocks (I) and (II).

\paragraph{(III) The quadratic block $x_{0}\otimes\Sym^{2}V$.}
For a general element, the quadratic form on $V$ is nondegenerate. Acting by $\GL(V)\subset C_G(H)$
and absorbing an overall scalar into the $x_{0}$-factor (using the central torus/projective scaling),
we may assume the quadratic block is
\[
x_{0}\,q,\qquad q=x_{4}x_{5}+x_{6}^{2}.
\]
After this normalization, the residual right group on $V$ is the orthogonal group
\[
\mathrm{O}(V,q)=\{\,B\in \GL(V)\mid q(Bv)=q(v)\,\}.
\]

\paragraph{(II) The mixed block $x_{1}\otimes U\otimes V$.}
Write the mixed term as $x_{1}m$ with $m\in U\otimes V$, and represent $m$ by a $2\times 3$ matrix
after choosing bases of $U$ and $V$.
For a general element, $\rank(m)=2$ and the restriction $q|_{\mathrm{Im}(m)}$ is nondegenerate.
Equivalently, pulling back $q$ along $m$ yields a nondegenerate binary quadratic form on $U$:
\[
q_{m}(u):=q(m(u))\in \Sym^{2}U^{\vee}.
\]
Using $\GL(U)$ we may put $q_{m}$ into split form. Then, by Witt's extension theorem, there exists
$B\in \mathrm{O}(V,q)$ sending the resulting hyperbolic pair in $\mathrm{Im}(m)$ to
$\langle x_{4},x_{5}\rangle$. Hence, after the $\GL(U)\times \mathrm{O}(V,q)$-action and scaling $x_{1}$,
we may assume
\[
x_{1}m = x_{1}(x_{2}x_{5}+x_{3}x_{4}).
\]
(If $\rank(m)=1$ or $q|_{\mathrm{Im}(m)}$ is degenerate, one lands in a proper closed subset; for the
generic closed orbit in this component we work on the open set above.)

\paragraph{Residual symmetry after (II) and (III).}
After fixing (II) and (III) as above, the stabilizer contains the one-dimensional torus
\[
T=\Bigl\{\ 
x_{2}\mapsto t\,x_{2},\ x_{3}\mapsto t^{-1}x_{3},\ 
x_{4}\mapsto t\,x_{4},\ x_{5}\mapsto t^{-1}x_{5},\ 
x_{6}\mapsto x_{6}
\ \Bigr\}\subset C_G(H),
\qquad t\in\mathbb{G}_m,
\]
which preserves both $x_{2}x_{5}+x_{3}x_{4}$ and $x_{4}x_{5}+x_{6}^{2}$.

\paragraph{(I) The cubic block $\Sym^{3}U$.}
A general element of $\Sym^{3}U$ is
\[
c_{0}x_{2}^{3}+c_{1}x_{2}^{2}x_{3}+c_{2}x_{2}x_{3}^{2}+c_{3}x_{3}^{3}.
\]
Under $T$, these monomials have weights $t^{3},t^{1},t^{-1},t^{-3}$, respectively.
On the dense open set where $c_{0}c_{3}\neq 0$, using the $T$-parameter and projective scaling we
normalize $c_{0}=c_{3}=1$. Writing $\tau:=c_{1}$ and $\rho:=c_{2}$, we obtain
\[
x_{2}^{3}+\tau\,x_{2}^{2}x_{3}+\rho\,x_{2}x_{3}^{2}+x_{3}^{3}.
\]

\medskip\noindent
Combining (I)--(III), we may take the closed-orbit representative in Case $k=10$ to be
\[
\nf_{10}(\tau,\rho)
=
x_{2}^{3}+\tau\,x_{2}^{2}x_{3}+\rho\,x_{2}x_{3}^{2}+x_{3}^{3}
+ x_{1}x_{2}x_{5}+x_{1}x_{3}x_{4}
+ x_{0}x_{4}x_{5}+x_{0}x_{6}^{2},
\qquad
(\tau,\rho)\in (\mathbb{C}^{\times})^{2}
\]
As $\dim W^{H}=16$ and $\dim C_G(H)=14$, the effective action has dimension $13$;
after projectivizing, we obtain $16-13-1=2$.
\emph{The residual $T$-stabilizer of a general representative is finite; hence, the corresponding
component $\Phi_{10}$ of the moduli is two-dimensional.}

\subsection{Case $k=11$}

\paragraph{1-PS limit.}
Set
\[
  \lambda_{11}(t)=\operatorname{diag}\!\bigl(t^{2},\,1,\,1,\,1,\,1,\,t^{-1},\,t^{-1}\bigr),
  \qquad t\in\mathbb{G}_m .
\]
For a generic $f_{11}$ as in Section~3, the $1$-PS limit is
\[
  \phi_{11}:=\lim_{t\to0}\lambda_{11}(t)\cdot f_{11}
  = F(x_{1},x_{2},x_{3},x_{4}) + x_{0}\,Q(x_{5},x_{6}),
\]
where $F\in \Sym^{3}\langle x_{1},x_{2},x_{3},x_{4}\rangle$ is a generic cubic form and $Q\in \Sym^{2}\langle x_{5},x_{6}\rangle$ is a generic quadratic form.

\paragraph{$H$ and $C_G(H)$.}
Let $H=\lambda_{11}(\mathbb{G}_m)$.
The $H$-weights on $\langle x_{0},\dots,x_{6}\rangle$ are $(2,0,0,0,0,-1,-1)$ with block decomposition
$\langle x_{0}\rangle\oplus\langle x_{1},x_{2},x_{3},x_{4}\rangle\oplus\langle x_{5},x_{6}\rangle$.
Hence,
\begin{align}
  C_G(H)
  &= \Bigl\{\ \operatorname{diag}(\alpha)\oplus A\oplus B \ :\ \alpha\in\mathbb{G}_m,\ A\in\mathrm{GL}(4),\ B\in\mathrm{GL}(2),\ \alpha\,\det(A)\det(B)=1\ \Bigr\}\notag\\
  &\cong \bigl(\mathbb{G}_m\times \mathrm{GL}(4)\times \mathrm{GL}(2)\bigr)\cap \mathrm{SL}(7).
  \label{eq:k11-centralizer}
\end{align}
Every monomial of $\phi_{11}$ has $H$-weight $0$, so $\phi_{11}\in W^{H}$.

\paragraph{Polystability (Luna + Casimiro--Florentino).}
By Luna's reduction, the closedness of the $\mathrm{SL}(7)$-orbit of $\phi_{11}$ is equivalent to polystability for the $C_G(H)$-action on $W^{H}$.
After conjugating inside the $\mathrm{GL}(4)$- and $\mathrm{GL}(2)$-blocks, any $\lambda\in Y\!\bigl(C_G(H)\bigr)$ may be taken with
\[
  \mathrm{wt}(x_{0},\ldots,x_{6})=(\alpha,\,s_{1},\,s_{2},\,s_{3},\,s_{4},\,t+u,\,t-u),
  \qquad
  S:=\alpha+(s_{1}+s_{2}+s_{3}+s_{4})+2t=0
\]
(Convention~\ref{convention}).
Summing the $\lambda$-weights of the $20$ cubic monomials in $F$ gives $15(s_{1}+s_{2}+s_{3}+s_{4})$, while the weights of the quadratic terms $x_0 Q$ sum to $3\alpha+6t$.
Using the relation $S=0$, we obtain the positive linear identity
\begin{equation}\label{eq:k11-identity}
  w(F) + 5\,w(x_0 Q) \;=\; 15(s_{1}+s_{2}+s_{3}+s_{4}) + 15(\alpha+2t) \;=\; 15\,S.
\end{equation}
If $\lambda\in\Lambda_{\phi_{11}}$, then all weights are $\ge 0$ and $S=0$; by \eqref{eq:k11-identity} they all vanish.
Solving this implies that $\Lambda_{\phi_{11}}$ is symmetric, and by the Casimiro--Florentino criterion, $\phi_{11}$ is polystable.

\paragraph{Normal form and component dimension.}
We normalize generic elements under the action of the centralizer $C_G(H)$.
The $H$-fixed subspace decomposes as
\[
W^{H}
=
\Sym^{3}\langle x_{1},x_{2},x_{3},x_{4}\rangle
\ \oplus\ 
x_{0}\otimes \Sym^{2}\langle x_{5},x_{6}\rangle.
\]

\noindent
(1) \textbf{Quadratic part.} Under the $\mathrm{GL}(2)$-action on $\langle x_5, x_6\rangle$, a generic binary quadratic is normalized to $x_5^2 + x_6^2$. The stabilizer of this form contains the orthogonal group $\mathrm{O}(2)$, contributing $1$ dimension to the stabilizer of the generic point.

\noindent
(2) \textbf{Cubic part.} Under the $\mathrm{GL}(4)$-action on $\langle x_1, x_2, x_3, x_4\rangle$, a generic cubic surface can be written as the sum of five cubes of linear forms (\textbf{Sylvester's Pentahedron Theorem}). Let $L_0, \dots, L_4$ be fixed general linear forms in $x_1, \dots, x_4$. Then the cubic part is $\sum_{i=0}^4 c_i L_i^3$.

Thus, a closed-orbit representative is:
\[
\nf_{11}(c) = \sum_{i=0}^4 c_i L_i(x_1, x_2, x_3, x_4)^3 + x_0(x_5^2 + x_6^2),
\]
where the parameters are $[c_0 : \dots : c_4] \in \mathbb{P}^4$.

The dimension of the component is computed as follows:
$\dim W^H = 20 (\text{cubic}) + 3 (\text{quadratic}) = 23$.
The effective group dimension is $\dim C_G(H) - 1 = 19$ (since $H$ acts trivially).
Since the generic stabilizer has dimension $1$ (due to $\mathrm{O}(2)$), the orbit dimension is $19 - 1 = 18$.
\[
\dim(\Phi_{11}) = 23 - 18 - 1 = 4.
\]
(Note: This case is structurally isomorphic to Case $k=21$.)

\subsection{Case $k=12$}

\paragraph{1-PS limit.}
Set
\[
  \lambda_{12}(t)=\operatorname{diag}\!\bigl(t^{3},\,t^{2},\,t,\,t,\,t^{-1},\,t^{-2},\,t^{-4}\bigr),
  \qquad t\in\mathbb{G}_m .
\]
For a generic $f_{12}$ as in Section~3, the $1$-PS limit is
\[
  \phi_{12}:=\lim_{t\to0}\lambda_{12}(t)\cdot f_{12}
  = a_{1}x_{1}x_{4}^{2}+a_{2}x_{2}^{2}x_{5}+a_{3}x_{2}x_{3}x_{5}+a_{4}x_{3}^{2}x_{5}
   +a_{5}x_{0}x_{4}x_{5}+a_{6}x_{1}^{2}x_{6}+a_{7}x_{0}x_{2}x_{6}+a_{8}x_{0}x_{3}x_{6}.
\]

\paragraph{$H$ and $C_G(H)$.}
Let $H=\lambda_{12}(\mathbb{G}_m)$.
The $H$-weights on $\langle x_{0},\dots,x_{6}\rangle$ are $(3,2,1,1,-1,-2,-4)$ with multiplicities $(1,1,2,1,1,1)$; hence,
\begin{align}
  C_G(H)
  &= \Bigl\{\ \operatorname{diag}(\alpha)\oplus \operatorname{diag}(\beta)\oplus A\oplus
      \operatorname{diag}(\gamma,\delta,\varepsilon)\ :\ \alpha,\beta,\gamma,\delta,\varepsilon\in\mathbb{G}_m,\ A\in\mathrm{GL}(2),\notag\\
  &\qquad \alpha\beta\det(A)\gamma\delta\varepsilon=1\ \Bigr\}
   \ \cong\ \bigl(\mathbb{G}_m^{5}\times\mathrm{GL}(2)\bigr)\cap \mathrm{SL}(7),
   \qquad \dim C_G(H)=8.
   \label{eq:k12-CG}
\end{align}
Every monomial of $\phi_{12}$ has $H$-weight $0$.

\paragraph{Polystability (Luna + Casimiro--Florentino).}
By Luna's reduction, the closedness of the $\mathrm{SL}(7)$-orbit of $\phi_{12}$ is equivalent to
polystability for the $C_G(H)$-action on $W^{H}$.
After conjugating inside the $\mathrm{GL}(2)$-block on $\langle x_{2},x_{3}\rangle$, any
$\lambda\in Y(C_G(H))$ may be taken with
\[
  \mathrm{wt}(x_{0},\ldots,x_{6})=(\alpha,\,\beta,\,s+u,\,s-u,\,\gamma,\,\delta,\,\varepsilon),
  \qquad S:=\alpha+\beta+2s+\gamma+\delta+\varepsilon=0.
\]
A direct computation yields the positive identity
\begin{align}
  &2 w(x_{1}x_{4}^{2})+w(x_{2}^{2}x_{5})+w(x_{2}x_{3}x_{5})+2 w(x_{3}^{2}x_{5})
   +2 w(x_{0}x_{4}x_{5})+2 w(x_{1}^{2}x_{6})\notag\\
  &\qquad +3 w(x_{0}x_{2}x_{6})+w(x_{0}x_{3}x_{6})=6S.
\label{eq:k12-identity}
\end{align}
If $\lambda\in \Lambda_{\phi_{12}}$, then all the weights on the left are $\ge0$ and $S=0$; by
\eqref{eq:k12-identity} they all vanish. Solving gives
\[
  u=0,\qquad \beta=-2\gamma,\qquad \delta=-2s,\qquad \varepsilon=-2\beta=4\gamma,\qquad
  \alpha=-3\gamma,\qquad s=-\gamma.
\]
Writing $k= - \gamma \in\mathbb{Z}$, we obtain
\[
  \mu_{k}(t)=\operatorname{diag}\!\bigl(t^{3k},\,t^{2k},\,t^{k},\,t^{k},\,t^{-k},\,t^{-2k},\,t^{-4k}\bigr),
  \qquad \Lambda_{\phi_{12}}=\{\mu_{k}\mid k\in\mathbb{Z}\}
\]
Thus $\Lambda_{\phi_{12}}$ is symmetric, and by the Casimiro--Florentino criterion $\phi_{12}$ is polystable; in particular, $\mathrm{SL}(7)\cdot\phi_{12}$ is closed.

\paragraph{Normal form and component dimension.}
Work on the $H$-fixed slice
\[
W^{H}
=
\underbrace{x_{1}\otimes \Sym^{2}\langle x_{4}\rangle}_{(\mathrm{I})}
\ \oplus\ 
\underbrace{\Sym^{2}\langle x_{2},x_{3}\rangle\otimes x_{5}}_{(\mathrm{II})}
\ \oplus\ 
\underbrace{x_{0}\otimes \langle x_{4}\rangle\otimes x_{5}}_{(\mathrm{III})}
\ \oplus\ 
\underbrace{\Sym^{2}\langle x_{1}\rangle\otimes x_{6}}_{(\mathrm{IV})}
\]
\[
\ \oplus\ 
\underbrace{x_{0}\otimes \langle x_{2},x_{3}\rangle\otimes x_{6}}_{(\mathrm{V})}.
\]

\paragraph{Diagonalize the binary quadratic in the $(\mathrm{II})$-block.}
Using $\GL(\langle x_{2},x_{3}\rangle)$, bring
$\Sym^{2}\langle x_{2},x_{3}\rangle\otimes x_{5}$ to $x_{2}^{2}x_{5} + x_{3}^{2}x_{5}$; the cross term
$x_{2}x_{3}x_{5}$ is eliminated.

\paragraph{Align the $(\mathrm{V})$-block.}
Within $x_{0}\otimes\langle x_{2},x_{3}\rangle\otimes x_{6}$, use the same $\GL(2)$ to align
this block to $x_{0}x_{2}x_{6}$ (so the $x_{0}x_{3}x_{6}$ entry vanishes).

\paragraph{Normalize coefficients by torus scalings and projective scaling in $(\mathrm{I})$, $(\mathrm{III})$, $(\mathrm{V})$.}
Use the $1$-dimensional tori on the $1$-dimensional weight spaces and the overall
projective scaling to set the remaining nonzero coefficients to $1$.

This yields the closed orbit normal form
\[
\nf_{12}
=
x_{1}x_{4}^{2}
+ x_{2}^{2}x_{5}
+ x_{3}^{2}x_{5}
+ x_{0}x_{4}x_{5}
+ x_{1}^{2}x_{6}
+ x_{0}x_{2}x_{6}.
\]
Finally, $\dim W^{H}=8$ and $\dim C_G(H)=8$; the effective action has dimension $7$
(with $H$ acting trivially). After projectivizing, we obtain $8-7-1=0$; hence, the
corresponding boundary component is zero-dimensional.

\subsection{Case $k=13$}

\paragraph{1-PS limit.}
Set
\[
  \lambda_{13}(t)=\operatorname{diag}\!\bigl(t^{2},\,t,\,t,\,1,\,t^{-1},\,t^{-1},\,t^{-2}\bigr),
  \qquad t\in\mathbb{G}_m .
\]
For a generic $f_{13}$ as in Section~3, the $1$-PS limit is
\[
\begin{aligned}
  \phi_{13}
  :=\lim_{t\to0}\lambda_{13}(t)\cdot f_{13}
   &= a_{1}x_{3}^{3}+a_{2}x_{1}x_{3}x_{4}+a_{3}x_{2}x_{3}x_{4}+a_{4}x_{0}x_{4}^{2}
    +a_{5}x_{1}x_{3}x_{5}+a_{6}x_{2}x_{3}x_{5}\\
   &\quad +a_{7}x_{0}x_{4}x_{5}+a_{8}x_{0}x_{5}^{2}
    +a_{9}x_{1}^{2}x_{6}+a_{10}x_{1}x_{2}x_{6}+a_{11}x_{2}^{2}x_{6}+a_{12}x_{0}x_{3}x_{6}.
\end{aligned}
\]
This is $H$-fixed for $H=\lambda_{13}(\mathbb{G}_m)$. 

\paragraph{$H$ and $C_G(H)$.}
The $H$-weights on $\langle x_{0},\dots,x_{6}\rangle$ are $(2,1,1,0,-1,-1,-2)$ with blocks
$\langle x_{0}\rangle\oplus\langle x_{1},x_{2}\rangle\oplus\langle x_{3}\rangle\oplus\langle x_{4},x_{5}\rangle\oplus\langle x_{6}\rangle$.
Hence,
\begin{align}
  C_G(H)
  &=\Bigl\{\ \operatorname{diag}(\alpha)\ \oplus\ A\ \oplus\ \operatorname{diag}(\beta)\ \oplus\ B\ \oplus\ \operatorname{diag}(\gamma)
     :\ \alpha,\beta,\gamma\in\mathbb{G}_m,\ A,B\in\mathrm{GL}(2),\notag\\
  &\qquad\qquad\qquad\qquad\qquad\alpha\,\det(A)\,\beta\,\det(B)\,\gamma=1\,\Bigr\} \notag\\
  &\cong \bigl(\mathbb{G}_m\times \mathrm{GL}(2)\times \mathbb{G}_m\times \mathrm{GL}(2)\times \mathbb{G}_m\bigr)\cap \mathrm{SL}(7),
  \qquad \dim C_G(H)=10. 
  \label{eq:k13-CG}
\end{align}
Each monomial of $\phi_{13}$ has $H$-weight $0$. 

\paragraph{Polystability (Luna + Casimiro--Florentino).}
By Luna's reduction, the closedness of the $\mathrm{SL}(7)$-orbit of $\phi_{13}$ is equivalent to polystability for the $C_G(H)$-action on $W^{H}$.
After conjugating within the two $\mathrm{GL}(2)$-blocks, any $\lambda\in Y(C_G(H))$ may be taken as
\[
  \mathrm{wt}(x_{0},\ldots,x_{6})=(\alpha,\,s+u,\,s-u,\,\beta,\,t+v,\,t-v,\,\gamma),
  \qquad
  S:=\alpha+2s+\beta+2t+\gamma=0\ \ (\text{Convention \ref{convention}}).
\] 
A direct computation yields the positive identity
\begin{align}
  &\bigl[w(x_{1}^{2}x_{6})+w(x_{2}^{2}x_{6})+2\,w(x_{1}x_{2}x_{6})\bigr]
   +\bigl[w(x_{0}x_{4}^{2})+2\,w(x_{0}x_{4}x_{5})+w(x_{0}x_{5}^{2})\bigr]\notag\\
  &\qquad +\bigl[w(x_{1}x_{3}x_{4})+w(x_{2}x_{3}x_{4})+w(x_{1}x_{3}x_{5})+w(x_{2}x_{3}x_{5})\bigr]
   + 2\,w(x_{0}x_{3}x_{6})=6S.
\label{eq:k13-identity}
\end{align}
If $\lambda\in\Lambda_{\phi_{13}}$, then the twelve weights on the left are $\ge 0$ and $S=0$; hence, they all vanish. Solving gives
\[
  u=0,\quad v=0,\quad \alpha+2t=0,\quad 2s+\gamma=0,\quad s+\beta+t=0,\quad \alpha+\beta+\gamma=0.
\]
Writing $s=k\in\mathbb{Z}$ yields
\[
  (\alpha,s,\beta,t,\gamma)=(2k,\,k,\,0,\,-k,\,-2k),
  \qquad
  \mu_{k}(t):=\operatorname{diag}\!\bigl(t^{2k},\,t^{k},\,t^{k},\,1,\,t^{-k},\,t^{-k},\,t^{-2k}\bigr),
\]
so $\Lambda_{\phi_{13}}=\{\mu_{k}\mid k\in\mathbb{Z}\}$ is symmetric; by the Casimiro--Florentino criterion $\phi_{13}$ is polystable; hence, $\mathrm{SL}(7)\cdot\phi_{13}$ is closed. 

\paragraph{Normal form and component dimension.}
We display
\[
\begin{aligned}
W^{H}
&=
\underbrace{\Sym^{3}\langle x_{3}\rangle}_{(\mathrm{I})}
\ \oplus\ 
\underbrace{x_{0}\otimes \Sym^{2}\langle x_{4},x_{5}\rangle}_{(\mathrm{II})}
\ \oplus\ 
\underbrace{\langle x_{1},x_{2}\rangle\otimes \langle x_{4},x_{5}\rangle\otimes \langle x_{3}\rangle}_{(\mathrm{III})}\\
&\ \ \ \oplus\ 
\underbrace{\Sym^{2}\langle x_{1},x_{2}\rangle\otimes \langle x_{6}\rangle}_{(\mathrm{IV})}
\ \oplus\ 
\underbrace{x_{0}\otimes \langle x_{3}\rangle\otimes x_{6}}_{(\mathrm{V})}.
\end{aligned}
\]
Normalize block by block under the $C_G(H)$-action:

\paragraph{(II) $x_{0}\otimes \Sym^{2}\langle x_{4},x_{5}\rangle$.}
Diagonalize the binary quadratic to
\[
x_{0}x_{4}^{2}+x_{0}x_{5}^{2}.
\]

\paragraph{(III) $\langle x_{1},x_{2}\rangle\otimes \langle x_{4},x_{5}\rangle\otimes \langle x_{3}\rangle$.}
View the four $x_{3}$-bilinear terms as a $2\times 2$ matrix on
$\langle x_{1},x_{2}\rangle\otimes\langle x_{4},x_{5}\rangle$ and bring it to diagonal form,
\[
x_{1}x_{3}x_{4}+\rho\,x_{2}x_{3}x_{5},
\qquad
\rho\in\mathbb{C}^\times.
\]

\paragraph{(IV) $\Sym^{2}\langle x_{1},x_{2}\rangle\otimes \langle x_{6}\rangle$.}
Diagonalize the symmetric $2\times 2$ form to
\[
x_{1}^{2}x_{6}+\sigma\,x_{2}^{2}x_{6},
\qquad
\sigma\in\mathbb{C}^\times.
\]

\paragraph{(I) and (V).}
Use the three torus factors on $x_{0},x_{3},x_{6}$ together with
projective scaling to normalize the remaining nonzero coefficients to $1$.

\medskip\noindent
This yields the convenient normal form
\[
\nf_{13}(\rho,\sigma)
=
x_{3}^{3}
+ x_{0}x_{4}^{2}
+ x_{0}x_{5}^{2}
+ x_{1}x_{3}x_{4}
+ \rho\,x_{2}x_{3}x_{5}
+ x_{1}^{2}x_{6}
+ \sigma\,x_{2}^{2}x_{6}
+ x_{0}x_{3}x_{6},
\qquad
(\rho,\sigma)\in(\mathbb{C}^\times)^{2}.
\]
Here, $\dim(W^{H})=12$ and $\dim C_G(H)=10$; as $H$ acts trivially, the effective group dimension is $9$. After projectivizing, we obtain $12-9-1=2$.
\emph{The residual $T$-stabilizer is finite; hence, the corresponding component $\Phi_{13}$ of the moduli is two-dimensional.}

\subsection{Case $k=14$}

\paragraph{1-PS limit.}
Set
\[
  \lambda_{14}(t)=\operatorname{diag}\!\bigl(t^{2},\,t^{2},\,1,\,t^{-1},\,t^{-1},\,t^{-1},\,t^{-1}\bigr),
  \qquad t\in\mathbb{G}_m.
\]
For a generic $f_{14}$ as in Section~3, the $1$-PS limit is
\[
\begin{aligned}
  \phi_{14}:=\lim_{t\to0}\lambda_{14}(t)\cdot f_{14}
   &= a_{1}x_{2}^{3}
    + a_{2}x_{0}x_{3}^{2}+a_{3}x_{1}x_{3}^{2}+a_{4}x_{0}x_{3}x_{4}+a_{5}x_{1}x_{3}x_{4}
    + a_{6}x_{0}x_{4}^{2}+a_{7}x_{1}x_{4}^{2}\\
   &\quad + a_{8}x_{0}x_{3}x_{5}+a_{9}x_{1}x_{3}x_{5}+a_{10}x_{0}x_{4}x_{5}+a_{11}x_{1}x_{4}x_{5}
    + a_{12}x_{0}x_{5}^{2}+a_{13}x_{1}x_{5}^{2}\\
   &\quad + a_{14}x_{0}x_{3}x_{6}+a_{15}x_{1}x_{3}x_{6}+a_{16}x_{0}x_{4}x_{6}+a_{17}x_{1}x_{4}x_{6}\\
   &\quad + a_{18}x_{0}x_{5}x_{6}+a_{19}x_{1}x_{5}x_{6}+a_{20}x_{0}x_{6}^{2}+a_{21}x_{1}x_{6}^{2}.
\end{aligned}
\]

\paragraph{$H$ and $C_G(H)$.}
Let $H=\lambda_{14}(\mathbb{G}_m)$.
The $H$-weights on $\langle x_{0},\dots,x_{6}\rangle$ are $(2,2,0,-1,-1,-1,-1)$ with block multiplicities $(2,1,4)$.
Hence,
\[
\begin{aligned}
  C_G(H)
  &=\Bigl\{\,A\oplus \beta\oplus B:\ A\in\mathrm{GL}(2),\ \beta\in\mathbb{G}_m,\ B\in\mathrm{GL}(4),\ \det(A)\,\beta\,\det(B)=1\,\Bigr\}\\
  &\cong \bigl(\mathrm{GL}(2)\times\mathbb{G}_m\times\mathrm{GL}(4)\bigr)\cap \mathrm{SL}(7),\qquad \dim C_G(H)=20.
\end{aligned}
\]
Every monomial of $\phi_{14}$ has $H$-weight $0$; hence, $\phi_{14}\in W^{H}$.

\paragraph{Polystability (Luna + Casimiro--Florentino).}
By Luna's reduction, the closedness of the $\mathrm{SL}(7)$-orbit of $\phi_{14}$ is equivalent to polystability for the $C_G(H)$-action on $W^{H}$.
After conjugating inside the $\mathrm{GL}(2)$- and $\mathrm{GL}(4)$-blocks, any $\lambda\in Y\!\bigl(C_G(H)\bigr)$ may be taken with
\[
  \mathrm{wt}(x_{0},\ldots,x_{6})=(a+u,\,a-u,\,b,\,c+v_{1},\,c+v_{2},\,c+v_{3},\,c+v_{4}),
\]
where $v_{1}+v_{2}+v_{3}+v_{4}=0$, and with the $\mathrm{SL}$-constraint (Convention~\ref{convention})
\[
  S:=2a+b+4c=0.
\]
A direct computation yields the positive identity
\begin{equation}\label{eq:k14-identity}
\begin{aligned}
 &10 w(x_2^3) + 3 \left[ \sum_{j=3}^6 \left( w(x_0 x_j^2) + w(x_1 x_j^2) \right) + \sum_{3 \le k < l \le 6} w(x_0 x_k x_l) + w(x_1 x_k x_l) \right] = 30 S
\end{aligned}
\end{equation}
If $\lambda\in\Lambda_{\phi_{14}}$, then all $21$ weights are $\ge 0$ and $S=0$; hence, by \eqref{eq:k14-identity} they all vanish.
Solving gives
\[
  b=0,\qquad u=0,\qquad v_{1}=v_{2}=v_{3}=v_{4}=0,\qquad a+2c=0.
\]
Writing $c=-k$ with $k\in\mathbb{Z}$, we obtain
\[
  \mu_{k}(t)=\operatorname{diag}\!\bigl(t^{2k},\,t^{2k},\,1,\,t^{-k},\,t^{-k},\,t^{-k},\,t^{-k}\bigr),\qquad
  \Lambda_{\phi_{14}}=\{\mu_{k}\mid k\in\mathbb{Z}\}.
\]
Thus $\Lambda_{\phi_{14}}$ is symmetric, and by the Casimiro--Florentino criterion $\phi_{14}$ is polystable; in particular $\mathrm{SL}(7)\cdot\phi_{14}$ is closed.

\paragraph{Normal form and component dimension.}
We display $W^{H}$ in block form:
\[
\begin{aligned}
W^{H}
&=
\underbrace{\Sym^{3}\langle x_{2}\rangle}_{(\mathrm{I})}
\ \oplus\
\underbrace{\langle x_{0}\rangle\otimes \Sym^{2}\langle x_{3},x_{4},x_{5},x_{6}\rangle}_{(\mathrm{II})}
\ \oplus\
\underbrace{\langle x_{1}\rangle\otimes \Sym^{2}\langle x_{3},x_{4},x_{5},x_{6}\rangle}_{(\mathrm{III})}.
\end{aligned}
\]
Equivalently, writing $U:=\langle x_{3},x_{4},x_{5},x_{6}\rangle$ and $V:=\langle x_{0},x_{1}\rangle$, any element of $W^{H}$ can be written as
\[
\phi \;=\; a\,x_{2}^{3} \;+\; x_{0}Q_{0}(x_{3},x_{4},x_{5},x_{6}) \;+\; x_{1}Q_{1}(x_{3},x_{4},x_{5},x_{6}),
\qquad Q_{0},Q_{1}\in \Sym^{2}U^{\vee}.
\]
Using the $\mathbb{G}_{m}$-factor on $x_{2}$ together with projective scaling, we normalize $a=1$.

\medskip\noindent
\emph{Diagonalization of the quadratic pencil.}
Consider the pencil $\langle Q_{0},Q_{1}\rangle$, i.e.\ the binary quartic
\[
\Delta(s,t):=\det(sQ_{0}+tQ_{1}),\qquad [s:t]\in\PP^{1}.
\]
On the dense open set where $\Delta$ has four distinct roots (equivalently, the pencil is \emph{regular}, i.e.\ it lies off the discriminant locus), we may assume $Q_{0}$ is nondegenerate.
Acting by $\GL(U)\cong\GL(4)$, we take
\[
Q_{0}\sim q:=x_{3}^{2}+x_{4}^{2}+x_{5}^{2}+x_{6}^{2}.
\]
With respect to $q$, the residual group is $O(U,q)$, and for a regular pencil the $q$-selfadjoint endomorphism
$A$ defined by $Q_{1}(u,v)=q(Au,v)$ has four distinct eigenvalues. Hence, after an $O(U,q)$-change of coordinates,
\[
Q_{1}\sim \lambda_{3}x_{3}^{2}+\lambda_{4}x_{4}^{2}+\lambda_{5}x_{5}^{2}+\lambda_{6}x_{6}^{2},
\qquad \lambda_{3},\lambda_{4},\lambda_{5},\lambda_{6}\ \text{pairwise distinct}.
\]

\medskip\noindent
\emph{Reduction to a one-parameter normal form (cross-ratio).}
The $\GL(V)\cong\GL(2)$-action on $(x_{0},x_{1})$ replaces $(Q_{0},Q_{1})$ by
\[
(Q_{0}',Q_{1}')=(\alpha Q_{0}+\beta Q_{1},\ \gamma Q_{0}+\delta Q_{1}),
\qquad
\begin{pmatrix}\alpha&\beta\\ \gamma&\delta\end{pmatrix}\in\GL(2),
\]
and (after re-normalizing $Q_{0}'$ back to $q$ by a diagonal rescaling in $U$) this transforms the eigenvalues by the
fractional-linear map
\[
\lambda\ \longmapsto\ \frac{\gamma+\delta\lambda}{\alpha+\beta\lambda}.
\]
Thus the regular pencil is determined, up to permutation of the diagonal coordinates in $U$,
by the unordered set $\{\lambda_{3},\lambda_{4},\lambda_{5},\lambda_{6}\}\subset \PP^{1}$ modulo $\mathrm{PGL}_{2}$,
i.e.\ by a single cross-ratio parameter.

To match the normal form displayed below, we normalize three eigenvalues to $1,-1,2$ and denote the fourth by $\tau$.
Using the $3$-transitivity of $\mathrm{PGL}_{2}$ on $\PP^{1}$ and then permuting $(x_{3},x_{4},x_{5},x_{6})$, we may assume
\[
(\lambda_{3},\lambda_{4},\lambda_{5},\lambda_{6})=(1,\tau,-1,2).
\]
Consequently (on the dense open set $\tau\notin\{1,-1,2\}$; and restricting further to $\tau\in\C^{\times}$ if we also require $Q_{1}$ nondegenerate),
a convenient normal form is
\[
\nf_{14}(\tau)
= x_{2}^{3}
+ x_{0}\bigl(x_{3}^{2}+x_{4}^{2}+x_{5}^{2}+x_{6}^{2}\bigr)
+ x_{1}\bigl(x_{3}^{2}+\tau x_{4}^{2}-x_{5}^{2}+2x_{6}^{2}\bigr),
\qquad
\tau\in\C^{\times}\setminus\{1,-1,2\}.
\]
(For the special values $\tau\in\{1,-1,2\}$ the pencil acquires repeated eigenvalues and the stabilizer jumps; these form a proper closed subset of the same one-dimensional family.)

As $\dim W^{H}=21$ and $\dim C_G(H)=20$, and since $H\simeq\mathbb{G}_m$ acts trivially on $W^{H}$, the effective action has
dimension $19$; after projectivizing we obtain $21-19-1=1$.
\emph{For a general parameter $\tau$ the residual stabilizer is finite; hence, the corresponding component $\Phi_{14}$ of the moduli is one-dimensional.}

\subsection{Case $k=15$}

\paragraph{1-PS limit.}
Set
\[
  \lambda_{15}(t)=\operatorname{diag}\!\bigl(t^{2},\,t,\,t,\,1,\,1,\,t^{-2},\,t^{-2}\bigr),
  \qquad t\in\mathbb{G}_m.
\]
For a generic $f_{15}$ as in Section~3, the $1$-PS limit is
\[
\begin{aligned}
  \phi_{15}:=\lim_{t\to0}\lambda_{15}(t)\cdot f_{15}
   &= a_{1}x_{3}^{3}+a_{2}x_{3}^{2}x_{4}+a_{3}x_{3}x_{4}^{2}+a_{4}x_{4}^{3}
    +a_{5}x_{1}^{2}x_{5}+a_{6}x_{1}x_{2}x_{5}+a_{7}x_{2}^{2}x_{5}\\
   &\quad +a_{8}x_{0}x_{3}x_{5}+a_{9}x_{0}x_{4}x_{5}
    +a_{10}x_{1}^{2}x_{6}+a_{11}x_{1}x_{2}x_{6}+a_{12}x_{2}^{2}x_{6}\\
   &\quad +a_{13}x_{0}x_{3}x_{6}+a_{14}x_{0}x_{4}x_{6}.
\end{aligned}
\]
All monomials have $H$-weight $0$, so $\phi_{15}\in W^{H}$ for $H=\lambda_{15}(\mathbb{G}_m)$.  

\paragraph{$H$ and $C_G(H)$.}
The $H$-weights on $\langle x_{0},\dots,x_{6}\rangle$ are $(2,1,1,0,0,-2,-2)$ with block decomposition
$\langle x_{0}\rangle\oplus\langle x_{1},x_{2}\rangle\oplus\langle x_{3},x_{4}\rangle\oplus\langle x_{5},x_{6}\rangle$.
Hence,
\[
\begin{aligned}
  C_G(H)
  &=\Bigl\{\ \operatorname{diag}(\alpha)\oplus A\oplus B\oplus C:\ \alpha\in\mathbb{G}_m,\ A,B,C\in\mathrm{GL}(2),\
      \alpha\,\det(A)\det(B)\det(C)=1\ \Bigr\}\\
  &\cong \bigl(\mathbb{G}_m\times \mathrm{GL}(2)\times \mathrm{GL}(2)\times \mathrm{GL}(2)\bigr)\cap \mathrm{SL}(7),
  \qquad \dim C_G(H)=12.
\end{aligned}
\]

\paragraph{Polystability (Luna + Casimiro--Florentino).}
By Luna's reduction, the closedness of the $\mathrm{SL}(7)$-orbit of $\phi_{15}$ is equivalent to polystability for the
$C_G(H)$-action on $W^{H}$.
After conjugating inside the three $\mathrm{GL}(2)$-blocks, any $\lambda\in Y\!\bigl(C_G(H)\bigr)$ may be taken with
\[
  \mathrm{wt}(x_{0},\ldots,x_{6})
  =
  (\alpha,\,s+u,\,s-u,\,t+v,\,t-v,\,\gamma+w,\,\gamma-w),
  \qquad
  S:=\alpha+2s+2t+2\gamma=0.
\]
A direct computation yields the positive identity
\begin{align}
  &2\bigl[w(x_{3}^{3})+w(x_{3}^{2}x_{4})+w(x_{3}x_{4}^{2})+w(x_{4}^{3})\bigr]
   +3\Bigl[\bigl(w(x_{1}^{2}x_{5})+2w(x_{1}x_{2}x_{5})+w(x_{2}^{2}x_{5})\bigr)\notag\\
  &\qquad\qquad\qquad\qquad\qquad\qquad
            +\bigl(w(x_{1}^{2}x_{6})+2w(x_{1}x_{2}x_{6})+w(x_{2}^{2}x_{6})\bigr)\Bigr]\notag\\
  &\qquad\qquad\qquad\qquad\qquad\qquad
   +6\bigl[w(x_{0}x_{3}x_{5})+w(x_{0}x_{4}x_{5})+w(x_{0}x_{3}x_{6})+w(x_{0}x_{4}x_{6})\bigr]
   \;=\;24\,S. \label{eq:k15-identity}
\end{align}
If $\lambda\in\Lambda_{\phi_{15}}$, then all $14$ weights above are $\ge 0$ and $S=0$; hence, by \eqref{eq:k15-identity} they all vanish.
Solving gives
\[
  u=0,\quad v=0,\quad w=0,\quad t=0,\quad \alpha+\gamma=0,\quad 2s+\gamma=0.
\]
Writing $s=k\in\mathbb{Z}$ we obtain
\[
  \mu_{k}(t):=\operatorname{diag}\!\bigl(t^{2k},\,t^{k},\,t^{k},\,1,\,1,\,t^{-2k},\,t^{-2k}\bigr),
  \qquad \Lambda_{\phi_{15}}=\{\mu_{k}\mid k\in\mathbb{Z}\}.
\]
Thus $\Lambda_{\phi_{15}}$ is symmetric; by the Casimiro--Florentino criterion $\phi_{15}$ is polystable, so $\mathrm{SL}(7)\cdot\phi_{15}$ is closed.

\paragraph{Normal form and component dimension.}
We have
\[
W^{H}
=
\underbrace{\Sym^{3}\langle x_{3},x_{4}\rangle}_{(\mathrm{I})}
\ \oplus\ 
\underbrace{\bigl(\Sym^{2}\langle x_{1},x_{2}\rangle\bigr)\otimes \langle x_{5},x_{6}\rangle}_{(\mathrm{II})}
\ \oplus\
\underbrace{x_{0}\otimes \langle x_{3},x_{4}\rangle\otimes \langle x_{5},x_{6}\rangle}_{(\mathrm{III})},
\]
of respective dimensions $4$, $6$, and $4$ (total $\dim W^{H}=14$). 

\paragraph{Reduction to normal form.}
We now normalize $\varphi_{15}$ under the action of $C_G(H)$ on $W^{H}$, using only:
(i) the left $\GL(2)$ on $\langle x_{3},x_{4}\rangle$,
(ii) the left $\Sym^{2}\GL(2)$ on $\Sym^{2}\langle x_{1},x_{2}\rangle$,
(iii) the right $\GL(2)$ on $\langle x_{5},x_{6}\rangle$,
(iv) diagonal tori (subject to $\det=1$) and projective rescaling.
We proceed block by block.

\medskip
\paragraph{(I) The binary cubic block $\Sym^{3}\langle x_{3},x_{4}\rangle$.}
A general binary cubic is $\GL(2)$-equivalent (after one overall scalar) to
\[
x_{3}^{2}x_{4}+\tau\,x_{3}x_{4}^{2},
\qquad
\tau\in\mathbb{C}^\times,
\]
which fixes the $\Sym^{3}$-part up to the single modulus $\tau$.

\medskip
\paragraph{(III) The bilinear $2\times 2$ block $x_{0}\otimes\langle x_{3},x_{4}\rangle\otimes \langle x_{5},x_{6}\rangle$.}
Write this part as $x_{0}\,(x_{3},x_{4})\,M\,(x_{5},x_{6})^{\!\top}$ with $M\in M_{2\times2}$. 
Using the left $\GL(2)$ action on $\langle x_{3},x_{4}\rangle$ and the right $\GL(2)$ action on $\langle x_{5},x_{6}\rangle$ (an SVD-type reduction), we bring $M$
to the identity; a diagonal torus and projective rescaling normalize the two coefficients to $1$:
\[
x_{0}x_{3}x_{5} + x_{0}x_{4}x_{6}.
\]

\medskip
\paragraph{(II) The $3\times 2$ block $\bigl(\Sym^{2}\langle x_{1},x_{2}\rangle\bigr)\otimes\langle x_{5},x_{6}\rangle$.}
Choose bases $\{x_{1}^{2},\ x_{1}x_{2},\ x_{2}^{2}\}$ and $\{x_{5},x_{6}\}$.
The left action of $\Sym^{2}\GL(2)$ on $\Sym^{2}\langle x_{1},x_{2}\rangle$ together with the right action of $\GL(2)$ on $\langle x_{5},x_{6}\rangle$
allows a simultaneous reduction that eliminates the mixed $x_{1}x_{2}$ row and diagonalizes the remaining two rows.
After using diagonal tori and an overall scale, we obtain
\[
x_{1}^{2}x_{5} + \rho\,x_{2}^{2}x_{6},
\qquad
\rho\in\mathbb{C}^\times.
\]

\medskip
\paragraph{Normal form.}
Hence, a closed-orbit representative is
\begin{equation}\label{eq:nf15}
\nf_{15}(\tau,\rho)
\;=\;
x_{3}^{2}x_{4} \;+\; \tau\,x_{3}x_{4}^{2}
\;+\; x_{0}x_{3}x_{5} \;+\; x_{0}x_{4}x_{6}
\;+\; x_{1}^{2}x_{5} \;+\; \rho\,x_{2}^{2}x_{6},
\qquad
(\tau,\rho)\in(\mathbb{C}^\times)^{2}.
\end{equation}
For general $(\tau,\rho)$ the residual stabilizer in $C_G(H)$ is finite, so the parameters
$(\tau,\rho)$ record genuine moduli on the closed stratum.

\subsection{Case $k=16$}

\paragraph{1-PS limit.}
Set
\[
  \lambda_{16}(t)=\operatorname{diag}\!\bigl(t^{2},\,t,\,1,\,1,\,1,\,t^{-1},\,t^{-2}\bigr),\qquad t\in\mathbb{G}_m .
\]
For a generic $f_{16}$ as in Section~3, the $1$-PS limit is
\[
\begin{aligned}
  \phi_{16}
  := \lim_{t\to0}\lambda_{16}(t)\cdot f_{16}
  &= a_{1}x_{2}^{3}+a_{2}x_{2}^{2}x_{3}+a_{3}x_{2}x_{3}^{2}+a_{4}x_{3}^{3}
   +a_{5}x_{2}^{2}x_{4}+a_{6}x_{2}x_{3}x_{4}+a_{7}x_{3}^{2}x_{4}\\
  &\quad +a_{8}x_{2}x_{4}^{2}+a_{9}x_{3}x_{4}^{2}+a_{10}x_{4}^{3}
   +a_{11}x_{1}x_{2}x_{5}+a_{12}x_{1}x_{3}x_{5}+a_{13}x_{1}x_{4}x_{5}\\
  &\quad +a_{14}x_{0}x_{5}^{2}+a_{15}x_{1}^{2}x_{6}+a_{16}x_{0}x_{2}x_{6}
   +a_{17}x_{0}x_{3}x_{6}+a_{18}x_{0}x_{4}x_{6}.
\end{aligned}
\]
(All monomials have $H$-weight $0$, so $\phi_{16}\in W^{H}$ for $H=\lambda_{16}(\mathbb{G}_m)$.)  

\paragraph{$H$ and $C_G(H)$.}
The $H$-weights on $\langle x_{0},\dots,x_{6}\rangle$ are $(2,1,0,0,0,-1,-2)$ with block decomposition
$\langle x_{0}\rangle\oplus\langle x_{1}\rangle\oplus\langle x_{2},x_{3},x_{4}\rangle\oplus\langle x_{5}\rangle\oplus\langle x_{6}\rangle$.
Hence,
\[
\begin{aligned}
  C_G(H)
  &=\Bigl\{\ \operatorname{diag}(\alpha)\oplus \operatorname{diag}(\beta)\oplus B\oplus \operatorname{diag}(\delta)\oplus \operatorname{diag}(\varepsilon)\ :\
     \alpha,\beta,\delta,\varepsilon\in\mathbb{G}_m,\ B\in\mathrm{GL}(3),\\
  &\qquad\qquad\alpha\,\beta\,\det(B)\,\delta\,\varepsilon=1\ \Bigr\} \\[2pt]
  &\cong\ \bigl(\mathbb{G}_m\times\mathbb{G}_m\times \mathrm{GL}(3)\times \mathbb{G}_m\times \mathbb{G}_m\bigr)\cap \mathrm{SL}(7),
  \qquad \dim C_G(H)=12.
\end{aligned}
\]  

\paragraph{Polystability (Luna + Casimiro--Florentino).}
By Luna's reduction, the closedness of the $\mathrm{SL}(7)$-orbit of $\phi_{16}$ is equivalent to polystability for the $C_G(H)$-action on $W^{H}$.
After conjugating within the $\mathrm{GL}(3)$-block on $\langle x_{2},x_{3},x_{4}\rangle$, any $\lambda\in Y\!\bigl(C_G(H)\bigr)$ may be taken with
\[
  \mathrm{wt}(x_{0},\ldots,x_{6})=(\alpha,\,\beta,\,c+v_{1},\,c+v_{2},\,c+v_{3},\,\delta,\,\varepsilon),
  \qquad v_{1}+v_{2}+v_{3}=0,
\]
and, by Convention~\ref{convention}, with the $\mathrm{SL}$-constraint
\[
  S:=\alpha+\beta+3c+\delta+\varepsilon=0.
\]  
Writing $w(\cdot)$ for the $\lambda$-weight of a monomial, a direct calculation yields the positive identity
\begin{equation}\label{eq:k16-identity}
\sum_{\substack{i+j+k=3\\ i,j,k\ge 0}} w(x_2^i x_3^j x_4^k)
\;+\;6\,w(x_0x_5^2)\;+\;6\,w(x_1^2x_6)
\;+\;2\sum_{i=2}^{4} w(x_0x_ix_6)
\;=\;12S.
\end{equation}
If $\lambda\in\Lambda_{\phi_{16}}$, then all $18$ weights on the left are $\ge0$ and $S=0$; hence, by \eqref{eq:k16-identity} they all vanish. From the ten cubics, we obtain
\[
  c=0,\qquad v_{1}=v_{2}=v_{3}=0,
\]
and from the remaining terms
\[
  \alpha+2\delta=0,\qquad 2\beta+\varepsilon=0,\qquad \alpha+\varepsilon=0.
\]
Thus $\alpha=2\beta,\ \delta=-\beta,\ \varepsilon=-2\beta$. Writing $\beta=k\in\mathbb{Z}$ gives
\[
  \mu_{k}(t)=\operatorname{diag}\!\bigl(t^{2k},\,t^{k},\,1,\,1,\,1,\,t^{-k},\,t^{-2k}\bigr),
  \qquad \Lambda_{\phi_{16}}=\{\mu_{k}\mid k\in\mathbb{Z}\}.
\]
As $\Lambda_{\phi_{16}}$ is symmetric, $\phi_{16}$ is polystable by the Casimiro--Florentino criterion; in particular, $\mathrm{SL}(7)\cdot\phi_{16}$ is closed.  

\paragraph{Normal form and component dimension.}

On the $H$--fixed slice we have the block decomposition
\[
W^{H}
=
\underbrace{\Sym^{3}\langle x_{2},x_{3},x_{4}\rangle}_{(\mathrm{I})}
\;\oplus\;
\underbrace{x_{1}\otimes\langle x_{2},x_{3},x_{4}\rangle\otimes x_{5}}_{(\mathrm{II})}
\;\oplus\;
\underbrace{\langle x_{0}x_{5}^{2}\rangle}_{(\mathrm{III})}
\;\oplus\;
\underbrace{\langle x_{1}^{2}x_{6}\rangle}_{(\mathrm{IV})}
\;\oplus\;
\underbrace{x_{0}\otimes\langle x_{2},x_{3},x_{4}\rangle\otimes x_{6}}_{(\mathrm{V})}.
\]

The centralizer is
\[
CG(H)\simeq (\mathbb{G}_{m})^{4}\times \GL\!\left(\langle x_{2},x_{3},x_{4}\rangle\right),
\]
acting blockwise (subject to the determinant-one condition).  
We normalize the blocks one by one.

\medskip
\textbf{Step 1: Normalization of the rank-one blocks $(\mathrm{II})$ and $(\mathrm{V})$.}

The blocks
\[
(\mathrm{II})=\{x_{1}x_{2}x_{5},\, x_{1}x_{3}x_{5},\, x_{1}x_{4}x_{5}\},\qquad
(\mathrm{V})=\{x_{0}x_{2}x_{6},\, x_{0}x_{3}x_{6},\, x_{0}x_{4}x_{6}\}
\]
correspond to two linear functionals on
\[
U^{\vee}=\langle x_{2},x_{3},x_{4}\rangle^{\vee}.
\]
For a general member, these two $3$-vectors are linearly independent; hence there exists a single 
$g\in\GL(U)$ sending them to the coordinate covectors:
\[
(\mathrm{II})\;\sim\; x_{1}x_{2}x_{5},
\qquad
(\mathrm{V})\;\sim\; x_{0}x_{3}x_{6}.
\]
Using the diagonal tori on $(x_{1},x_{5})$ and $(x_{0},x_{6})$ (and projective scaling), we normalize the coefficients to $1$.

\medskip
\textbf{Step 2: Normalization of the one-dimensional blocks $(\mathrm{III})$ and $(\mathrm{IV})$.}

The $\mathbb{G}_{m}$-factors on $x_{0}$ and $x_{1}$ allow us to set
\[
(\mathrm{III})\sim x_{0}x_{5}^{2},
\qquad
(\mathrm{IV})\sim x_{1}^{2}x_{6}.
\]

\medskip
\textbf{Step 3: Normalization of the ternary cubic in $(\mathrm{I})$.}

Let
\[
F(x_{2},x_{3},x_{4})\in \Sym^{3}\langle x_{2},x_{3},x_{4}\rangle.
\]
After Step~1, the residual subgroup of $\GL(\langle x_{2},x_{3},x_{4}\rangle)$ preserving $(\mathrm{II})$ and $(\mathrm{V})$ consists of
\[
x_{2}\mapsto s\,x_{2},\qquad
x_{3}\mapsto t\,x_{3},\qquad
x_{4}\mapsto u\,x_{4}+\alpha x_{2}+\beta x_{3},
\qquad
(s,t,u)\in(\mathbb{G}_{m})^{3},\;(\alpha,\beta)\in\mathbb{A}^{2}.
\]

\textit{(a) Elimination of $x_{2}x_{4}^{2}$ and $x_{3}x_{4}^{2}$.}  
The substitution $x_{4}\mapsto x_{4}+\alpha x_{2}+\beta x_{3}$ yields
\[
(x_{4}+\alpha x_{2}+\beta x_{3})^{3}
=
x_{4}^{3}+3\alpha\,x_{2}x_{4}^{2}+3\beta\,x_{3}x_{4}^{2}+\cdots,
\]
so choosing $(\alpha,\beta)$ suitably (with the $x_{4}^{3}$ coefficient nonzero), we kill both coefficients.

\textit{(b) Normalization of pure cubes}.  
Scaling by $(s,t,u)$ (and compensating using the diagonal tori on $(x_{1},x_{5})$ and $(x_{0},x_{6})$ so that the coefficients fixed in Steps~1--2 remain equal to $1$), we may assume
\[
F=x_{2}^{3}+x_{3}^{3}+\sigma x_{4}^{3}+\cdots,
\qquad
\sigma\in\mathbb{C}^{\times}.
\]

\textit{(c) Remaining mixed terms}.  
After (a) and (b), the possible mixed terms are
\[
x_{2}^{2}x_{3},\; x_{2}x_{3}^{2},\; x_{2}^{2}x_{4},\; x_{3}^{2}x_{4},\; x_{2}x_{3}x_{4}.
\]
Note that the residual subgroup preserving the normalizations in Step~1 does \emph{not} contain the shear
$x_{3}\mapsto x_{3}+\gamma x_{2}$ (it would reintroduce an $x_{0}x_{2}x_{6}$ term in the $(\mathrm{V})$-block). Hence the coefficient of $x_{2}x_{3}^{2}$ cannot be eliminated in general. Therefore we write
\[
F(x_{2},x_{3},x_{4})
=
x_{2}^{3}+x_{3}^{3}+\sigma x_{4}^{3}
+\rho\,x_{2}x_{3}x_{4}
+\kappa\,x_{2}^{2}x_{3}
+\nu\,x_{2}x_{3}^{2}
+\mu\,x_{3}^{2}x_{4}
+\lambda\,x_{2}^{2}x_{4},
\]
with $\rho,\kappa,\nu,\mu,\lambda\in\mathbb{C}^{\times}$.

\medskip
\textbf{Collecting all blocks.}

Hence a closed-orbit representative is
\[
\begin{aligned}
\nf_{16}
&=
x_{2}^{3}+x_{3}^{3}+\sigma x_{4}^{3}
+\rho\,x_{2}x_{3}x_{4}
+\kappa\,x_{2}^{2}x_{3}
+\nu\,x_{2}x_{3}^{2}
+\mu\,x_{3}^{2}x_{4}
+\lambda\,x_{2}^{2}x_{4}
\\
&\qquad
+x_{1}x_{2}x_{5}
+x_{0}x_{3}x_{6}
+x_{0}x_{5}^{2}
+x_{1}^{2}x_{6},
\end{aligned}
\]
with six free parameters
\[
(\sigma,\rho,\kappa,\nu,\mu,\lambda)\in(\mathbb{C}^{\times})^{6}.
\]

\paragraph{Component dimension.}

We compute
\[
\dim W^{H}
=
10+3+1+1+3 = 18.
\]
Since $C_{G}(H)$ has dimension $12$ and the $H\simeq\mathbb{G}_{m}$ factor acts trivially on $W^{H}$, the effective dimension is
\[
\dim_{\mathrm{eff}}C_{G}(H)=11.
\]
Projectivization subtracts one more dimension, giving
\[
\dim \Phi_{16}
=
18 - 11 - 1 = 6,
\]
agreeing with the six independent parameters of $\nf_{16}$.

\subsection{Case $k=17$}

\paragraph{1-PS limit.}
Set
\[
  \lambda_{17}(t)=\mathrm{diag}\!\bigl(t,\,t,\,t,\,1,\,1,\,t^{-1},\,t^{-2}\bigr),
  \qquad t\in\mathbb{G}_m.
\]
For a generic $f_{17}$ as in Section~3, the $1$-PS limit is
\[
\begin{aligned}
  \phi_{17}:=\lim_{t\to 0}\lambda_{17}(t)\cdot f_{17}
  &= a_{1}\,x_{3}^{3}+a_{2}\,x_{3}^{2}x_{4}+a_{3}\,x_{3}x_{4}^{2}+a_{4}\,x_{4}^{3}\\
  &\quad +a_{5}\,x_{0}x_{3}x_{5}+a_{6}\,x_{1}x_{3}x_{5}+a_{7}\,x_{2}x_{3}x_{5}\\
  &\quad +a_{8}\,x_{0}x_{4}x_{5}+a_{9}\,x_{1}x_{4}x_{5}+a_{10}\,x_{2}x_{4}x_{5}\\
  &\quad +a_{11}\,x_{0}^{2}x_{6}+a_{12}\,x_{0}x_{1}x_{6}+a_{13}\,x_{1}^{2}x_{6}\\
  &\quad +a_{14}\,x_{0}x_{2}x_{6}+a_{15}\,x_{1}x_{2}x_{6}+a_{16}\,x_{2}^{2}x_{6}.
\end{aligned}
\]

\paragraph{$H$ and $C_{G}(H)$.}
Let $H=\lambda_{17}(\mathbb{G}_m)$. The $H$-weights on
$\langle x_{0},\dots,x_{6}\rangle$ are $(1,1,1,0,0,-1,-2)$ with block
multiplicities $\langle x_{0},x_{1},x_{2}\rangle$, $\langle x_{3},x_{4}\rangle$,
$\langle x_{5}\rangle$, $\langle x_{6}\rangle$. Thus
\[
\begin{aligned}
  C_{G}(H)
  &=\Bigl\{\,A\oplus B\oplus\mathrm{diag}(\gamma)\oplus\mathrm{diag}(\delta):\
    A\in\mathrm{GL}(3),\ B\in\mathrm{GL}(2),\ \gamma,\delta\in\mathbb{G}_m,\\
  &\hspace{7.2em}\det(A)\det(B)\gamma\delta=1\,\Bigr\}\\
  &\cong \bigl(\mathrm{GL}(3)\times\mathrm{GL}(2)\times\mathbb{G}_m\times\mathbb{G}_m\bigr)\cap \mathrm{SL}(7).
\end{aligned}
\]
Each monomial of $\phi_{17}$ has $H$-weight $0$.
Moreover, $\dim C_G(H)=14$.

\paragraph{Polystability (Luna + Casimiro--Florentino).}
By Luna's reduction, the closedness of the $\mathrm{SL}(7)$-orbit of $\phi_{17}$
is equivalent to polystability for the $C_{G}(H)$-action on the $H$-fixed
subspace. After conjugating inside the $\mathrm{GL}(3)$- and $\mathrm{GL}(2)$-blocks,
any $\lambda\in Y\!\bigl(C_{G}(H)\bigr)$ may be taken with
\[
  \mathrm{wt}(x_{0},\dots,x_{6})
  =(a+u_{1},\,a+u_{2},\,a+u_{3},\,b+v,\,b-v,\,c,\,d),
  \quad u_{1}+u_{2}+u_{3}=0,
\]
and the $\mathrm{SL}(7)$-condition (Convention~\ref{convention})
\[
  S:=3a+2b+c+d=0.
\]
A direct calculation yields the positive linear identity
\begin{align}
  &\bigl[w(x_{3}^{3})+w(x_{3}^{2}x_{4})+w(x_{3}x_{4}^{2})+w(x_{4}^{3})\bigr]\notag\\
  &\quad + 2\bigl[w(x_{0}x_{3}x_{5})+w(x_{1}x_{3}x_{5})+w(x_{2}x_{3}x_{5})
                 +w(x_{0}x_{4}x_{5})+w(x_{1}x_{4}x_{5})+w(x_{2}x_{4}x_{5})\bigr]\notag\\
  &\quad + 2\bigl[w(x_{0}^{2}x_{6})+w(x_{0}x_{1}x_{6})+w(x_{1}^{2}x_{6})
                 +w(x_{0}x_{2}x_{6})+w(x_{1}x_{2}x_{6})+w(x_{2}^{2}x_{6})\bigr]\notag\\
  &\qquad = 12\,S.\label{eq:k17-identity}
\end{align}
If $\lambda\in\Lambda_{\phi_{17}}$, then all $16$ monomial weights are $\ge0$ and
$S=0$; by \eqref{eq:k17-identity} they must all vanish. Solving gives
\[
  b=0,\quad v=0,\quad u_{1}=u_{2}=u_{3}=0,\quad c=-a,\quad d=-2a.
\]
Writing $a=k$ ($k\in\mathbb{Z}$) yields
\[
  \Lambda_{\phi_{17}}=\{\mu_{k}\mid k\in\mathbb{Z}\},\qquad
  \mu_{k}(t)=\mathrm{diag}\!\bigl(t^{k},\,t^{k},\,t^{k},\,1,\,1,\,t^{-k},\,t^{-2k}\bigr).
\]
Thus, $\Lambda_{\phi_{17}}$ is symmetric, and by the Casimiro--Florentino criterion,
$\phi_{17}$ is polystable; in particular, $\mathrm{SL}(7)\cdot\phi_{17}$ is closed.

\paragraph{Normal form and component dimension.}
On the $H$-fixed slice
\[
W^{H}
=
\underbrace{\mathrm{Sym}^{3}\langle x_{3},x_{4}\rangle}_{(\mathrm{I})}
\ \oplus\ 
\underbrace{\langle x_{0},x_{1},x_{2}\rangle\otimes \langle x_{3},x_{4}\rangle\otimes x_{5}}_{(\mathrm{II})}
\ \oplus\ 
\underbrace{\mathrm{Sym}^{2}\langle x_{0},x_{1},x_{2}\rangle\otimes x_{6}}_{(\mathrm{III})},
\]
of total dimension $4+6+6=16$.

\smallskip
\paragraph{$(\mathrm{III})$ The quadratic block $\mathrm{Sym}^{2}\langle x_{0},x_{1},x_{2}\rangle\otimes x_{6}$.}
By the $\mathrm{GL}_{3}$-action, one diagonalizes the ternary quadratic, and using diagonal
tori together with projective rescaling, the coefficients of $x_{0}^{2}x_{6}$ and $x_{1}^{2}x_{6}$
are normalized to $1$, leaving
\[
x_{0}^{2}x_{6} \;+\; x_{1}^{2}x_{6} \;+\; \rho\,x_{2}^{2}x_{6},
\qquad
\rho\in\mathbb{C}^{\times}.
\]
Importantly, after the reduction of $(\mathrm{II})$ below, the subgroup of $\mathrm{GL}_{3}$ that
preserves $(\mathrm{II})$ is block-diagonal on $\langle x_{0},x_{1}\rangle\oplus\langle x_{2}\rangle$ (up to an
$\mathrm{O}(2)$ on $\langle x_{0},x_{1}\rangle$ and an independent scaling on $x_{2}$). Hence, the relative
scale of $x_{2}$ cannot be absorbed, and the parameter $\rho$ cannot be removed.

\smallskip
\paragraph{$(\mathrm{II})$ The $3\times2$ block $\langle x_{0},x_{1},x_{2}\rangle\otimes \langle x_{3},x_{4}\rangle\otimes x_{5}$.}
Using the left $\mathrm{GL}_{3}$ and right $\mathrm{GL}_{2}$ actions (an SVD-type reduction), a
generic rank-$2$ element is brought to diagonal form; scaling $x_{5}$ then fixes the two
nonzero entries to $1$:
\[
x_{0}x_{3}x_{5} \;+\; x_{1}x_{4}x_{5}.
\]

\smallskip
\paragraph{$(\mathrm{I})$ The binary cubic $\mathrm{Sym}^{3}\langle x_{3},x_{4}\rangle$.}
Acting by $\mathrm{GL}_{2}$ on $\langle x_{3},x_{4}\rangle$ puts a general binary cubic into a two-term
form (sending three roots to $\{0,\infty,-\tau\}$), and a residual diagonal scaling fixes the
first coefficient to $1$:
\[
x_{3}^{2}x_{4} \;+\; \tau\,x_{3}x_{4}^{2},\qquad \tau\in\mathbb{C}^{\times}.
\]

\smallskip
Collecting $(\mathrm{I})$-$(\mathrm{III})$, we obtain the closed-orbit representative
\[
\nf_{17}(\tau,\rho)
=
x_{3}^{2}x_{4}+\tau\,x_{3}x_{4}^{2}
+x_{0}x_{3}x_{5}
+x_{1}x_{4}x_{5}
+x_{0}^{2}x_{6}
+x_{1}^{2}x_{6}
+\rho\,x_{2}^{2}x_{6},
\qquad
(\tau,\rho)\in(\mathbb{C}^{\times})^{2}.
\]

Lastly, $\dim W^{H}=16$ and $\dim C_G(H)=14$. As the $H\simeq\mathbb{G}_m$-factor
acts trivially on $W^{H}$, the effective group dimension is $14-1=13$. After
projectivizing, we get
\[
\dim(\text{component})=\dim W^{H}-13-1=2,
\]
so $(\tau,\rho)$ are the two genuine moduli of the closed stratum.
Hence, the corresponding component $\Phi_{17}$ of the moduli is two-dimensional.

\subsection{Case $k=18$}

\paragraph{1-PS limit.}
Set
\[
  \lambda_{18}(t)=\mathrm{diag}\!\bigl(t,\,t,\,1,\,1,\,1,\,t^{-1},\,t^{-1}\bigr),
  \qquad t\in\mathbb{G}_m.
\]
For a generic $f_{18}$ as in Section~3, the $1$-PS limit is
\[
\begin{aligned}
  \phi_{18}:=\lim_{t\to 0}\lambda_{18}(t)\cdot f_{18}
  &= a_{1}\,x_{2}^{3}+a_{2}\,x_{2}^{2}x_{3}+a_{3}\,x_{2}x_{3}^{2}+a_{4}\,x_{3}^{3}
   +a_{5}\,x_{2}^{2}x_{4}+a_{6}\,x_{2}x_{3}x_{4}+a_{7}\,x_{3}^{2}x_{4}\\
  &\quad +a_{8}\,x_{2}x_{4}^{2}+a_{9}\,x_{3}x_{4}^{2}+a_{10}\,x_{4}^{3}
   +a_{11}\,x_{0}x_{2}x_{5}+a_{12}\,x_{1}x_{2}x_{5}+a_{13}\,x_{0}x_{3}x_{5}\\
  &\quad +a_{14}\,x_{1}x_{3}x_{5}+a_{15}\,x_{0}x_{4}x_{5}+a_{16}\,x_{1}x_{4}x_{5}
   +a_{17}\,x_{0}x_{2}x_{6}+a_{18}\,x_{1}x_{2}x_{6}\\
  &\quad +a_{19}\,x_{0}x_{3}x_{6}+a_{20}\,x_{1}x_{3}x_{6}+a_{21}\,x_{0}x_{4}x_{6}
   +a_{22}\,x_{1}x_{4}x_{6}.
\end{aligned}
\]

\paragraph{$H$ and $C_{G}(H)$.}
Let $H=\lambda_{18}(\mathbb{G}_m)$. The $H$-weights on $\langle x_{0},\dots,x_{6}\rangle$ are  
$(1,1,0,0,0,-1,-1)$ with block decomposition
\[
\langle x_{0},x_{1}\rangle\oplus\langle x_{2},x_{3},x_{4}\rangle\oplus\langle x_{5},x_{6}\rangle .
\]
Hence
\[
\begin{aligned}
  C_{G}(H)
  &= \Bigl\{\,A\oplus B\oplus C:\ A\in\mathrm{GL}(2),\ B\in\mathrm{GL}(3),\ C\in\mathrm{GL}(2),\\
  &\hspace{7.5em}\det(A)\det(B)\det(C)=1\,\Bigr\}\\
  &\cong \bigl(\mathrm{GL}(2)\times\mathrm{GL}(3)\times\mathrm{GL}(2)\bigr)\cap \mathrm{SL}(7),
\end{aligned}
\]
so $\dim C_{G}(H)=16$.  
Each monomial of $\phi_{18}$ has $H$-weight $0$; hence $\phi_{18}\in W^{H}$.

\paragraph{Polystability (Luna + Casimiro--Florentino).}
By Luna's reduction, the closedness of the $\mathrm{SL}(7)$-orbit of $\phi_{18}$ is equivalent to
polystability for the $C_{G}(H)$-action on the $H$-fixed subspace.
After conjugating within the three blocks, any $\lambda\in Y\!\bigl(C_{G}(H)\bigr)$ may be
taken with
\[
  \mathrm{wt}(x_{0},\dots,x_{6})
  =(a+u,\,a-u,\,b+v_{1},\,b+v_{2},\,b+v_{3},\,c+w,\,c-w),
\]
where $a,b,c,u,w,v_{i}\in\mathbb{Z}$ and $v_{1}+v_{2}+v_{3}=0$, and with the $\mathrm{SL}$-constraint
\[
  S:=2a+3b+2c=0 .
\]

Let $w(\cdot)$ denote the $\lambda$-weight of a monomial. Then
\[
  \sum_{\text{all 10 cubics in }x_{2},x_{3},x_{4}} w \;=\; 30\,b,
\]
and
\[
  \sum_{i=2}^{4}[w(x_{0}x_{i}x_{5})+w(x_{1}x_{i}x_{5})]=6(a+b+c+w),
\qquad
  \sum_{i=2}^{4}[w(x_{0}x_{i}x_{6})+w(x_{1}x_{i}x_{6})]=6(a+b+c-w).
\]

Hence the positive identity
\begin{align}
  &\Bigl[\sum_{\text{all 10 cubics in }x_{2},x_{3},x_{4}} w\Bigr]
   + 5\sum_{i=2}^{4}\bigl[w(x_{0}x_{i}x_{5})+w(x_{1}x_{i}x_{5})\bigr] \notag\\
  &\quad
   + 5\sum_{i=2}^{4}\bigl[w(x_{0}x_{i}x_{6})+w(x_{1}x_{i}x_{6})\bigr]
   \;=\; 30\,S. \label{eq:k18-identity}
\end{align}
If $\lambda\in\Lambda_{\phi_{18}}$, then all $22$ monomial weights are $\ge 0$ and $S=0$; hence, by
\eqref{eq:k18-identity} they must all vanish. Solving yields
\[
  b=0,\quad v_{1}=v_{2}=v_{3}=0,\quad u=0,\quad w=0,\quad a+c=0.
\]
Writing $a=k$ ($k\in\mathbb{Z}$) gives
\[
  \Lambda_{\phi_{18}}=\{\mu_{k}\mid k\in\mathbb{Z}\},\qquad
  \mu_{k}(t)=\mathrm{diag}\!\bigl(t^{k},\,t^{k},\,1,\,1,\,1,\,t^{-k},\,t^{-k}\bigr).
\]
Thus $\Lambda_{\phi_{18}}$ is symmetric, and by the Casimiro--Florentino criterion,
$\phi_{18}$ is polystable.

\paragraph{Normal form and component dimension.}
Put
\[
V:=\langle x_0,x_1\rangle,\qquad
U:=\langle x_2,x_3,x_4\rangle,\qquad
W:=\langle x_5,x_6\rangle.
\]
Then the $H$--fixed slice decomposes as
\[
W^H
=
\Sym^3 U
\;\oplus\;
(V\otimes U\otimes x_5)
\;\oplus\;
(V\otimes U\otimes x_6)
\;\cong\;
\Sym^3 U \;\oplus\; (V\otimes U\otimes W),
\]
and $C_G(H)=\{A\oplus B\oplus C:\ A\in GL(V),\ B\in GL(U),\ C\in GL(W),\ \det(A)\det(B)\det(C)=1\}$
acts blockwise (with the \emph{same} $B\in GL(U)$ acting simultaneously on the cubic block and on the mixed block).

\smallskip
\noindent
To avoid any ambiguity about this shared $GL(U)$--action, we normalize in two stages:
first we fix a convenient basis of $U$ using the cubic block, and \emph{then} we reduce the mixed block using only those $U$--changes that preserve the chosen cubic shape (namely permutations and diagonal scalings of $x_2,x_3,x_4$).

\smallskip
\noindent\textbf{(I) The ternary cubic block (fixing the $U$--basis).}
Write the cubic part as $F\in\Sym^3 U$.
On the dense open subset where $F$ is a smooth plane cubic with nonvanishing Hessian,
the classical Hesse normal form gives a change of basis $B\in GL(U)$ and a scalar
such that (after rescaling and renaming the coordinates on $U$) we may assume
\begin{equation}\label{eq:k18-hesse}
F
\;\sim\;
x_2^3+\sigma_1 x_3^3+\sigma_2 x_4^3+\rho\,x_2x_3x_4,
\qquad
(\sigma_1,\sigma_2,\rho)\in(\C^\times)^3.
\end{equation}
From this point on, the basis $(x_2,x_3,x_4)$ is \emph{fixed}.
The remaining $GL(U)$--freedom that preserves the $4$--term shape in \eqref{eq:k18-hesse}
is the monomial subgroup: permutations of $(x_2,x_3,x_4)$ and diagonal scalings
$x_i\mapsto t_i x_i$ (which only rescale $(\sigma_1,\sigma_2,\rho)$).

\smallskip
\noindent\textbf{(II)+(III) The mixed block as a $2\times 3$ matrix pencil (reduced relative to the fixed $U$--basis).}
Write the mixed part as
\[
x_5\,L_5 \;+\; x_6\,L_6,\qquad
L_5,L_6\in V\otimes U,
\]
and represent $L_5,L_6$ by $2\times 3$ matrices $M_5,M_6$ with respect to the fixed bases
$(x_0,x_1)$ of $V$ and $(x_2,x_3,x_4)$ of $U$:
\[
L_j=(x_0,x_1)\,M_j\,(x_2,x_3,x_4)^{\mathsf T}\qquad (j=5,6).
\]
The $GL(V)$--factor acts by left multiplication on both $M_5,M_6$,
the $GL(W)$--factor changes the basis of the pencil (so it replaces $(M_5,M_6)$ by another spanning pair),
and the residual monomial subgroup in $GL(U)$ permutes/rescales the three columns.

We work on the dense open set where the pencil $\langle M_5,M_6\rangle$ contains a rank-$2$
matrix and where suitable $2\times 2$ minors in the fixed column basis are nonzero (an open condition).
Using $GL(W)$ we choose the basis $(x_5,x_6)$ so that $M_5$ has rank $2$.
After possibly permuting $(x_2,x_3,x_4)$ (allowed by the monomial subgroup), we may assume
the $2\times 2$ minor of $M_5$ in the $(x_2,x_3)$--columns is invertible.
Then, multiplying on the left by its inverse (an element of $GL(V)$), we achieve
\[
M_5
\;\sim\;
\begin{pmatrix}
1&0&\alpha\\
0&1&0
\end{pmatrix},
\qquad
\alpha\in\C^\times,
\]
where the remaining entry in the $(x_4)$--column is recorded as the modulus $\alpha$
(we do \emph{not} use column additions, only the allowed row operations, column permutation/diagonal scaling, and basis change in $W$).

Next, keeping $M_5$ fixed, we use the subgroup of $GL(W)$ stabilizing the first basis vector $x_5$
(i.e.\ upper--triangular matrices) to replace $M_6$ by $M_6+\lambda M_5$, and use $GL(V)$--row operations
that preserve the pivot normalization of $M_5$.
On the same open set one can arrange that $M_6$ takes the form
\[
M_6
\;\sim\;
\begin{pmatrix}
1&0&0\\
0&\beta&\gamma
\end{pmatrix},
\qquad
(\beta,\gamma)\in(\C^\times)^2,
\]
after additionally allowing diagonal rescalings of $(x_2,x_3,x_4)$ (which remain inside the $4$--term family
\eqref{eq:k18-hesse} and merely rescale $(\sigma_1,\sigma_2,\rho)$).

Translating back to monomials, the mixed block becomes
\[
x_0x_2x_5+x_1x_3x_5+\alpha\,x_0x_4x_5
\;+\;
x_0x_2x_6+\beta\,x_1x_3x_6+\gamma\,x_1x_4x_6,
\qquad
(\alpha,\beta,\gamma)\in(\C^\times)^3.
\]

\smallskip
\noindent\textbf{Collecting (I)--(III).}
Combining \eqref{eq:k18-hesse} with the above reduction of the mixed block, we obtain the convenient normal form
\[
\nf_{18}
=
x_2^3+\sigma_1x_3^3+\sigma_2x_4^3+\rho\,x_2x_3x_4
+x_0x_2x_5+x_1x_3x_5+\alpha\,x_0x_4x_5
+x_0x_2x_6+\beta\,x_1x_3x_6+\gamma\,x_1x_4x_6,
\]
with $(\sigma_1,\sigma_2,\rho,\alpha,\beta,\gamma)\in(\C^\times)^6$. The corresponding component
$\Phi_{18}$ of the moduli is six-dimensional.

\subsection{Case $k=19$}

\paragraph{1-PS limit.}
Set
\[
  \lambda_{19}(t)=\mathrm{diag}\!\bigl(t^{2},\,t^{2},\,t^{2},\,1,\,t^{-1},\,t^{-1},\,t^{-4}\bigr),
  \qquad t\in\mathbb{G}_m.
\]
For a generic $f_{19}$ as in Section~3, the $1$-PS limit is
\[
\begin{aligned}
  \phi_{19}:=\lim_{t\to 0}\lambda_{19}(t)\cdot f_{19}
  &= a_{1}\,x_{3}^{3}
   + a_{2}\,x_{0}x_{4}^{2}+a_{3}\,x_{1}x_{4}^{2}+a_{4}\,x_{2}x_{4}^{2}\\
  &\quad + a_{5}\,x_{0}x_{4}x_{5}+a_{6}\,x_{1}x_{4}x_{5}+a_{7}\,x_{2}x_{4}x_{5}\\
  &\quad + a_{8}\,x_{0}x_{5}^{2}+a_{9}\,x_{1}x_{5}^{2}+a_{10}\,x_{2}x_{5}^{2}\\
  &\quad + a_{11}\,x_{0}^{2}x_{6}+a_{12}\,x_{0}x_{1}x_{6}+a_{13}\,x_{1}^{2}x_{6}\\
  &\quad + a_{14}\,x_{0}x_{2}x_{6}+a_{15}\,x_{1}x_{2}x_{6}+a_{16}\,x_{2}^{2}x_{6}.
\end{aligned}
\]

\paragraph{$H$ and $C_{G}(H)$.}
Let $H=\lambda_{19}(\mathbb{G}_m)$. The $H$-weights on
$\langle x_{0},\dots,x_{6}\rangle$ are $(2,2,2,0,-1,-1,-4)$ with block decomposition
\[
  \langle x_{0},x_{1},x_{2}\rangle\ \oplus\ \langle x_{3}\rangle\ \oplus\ \langle x_{4},x_{5}\rangle\ \oplus\ \langle x_{6}\rangle.
\]
Hence,
\[
\begin{aligned}
  C_{G}(H)
  &= \Bigl\{\,A\oplus \mathrm{diag}(\beta)\oplus B\oplus \mathrm{diag}(\delta):\
       A\in\mathrm{GL}(3),\ B\in\mathrm{GL}(2),\\
  &\hspace{9.8em}\beta,\delta\in\mathbb{G}_m,\ \det(A)\,\beta\,\det(B)\,\delta=1\,\Bigr\}\\
  &\cong \bigl(\mathrm{GL}(3)\times\mathrm{GL}(2)\times\mathbb{G}_m\times\mathbb{G}_m\bigr)\cap \mathrm{SL}(7),
\end{aligned}
\]
so $\dim C_{G}(H)=14$. Each monomial of $\phi_{19}$ has $H$-weight $0$.

\paragraph{Polystability (Luna + Casimiro--Florentino).}
By Luna's reduction, the closedness of the $\mathrm{SL}(7)$-orbit of $\phi_{19}$ is
equivalent to polystability for the $C_{G}(H)$-action on the $H$-fixed subspace.
After conjugating inside the blocks, any $\lambda\in Y\!\bigl(C_{G}(H)\bigr)$ may be taken with
\[
  \mathrm{wt}(x_{0},\dots,x_{6})=(s+u_{1},\,s+u_{2},\,s+u_{3},\,b,\,t+v,\,t-v,\,c),
\]
where $s,t,b,c,u_{i},v\in\mathbb{Z}$, $u_{1}+u_{2}+u_{3}=0$, and the $\mathrm{SL}(7)$-condition
(Convention~\ref{convention}) is
\[
  S:=3s+b+2t+c=0.
\]
A direct computation yields the positive identity
\begin{align}
  &\sum_{i=0}^{2}\Bigl[w\bigl(x_{i}x_{4}^{2}\bigr)
      + 2\,w\bigl(x_{i}x_{4}x_{5}\bigr)
      + w\bigl(x_{i}x_{5}^{2}\bigr)\Bigr]\notag\\
  &\quad + 2\Bigl[w(x_{0}^{2}x_{6})+w(x_{0}x_{1}x_{6})+w(x_{1}^{2}x_{6})
                   +w(x_{0}x_{2}x_{6})+w(x_{1}x_{2}x_{6})+w(x_{2}^{2}x_{6})\Bigr]\notag\\
  &\quad + 4\,w(x_{3}^{3}) \;=\; 12\,S. \label{eq:k19-identity}
\end{align}
If $\lambda\in\Lambda_{\phi_{19}}$, then all $16$ monomial weights are $\ge 0$ and $S=0$; by
\eqref{eq:k19-identity} they must all vanish. Solving gives
\[
  b=0,\quad v=0,\quad s+u_{r}+2t=0\ (r=1,2,3),\quad 2s+c=0.
\]
Using $u_{1}+u_{2}+u_{3}=0$ we obtain $s+2t=0$ and $u_{1}=u_{2}=u_{3}=0$.
Writing $s=2k$ with $k\in\mathbb{Z}$ yields
\[
  \Lambda_{\phi_{19}}=\{\mu_{k}\mid k\in\mathbb{Z}\},\qquad
  \mu_{k}(t)=\mathrm{diag}\!\bigl(t^{2k},\,t^{2k},\,t^{2k},\,1,\,t^{-k},\,t^{-k},\,t^{-4k}\bigr).
\]
Thus $\Lambda_{\phi_{19}}$ is symmetric, and by the Casimiro--Florentino criterion
$\phi_{19}$ is polystable; in particular $\mathrm{SL}(7)\cdot\phi_{19}$ is closed.

\paragraph{Normal form and component dimension.}
In the $H$-fixed subspace
\[
  W^{H}=\ \langle x_{3}^{3}\rangle\
     \oplus\ \langle x_{0},x_{1},x_{2}\rangle\otimes\mathrm{Sym}^{2}\langle x_{4},x_{5}\rangle\
     \oplus\ \mathrm{Sym}^{2}\langle x_{0},x_{1},x_{2}\rangle\otimes x_{6},
\]
using the $\mathrm{GL}(3)$-action on $\langle x_{0},x_{1},x_{2}\rangle$ to identify the basis of $\langle x_{0},x_{1},x_{2}\rangle$ with the standard basis $\{x_{4}^{2}, x_{4}x_{5}, x_{5}^{2}\}$ of $\mathrm{Sym}^{2}\langle x_{4},x_{5}\rangle$ (ensuring full rank), and using the residual stabilizer (isomorphic to $\mathrm{GL}(2)$) to normalize the third block, a generic element is brought to
\[
  \nf_{19}(\rho,\sigma)
  = x_{3}^{3} + (x_{0}x_{4}^{2}+x_{1}x_{4}x_{5}+x_{2}x_{5}^{2})
    + x_{6}(x_{0}^{2}+x_{2}^{2}+\rho\,x_{1}^{2}+\sigma\,x_{0}x_{2}),
\]
with $(\rho,\sigma)\in {\mathbb{C}^{\times}}^{2}$ general. As $\dim W^{H}=16$ and
$\dim C_{G}(H)=14$ (with $H$ acting trivially), the effective action has
dimension $13$; after projectivizing we obtain
\[
  \dim \Phi_{19} = 16-13-1=2.
\]
The residual $T$-stabilizer is finite; hence, the corresponding component
$\Phi_{19}$ of the moduli is two-dimensional.

\subsection{Case $k=20$}

\paragraph{1-PS limit.}
Set
\[
  \lambda_{20}(t)=\mathrm{diag}\!\bigl(t,\,t,\,t,\,t,\,1,\,t^{-2},\,t^{-2}\bigr),\quad t\in\mathbb{G}_m.
\]
For a generic $f_{20}$ as in Section~3, the $1$-PS limit is
\[
\begin{aligned}
\phi_{20}:=\lim_{t\to 0}\lambda_{20}(t)\cdot f_{20}
&= a_{1}\,x_{4}^{3}
+a_{2}\,x_{0}^{2}x_{5}+a_{3}\,x_{0}x_{1}x_{5}+a_{4}\,x_{1}^{2}x_{5}+a_{5}\,x_{0}x_{2}x_{5}+a_{6}\,x_{1}x_{2}x_{5}\\
&\quad +a_{7}\,x_{2}^{2}x_{5}+a_{8}\,x_{0}x_{3}x_{5}+a_{9}\,x_{1}x_{3}x_{5}+a_{10}\,x_{2}x_{3}x_{5}+a_{11}\,x_{3}^{2}x_{5}\\
&\quad+a_{12}\,x_{0}^{2}x_{6}+a_{13}\,x_{0}x_{1}x_{6}+a_{14}\,x_{1}^{2}x_{6}+a_{15}\,x_{0}x_{2}x_{6}+a_{16}\,x_{1}x_{2}x_{6}\\
&\quad +a_{17}\,x_{2}^{2}x_{6}+a_{18}\,x_{0}x_{3}x_{6}+a_{19}\,x_{1}x_{3}x_{6}+a_{20}\,x_{2}x_{3}x_{6}+a_{21}\,x_{3}^{2}x_{6}.
\end{aligned}
\]

\paragraph{$H$ and $C_{G}(H)$.}
Let $H=\lambda_{20}(\mathbb{G}_m)$. The $H$-weights on $\langle x_{0},\dots,x_{6}\rangle$ are
$(1,1,1,1,0,-2,-2)$ with block multiplicities
$\langle x_{0},x_{1},x_{2},x_{3}\rangle$, $\langle x_{4}\rangle$, $\langle x_{5},x_{6}\rangle$. Hence,
\[
\begin{aligned}
C_{G}(H)
&=\Bigl\{A\oplus\mathrm{diag}(\beta)\oplus C:\ A\in\mathrm{GL}(4),\ \beta\in\mathbb{G}_m,\
C\in\mathrm{GL}(2),\\
&\hspace{13.4em}\det(A)\,\beta\,\det(C)=1\Bigr\}\\
&\cong \bigl(\mathrm{GL}(4)\times\mathbb{G}_m\times\mathrm{GL}(2)\bigr)\cap \mathrm{SL}(7),
\end{aligned}
\]
so $\dim C_{G}(H)=20$. Every monomial of $\phi_{20}$ has $H$-weight $0$.

\paragraph{Polystability (Luna + Casimiro--Florentino).}
By Luna's reduction, the closedness of the $\mathrm{SL}(7)$-orbit of $\phi_{20}$ is equivalent
to polystability for the $C_{G}(H)$-action on the $H$-fixed subspace. After conjugating
inside the $\mathrm{GL}(4)$- and $\mathrm{GL}(2)$-blocks, any $\lambda\in Y(C_{G}(H))$ may be taken with weights
\[
\mathrm{wt}(x_{0},\dots,x_{6})=\bigl(a+u_{0},\,a+u_{1},\,a+u_{2},\,a+u_{3},\,b,\,c+w,\,c-w\bigr),
\]
where $a,b,c,u_{i},w\in\mathbb{Z}$, $u_{0}+u_{1}+u_{2}+u_{3}=0$, and, by Convention~\ref{convention},
\[
S:=4a+b+2c=0.
\]
Let $w(\cdot)$ denote the $\lambda$-weight of a monomial. Then
\[
\begin{aligned}
& w(x_{4}^{3})=3b,\\
& w(x_{i}^{2}x_{5})=2(a+u_{i})+c+w,\qquad
  w(x_{i}^{2}x_{6})=2(a+u_{i})+c-w\quad (i=0,1,2,3),\\
& w(x_{i}x_{j}x_{5})=2a+(u_{i}+u_{j})+c+w,\quad
  w(x_{i}x_{j}x_{6})=2a+(u_{i}+u_{j})+c-w\ (0\le i<j\le 3).
\end{aligned}
\]
A direct computation yields the positive identity
\begin{align}
&3\sum_{i=0}^{3}\bigl[w(x_{i}^{2}x_{5})+w(x_{i}^{2}x_{6})\bigr]
+3\sum_{0\le i<j\le 3}\bigl[w(x_{i}x_{j}x_{5})+w(x_{i}x_{j}x_{6})\bigr]\notag\\
&\qquad\qquad\qquad\qquad\qquad\qquad
+10\,w(x_{4}^{3})\;=\;30\,S. \label{eq:k20-identity}
\end{align}
If $\lambda\in\Lambda_{\phi_{20}}$, then all $21$ monomial weights are $\ge0$ and $S=0$; by
\eqref{eq:k20-identity} they must all vanish. Solving gives
\[
b=0,\quad w=0,\quad 2(a+u_{i})+c=0\ (i=0,1,2,3),\quad
2a+(u_{i}+u_{j})+c=0\ (i<j),
\]
Hence, $u_{0}=u_{1}=u_{2}=u_{3}=0$ and $2a+c=0$. Writing $a=k\in\mathbb{Z}$ yields
\[
\Lambda_{\phi_{20}}=\{\mu_{k}\mid k\in\mathbb{Z}\},\qquad
\mu_{k}(t)=\mathrm{diag}\!\bigl(t^{k},\,t^{k},\,t^{k},\,t^{k},\,1,\,t^{-2k},\,t^{-2k}\bigr).
\]
As $\Lambda_{\phi_{20}}$ is symmetric, $\phi_{20}$ is polystable by the
Casimiro--Florentino criterion; in particular, $\mathrm{SL}(7)\cdot\phi_{20}$ is closed.

\paragraph{Normal form and component dimension.}
We have
\[
W^{H}
=
\underbrace{\langle x_4^3\rangle}_{(\mathrm{I})}
\ \oplus\
\underbrace{\Sym^{2}\langle x_0,x_1,x_2,x_3\rangle\otimes \langle x_5\rangle}_{(\mathrm{II})}
\ \oplus\
\underbrace{\Sym^{2}\langle x_0,x_1,x_2,x_3\rangle\otimes \langle x_6\rangle}_{(\mathrm{III})},
\]
so a general element of $W^{H}$ is
\[
\varphi
=
a\,x_4^3
+ x_5 Q_5(x_0,\dots,x_3)
+ x_6 Q_6(x_0,\dots,x_3),
\qquad
Q_5,Q_6\in \Sym^{2}\langle x_0,\dots,x_3\rangle^{\vee}.
\]
Using the central torus factor together with projective scaling, we normalize $a=1$.

\medskip\noindent
\emph{Diagonalization of the quadratic pencil.}
Consider the pencil $\langle Q_5,Q_6\rangle$ and its discriminant
\[
\Delta(s,t):=\det(sQ_5+tQ_6),\qquad [s:t]\in \PP^{1}.
\]
On the dense open set where $\Delta$ has four distinct roots (equivalently, the pencil is regular, i.e.\ it lies off the discriminant locus), we may assume $Q_5$ is nondegenerate.
Acting by $\GL\langle x_0,x_1,x_2,x_3\rangle\cong \GL(4)$, we take
\[
Q_5 \sim q:=x_0^2+x_1^2+x_2^2+x_3^2.
\]
With respect to $q$, the residual group on $\langle x_0,\dots,x_3\rangle$ is $O(4)$.
For a regular pencil, the $q$-selfadjoint endomorphism $A$ defined by
$Q_6(u,v)=q(Au,v)$ has four distinct eigenvalues, hence after an $O(4)$-change
of coordinates
\[
Q_6 \sim \lambda_0 x_0^2+\lambda_1 x_1^2+\lambda_2 x_2^2+\lambda_3 x_3^2,
\qquad \lambda_i \ \text{pairwise distinct}.
\]

\medskip\noindent
\emph{Reduction to a one-parameter normal form (cross-ratio).}
The $\GL\langle x_5,x_6\rangle\cong \GL(2)$-action replaces $(Q_5,Q_6)$ by
\[
(Q_5',Q_6')=(\alpha Q_5+\beta Q_6,\ \gamma Q_5+\delta Q_6),
\qquad
\begin{pmatrix}\alpha&\beta\\ \gamma&\delta\end{pmatrix}\in\GL(2),
\]
and, after re-normalizing $Q_5'$ back to $q$, this acts on the eigenvalues by the
fractional-linear transformation
\[
\lambda\ \longmapsto\ \frac{\gamma+\delta\lambda}{\alpha+\beta\lambda}.
\]
Thus a regular pencil is determined (up to permuting the diagonal coordinates) by the
unordered set $\{\lambda_0,\lambda_1,\lambda_2,\lambda_3\}\subset\PP^{1}$ modulo $\mathrm{PGL}_2$,
i.e.\ by one cross-ratio parameter. Using $3$-transitivity of $\mathrm{PGL}_2$ on $\PP^{1}$,
we may normalize three eigenvalues to $1,-1,2$ and denote the fourth by $\tau$.
After permuting $(x_0,x_1,x_2,x_3)$ if necessary, we assume
\[
(\lambda_0,\lambda_1,\lambda_2,\lambda_3)=(1,\tau,-1,2),
\qquad
\tau\notin\{1,-1,2\}.
\]
Consequently, on the open set $\tau\in\C^{\times}\setminus\{1,-1,2\}$, a convenient normal form is
\[
\nf_{20}(\tau)
=
x_4^3
+ x_5(x_0^2+x_1^2+x_2^2+x_3^2)
+ x_6(x_0^2+\tau x_1^2- x_2^2+2x_3^2),
\qquad
\tau\in\C^{\times}\setminus\{1,-1,2\}.
\]
(For the excluded values, the pencil acquires repeated eigenvalues and the stabilizer jumps; these form a proper closed subset.)

\medskip\noindent
\emph{Component dimension.}
Here $\dim W^{H}=21$ and $\dim C_G(H)=20$, and $C_G(H)$ contains a one-dimensional central torus acting trivially on $W^{H}$.
Hence the effective action has dimension $19$, and after projectivizing we obtain
\[
\dim(\Phi_{20})=\dim W^{H}-19-1=1,
\]
in agreement with the single modulus $\tau$ for a regular pencil.

\subsection{Case $k=21$}

\paragraph{1-PS limit.}
Set
\[
  \lambda_{21}(t)=\operatorname{diag}\!\bigl(t,\,t,\,1,\,1,\,1,\,1,\,t^{-2}\bigr),
  \qquad t\in\mathbb{G}_m.
\]
For a generic $f_{21}$ as in Section~3, the $1$-PS limit is
\[
  \phi_{21}
  := \lim_{t\to0}\lambda_{21}(t)\cdot f_{21}
  = F(x_{2},x_{3},x_{4},x_{5}) + x_{6}\,Q(x_{0},x_{1}),
\]
where $F\in \Sym^{3}\langle x_{2},x_{3},x_{4},x_{5}\rangle$ and $Q\in \Sym^{2}\langle x_{0},x_{1}\rangle$.

\paragraph{$H$ and $C_G(H)$.}
Let $H=\lambda_{21}(\mathbb{G}_m)$.
The $H$-weights on $\langle x_{0},\dots,x_{6}\rangle$ are $(1,1,0,0,0,0,-2)$ with block decomposition
$\langle x_{0},x_{1}\rangle\oplus\langle x_{2},x_{3},x_{4},x_{5}\rangle\oplus\langle x_{6}\rangle$.
Hence,
\[
\begin{aligned}
  C_G(H)
  &=\Bigl\{\ A\oplus B\oplus \gamma \ :\ A\in\mathrm{GL}(2),\ B\in\mathrm{GL}(4),\ \gamma\in\mathbb{G}_m,\ 
     \det(A)\det(B)\gamma=1\ \Bigr\}\notag\\
  &\cong \bigl(\mathrm{GL}(2)\times \mathrm{GL}(4)\times \mathbb{G}_m\bigr)\cap \mathrm{SL}(7),
  \qquad \dim C_G(H)=20.
  \label{eq:k21-centralizer}
\end{aligned}
\]
Every monomial of $\phi_{21}$ has $H$-weight $0$, so $\phi_{21}\in W^{H}$.

\paragraph{Polystability (Luna + Casimiro--Florentino).}
By Luna's reduction, the closedness of the $\mathrm{SL}(7)$-orbit of $\phi_{21}$ is equivalent to polystability for the $C_G(H)$-action on $W^{H}$.
After conjugating inside the $\mathrm{GL}(2)$- and $\mathrm{GL}(4)$-blocks, any $\lambda\in Y\!\bigl(C_G(H)\bigr)$ may be taken with
\[
  \mathrm{wt}(x_{0},\ldots,x_{6})=(\alpha+u,\,\alpha-u,\,\beta+v_1,\,\dots,\,\beta+v_4,\,\gamma),
\]
where $\sum v_i=0$, and with the $\mathrm{SL}$-constraint $S:=2\alpha+4\beta+\gamma=0$.

Summing the $\lambda$-weights of the $20$ cubic monomials in $F$ gives
\[
w(F)=15\bigl((\beta+v_1)+(\beta+v_2)+(\beta+v_3)+(\beta+v_4)\bigr)=60\beta,
\]
while the weights of the three quadratic monomials in $x_6Q$ sum to
\[
w(x_6Q)=(\gamma+2(\alpha+u))+(\gamma+2\alpha)+(\gamma+2(\alpha-u))=3\gamma+6\alpha.
\]
Using $S=0$, we obtain the positive linear identity
\begin{equation}\label{eq:k21-identity}
  w(F) + 5\,w(x_6Q)
  \;=\;
  60\beta + 5(3\gamma+6\alpha)
  \;=\;
  15(2\alpha+4\beta+\gamma)
  \;=\;
  15\,S.
\end{equation}
If $\lambda\in\Lambda_{\phi_{21}}$, then every monomial weight occurring in $F$ and $x_6Q$ is $\ge 0$ and $S=0$; hence \eqref{eq:k21-identity} forces
\[
w(F)=0,\qquad w(x_6Q)=0,
\]
so all those monomial weights vanish individually.
From the cubic monomials $x_2^3,\dots,x_5^3$ we get $\beta+v_i=0$ for all $i$, hence $\beta=0$ and $v_i=0$.
From the quadratic monomials $x_6x_0^2$ and $x_6x_1^2$ we get
\[
\gamma+2\alpha+2u=0,\qquad \gamma+2\alpha-2u=0,
\]
hence $u=0$ and $\gamma+2\alpha=0$.
Therefore $\lambda$ is of the form
\[
\lambda(t)=\operatorname{diag}\!\bigl(t^{k},\,t^{k},\,1,\,1,\,1,\,1,\,t^{-2k}\bigr)
=\lambda_{21}(t^{k})
\qquad (k\in\mathbb{Z}),
\]
and consequently
\[
\Lambda_{\phi_{21}}=\{\lambda_{21}(t^{k})\mid k\in\mathbb{Z}\}.
\]
By Lemma~\ref{lem:rank1-symmetry}, $\Lambda_{\phi_{21}}$ is symmetric; hence, by the Casimiro--Florentino criterion, $\phi_{21}$ is polystable.

\paragraph{Normal form and component dimension.}
We normalize generic elements under the action of the centralizer $C_G(H)$.

\medskip\noindent
(1) \textbf{Quadratic part.} Under the $\mathrm{GL}(2)$-action on $\langle x_0, x_1\rangle$, a generic binary quadratic $Q$ is normalized to $x_0^2 + x_1^2$. The stabilizer of this form contains the orthogonal group $\mathrm{O}(2)$, contributing \textbf{1 dimension} to the stabilizer of the generic point.

\medskip\noindent
(2) \textbf{Cubic part.} Under the $\mathrm{GL}(4)$-action on $\langle x_2, x_3, x_4, x_5\rangle$, a generic cubic surface $F$ can be written as the sum of five cubes of linear forms (\textbf{Sylvester's Pentahedron Theorem}). Let $L_0, \dots, L_4$ be fixed linear forms defining a generic pentahedron in $\mathbb{C}^4$. Then $F = \sum_{i=0}^4 c_i L_i^3$.

\medskip\noindent
Thus, the normal form is:
\[
\nf_{21}(c) = \sum_{i=0}^4 c_i L_i(x_2, x_3, x_4, x_5)^3 + x_6(x_0^2 + x_1^2),
\]
where the parameters are $[c_0 : \dots : c_4] \in \mathbb{P}^4$.

\medskip\noindent
The dimension of the component is computed as follows:
$\dim W^H = 20 (\text{cubic}) + 3 (\text{quadratic}) = 23$.
Since $H$ acts trivially on $W^H$, the effective group is $C_G(H)/H$, of dimension $20-1=19$.
Since the generic stabilizer has dimension $1$ (due to $\mathrm{O}(2)$), the orbit dimension is $19 - 1 = 18$.
\[
\dim(\Phi_{21}) = 23 - 18 - 1 = 4.
\]

\begin{table}[htbp]
\centering
\small
\setlength{\tabcolsep}{4pt}
\renewcommand{\arraystretch}{1.2}
\caption{Summary of closed orbit representatives, criteria used, and component dimensions for $k=1,\dots,21$. C-H means the Convex-hull criterion and C-F means the Casimiro--Florentino criterion.}
\label{table:normalforms}
\label{tab:case-summary}
\begin{tabular}{c|c|p{0.58\linewidth}|c|c}
\hline
$k$ & Criterion & Normal form & Parameters & Dim \\
\hline
1 & C-H &
$ x_{2}x_{3}^{2}+x_{1}x_{3}x_{4}+x_{2}^{2}x_{5}+x_{0}x_{5}^{2}+x_{1}^{2}x_{6}+x_{0}x_{4}x_{6}$ &
none & $0$ \\
\hline
2 & C-H &
$ x_{2}^{2}x_{4}+x_{1}x_{4}^{2}+x_{1}x_{3}x_{5}+x_{0}x_{5}^{2}+x_{1}x_{2}x_{6}+x_{0}x_{3}x_{6}$ &
none & $0$ \\
\hline
3 & C-F &
$ x_{1}x_{3}^{2}+x_{1}x_{3}x_{4}+x_{1}x_{4}^{2}+x_{2}^{2}x_{5}+x_{0}x_{5}^{2}+x_{1}x_{2}x_{6}+x_{0}x_{3}x_{6}
+x_{0}x_{4}x_{6}$ &
none & $0$ \\
\hline
4 & C-H &
$ x_{3}^{3}+x_{2}x_{3}x_{4}+x_{1}x_{4}^{2}+x_{2}^{2}x_{5}+x_{1}x_{3}x_{5}+x_{0}x_{4}x_{5}+\alpha x_{1}x_{2}x_{6}
+\beta x_{0}x_{3}x_{6}$ &
$\alpha,\beta \in \mathbb{C}^{\times}$ & $2$ \\
\hline
5 & C-H &
$ x_{3}^{3}+x_{2}x_{3}x_{4}+x_{1}x_{4}^{2}+x_{2}^{2}x_{5}+x_{1}x_{3}x_{5}+x_{0}x_{5}^{2}+\alpha x_{1}^{2}x_{6}
+\beta x_{0}x_{3}x_{6}$ &
$\alpha,\beta \in \mathbb{C}^{\times}$ & $2$ \\
\hline
6 & C-H &
$ x_{2}x_{4}^{2}+x_{2}^{2}x_{5}+x_{1}x_{3}x_{5}+x_{0}x_{4}x_{5}+x_{1}^{2}x_{6}+x_{0}x_{3}x_{6}$ &
none & $0$ \\
\hline
7 & C-H &
$ x_{3}^{2}x_{4}+x_{1}x_{4}^{2}+x_{2}x_{3}x_{5}+x_{0}x_{5}^{2}+x_{1}^{2}x_{6}+x_{0}x_{2}x_{6}$ &
none & $0$ \\
\hline
8 & C-F &
$ x_{3}^{3}+x_{0}x_{4}^{2}+x_{0}x_{5}^{2}+x_{0}x_{6}^{2}+x_{1}^{2}x_{4}
+\rho\,x_{1}x_{2}x_{5}+\sigma\,x_{2}^{2}x_{6}$ &
$(\rho,\sigma)\in(\mathbb{C}^{\times})^{2}$ & $2$ \\
\hline
9 & C-F &
$ x_{2}^{2}x_{3}+\tau\,x_{2}x_{3}^{2}+x_{0}x_{4}^{2}+\rho\,x_{0}x_{5}^{2}
+x_{1}x_{4}^{2}+x_{1}x_{5}^{2}+x_{0}x_{2}x_{6}+x_{1}x_{3}x_{6}$ &
$(\tau,\rho)\in(\mathbb{C}^{\times})^{2}$ & $2$ \\
\hline
10 & C-F &
$ x_{2}^{3}+\tau\,x_{2}^{2}x_{3}+\rho\,x_{2}x_{3}^{2}+x_{3}^{3}
+ x_{1}x_{2}x_{5}+x_{1}x_{3}x_{4}
+ x_{0}x_{4}x_{5}+x_{0}x_{6}^{2}$ &
$(\tau,\rho)\in(\mathbb{C}^{\times})^{2}$ & $2$ \\
\hline
11 & C-F &
$\sum_{i=0}^4 c_i L_i(x_1, x_2, x_3, x_4)^3 + x_0(x_5^2 + x_6^2)$ &
$(c_0:c_1:c_2:c_3:c_4) \in \mathbb{P}^{4}$ & $4$ \\
\hline
12 & C-F &
$ x_{1}x_{4}^{2}+x_{2}^{2}x_{5}+x_{3}^{2}x_{5}+x_{0}x_{4}x_{5}+x_{1}^{2}x_{6}+x_{0}x_{2}x_{6}$ &
none & $0$ \\
\hline
13 & C-F &
$ x_{3}^{3}+x_{0}x_{4}^{2}+x_{0}x_{5}^{2}+x_{1}x_{3}x_{4}
+\rho\,x_{2}x_{3}x_{5}+x_{1}^{2}x_{6}+\sigma\,x_{2}^{2}x_{6}+x_{0}x_{3}x_{6}$ &
$(\rho,\sigma)\in(\mathbb{C}^{\times})^{2}$ & $2$ \\
\hline
14 & C-F &
$x_{2}^{3}+ x_{0}\bigl(x_{3}^{2}+x_{4}^{2}+x_{5}^{2}+x_{6}^{2}\bigr) + x_{1}\bigl(x_{3}^{2}+\tau x_{4}^{2}-x_{5}^{2}+2x_{6}^{2}\bigr)$
&
$\tau\in\mathbb{C}^{\times}\setminus\{-1,1,2\}$ & $1$\\
\hline
15 & C-F &
$ x_{3}^{2}x_{4}+\tau\,x_{3}x_{4}^{2}+x_{0}x_{3}x_{5}+x_{0}x_{4}x_{6}
+x_{1}^{2}x_{5}+\rho\,x_{2}^{2}x_{6}$ &
$(\tau,\rho)\in(\mathbb{C}^{\times})^{2}$ & $2$ \\
\hline
16 & C-F &
$ x_{2}^{3}+x_{3}^{3}+\sigma x_{4}^{3}+\rho\,x_{2}x_{3}x_{4}
+x_{1}x_{2}x_{5}+x_{0}x_{3}x_{6}+x_{0}x_{5}^{2}+x_{1}^{2}x_{6}
+\kappa\,x_{2}^{2}x_{3}+\nu\,x_{2}x_{3}^{2}+\mu\,x_{3}^{2}x_{4}+\lambda\,x_{2}^{2}x_{4}$ &
$(\sigma,\rho,\kappa,\nu,\mu,\lambda)\in(\mathbb{C}^{\times})^{6}$ & $6$ \\
\hline
17 & C-F &
$ x_{3}^{2}x_{4}+\tau\,x_{3}x_{4}^{2}+x_{0}x_{3}x_{5}+x_{1}x_{4}x_{5}
+x_{0}^{2}x_{6}+x_{1}^{2}x_{6}+\rho\,x_{2}^{2}x_{6}$ &
$(\tau,\rho)\in(\mathbb{C}^{\times})^{2}$ & $2$ \\
\hline
18 & C-F &
$ x_{2}^{3}+\sigma_{1}x_{3}^{3}+\sigma_{2}x_{4}^{3}+\rho\,x_{2}x_{3}x_{4}
+x_{0}x_{2}x_{5}+x_{1}x_{3}x_{5}+\alpha\,x_{0}x_{4}x_{5}
+x_{0}x_{2}x_{6}+\beta\,x_{1}x_{3}x_{6}+\gamma\,x_{1}x_{4}x_{6}$ &
$(\sigma_{1},\sigma_{2},\rho,\alpha,\beta,\gamma)\in(\mathbb{C}^{\times})^{6}$ & $6$ \\
\hline
19 & C-F &
$x_{3}^{3} + (x_{0}x_{4}^{2}+x_{1}x_{4}x_{5}+x_{2}x_{5}^{2})
    + x_{6}(x_{0}^{2}+x_{2}^{2}+\rho\,x_{1}^{2}+\sigma\,x_{0}x_{2})$ &
$(\rho, \sigma) \in (\mathbb{C}^{\times})^{2}$ & $2$ \\
\hline
20 & C-F &
$x_4^3
+ x_5(x_0^2+x_1^2+x_2^2+x_3^2)
+ x_6(x_0^2+\tau x_1^2- x_2^2+2x_3^2)$ &
$\tau\in\mathbb{C}^{\times}\setminus\{-1,1,2\}$ & $1$ \\
\hline
21 & C-F &
$\sum_{i=0}^4 c_i L_i(x_2, x_3, x_4, x_5)^3 + x_6(x_0^2 + x_1^2)$  &
$(c_0:c_1:c_2:c_3:c_4) \in \mathbb{P}^{4}$  & $4$ \\
\hline
\end{tabular}
\end{table}

\newpage

\section{Singular loci of 21 polystable cubic fivefolds}\label{sec:5}

In this section, we determine the singular loci of general closed--orbit representatives 
$\nf_k$ constructed in Section~\ref{sec:closed-orbit}. 
For each $k=1,\dots,21$, we set 
\[
X_k := V(\nf_k) \subset \mathbb{P}^6
\]
and compute
\[
\Sing(X_k) \;=\; V\big(J(\nf_k)\big) \subset \mathbb{P}^6,
\qquad
J(\varphi) := \left(\frac{\partial \varphi}{\partial x_0},\dots,\frac{\partial \varphi}{\partial x_6}\right):(x_0,\dots,x_6)^\infty,
\]
i.e.\ the scheme cut out by the saturated Jacobian ideal. 
We give a set--theoretic description of $\Sing(X_k)$ in each case, and when isolated points occur, 
we also record the corank and the local invariants (Milnor and Tjurina number, which agree 
for our quasi--homogeneous normal forms). 
Table~3 presents a compact summary---listing the type and degree of the top--dimensional part and indicating the presence (or absence) of isolated points.  
Detailed case--by--case statements are recorded as Propositions~\ref{start}--\ref{end}.

Note that for the boundary families with moduli (parameters), the descriptions of the singular loci in the following propositions and the minimal exponent computations in Section 6 apply to general parameters. At certain special values of the parameters, the singularities may degenerate further, and the minimal exponent may strictly drop.

\subsection*{Isolated singularities appearing in this paper}
We define the isolated singularities that appear in the investigation of the singularities of $\nf_{k}$. Since we could not locate an explicit normal form in the standard tables (e.g. \cite{AGV12}), we fix notation here.

\begin{definition}\label{def:QH1}
We write $\mathrm{QH}(9,6,5;24)_{19}$ for the isolated hypersurface singularity analytically equivalent to 
$X^{2}Y + Y^{4} + XZ^{3}$. 
It is quasi-homogeneous of total degree $24$ with respect to weights $(w_X,w_Y,w_Z)=(9,6,5)$. Its corank, Milnor number and Tjurina number are $3$, $19$ and $19$ respectively.
\end{definition}

\begin{definition}\label{def:QH2}
We write $\mathrm{QH}(2,3,5;12)_{21}(t)$ for the isolated hypersurface singularity analytically equivalent to 
$ t X^{6}+X^{3}Y^{2}+X^{2}YZ+XZ^{2}+Y^{4}$, where $t \in \mathbb{C}^{\times}$ is a parameter. 
It is quasi-homogeneous of total degree $12$ with respect to weights $(w_X,w_Y,w_Z)=(2,3,5)$. Its corank, Milnor number and Tjurina number are $3$, $21$ and $21$ respectively.
\end{definition}

\begin{definition}
We write $\mathrm{QH}(3,3,4;12)_{18}(t)$ for the isolated hypersurface singularity analytically equivalent to 
$Z^3+X^4+t X^2 Y^2+Y^4$.
It is quasi-homogeneous of total degree $12$ with respect to weights $(w_X,w_Y,w_Z)=(3,3,4)$. Its corank, Milnor number and Tjurina number are $3$, $18$ and $18$ respectively.
\end{definition}

\begin{definition}\label{QH25}
We write $\mathrm{QH}(1,1;6)_{25}$ for the isolated hypersurface singularity
analytically equivalent to a homogeneous sextic
\[
\Psi(X,Y)=\prod_{i=1}^{6} L_i(X,Y),
\]
where $L_1,\dots,L_6$ are pairwise distinct linear forms in $(X,Y)$.
Equivalently, after a linear change of coordinates, one may write
$\Psi(X,Y)=\prod_{i=1}^6 (X-\lambda_i Y)$ with $\lambda_i$ pairwise distinct in
$\mathbb{P}^1$.

It is quasi-homogeneous of total degree $6$ with respect to weights
$(w_X,w_Y)=(1,1)$.
Its corank, Milnor number and Tjurina number are $2$, $25$ and $25$ respectively.
\end{definition}

\begin{definition}
We write $QH(1,1,1,1;3)_{16}([c])$ for the isolated hypersurface singularity
analytically equivalent to a homogeneous cubic
\[
G(X_1,X_2,X_3,X_4)=\Sigma_{i=0}^{4}c_{i}L_{i}^3
\]
where $L_{i}=L_{i}(X_1,X_2,X_3,X_4)$ are generic linear forms and $[c]=(c_0:c_1:c_2:c_3:c_4) \in \mathbb{P}^{4}$.  
Its corank, Milnor number and Tjurina number are $4$, $16$ and $16$. It has four-dimensional moduli.
\end{definition}

\begin{definition}
$\mathrm{QH}(1,2,2;6)_{20}(t)$
\[
F(X,Y,Z) = Y^3 + Z^3 + X^6 
    + t_1 X^2 Y^2 + t_2 X^2 Y Z + t_3 Y^2 Z + t_4 X^4 Y + t_5 X^4 Z,
\]
where $t=(t_{1},t_{2},\cdots,t_{5})$.
Its corank, Milnor number and Tjurina number are $3$, $20$ and $20$.
\end{definition}

\begin{remark}[Remark for Definition~\ref{QH25}]
The germ $\Psi(X,Y)=\prod_{i=1}^{6}L_i(X,Y)$ with pairwise distinct linear forms
defines an \emph{ordinary sextuple point} (also called an ordinary $6$-fold point):
it has $6$ smooth branches meeting transversely at the origin, with $6$ distinct
tangent directions in $\mathbb{P}^1$.
\end{remark}

We will frequently use the following shorthand labels for isolated singularity types:
\[
\begin{array}{c|c|c}
\text{Shorthand Label} & Label & \text{Defining equation} \\ \hline
\mathsf{E}_{19} & \mathrm{QH}(9,6,5;24)_{19} & X^{2}Y + Y^{4} + XZ^{3} = 0\\ 
\mathsf{E}_{21} & \mathrm{QH}(2,3,5;12)_{21}(t)  &   t X^{6}+X^{3}Y^{2}+X^{2}YZ+XZ^{2}+Y^{4} = 0\\
\mathsf{E}_{18} & \mathrm{QH}(3,3,4;12)_{18}(t)  &  Z^3+X^4+t X^2 Y^2+Y^4=0 \\ 
\mathcal{O}_{6} & \mathrm{QH}(1,1;6)_{25} & \prod_{i=1}^{6} (X-\lambda_{i}Y) = 0 \\
\mathcal{C}_{cub} & \mathrm{QH}(1,1,1,1;3)_{16}([c]) & \Sigma_{i=0}^{4}c_{i}L_{i}^3=0 \\
\mathsf{E}_{20} & \mathrm{QH}(1,2,2;6)_{20}(t) &  \scriptstyle{Y^3 + Z^3 + X^6 
    + t_1 X^2 Y^2 + t_2 X^2 Y Z + t_3 Y^2 Z + t_4 X^4 Y + t_5 X^4 Z=0}\\

\hline
\end{array}
\]

\subsection{Case $k=1$}

\begin{proposition}\label{start}
Let $X_1=V(\nf_{1})\subset\mathbb{P}^6$. The set-theoretic singular locus is
\[
  \Sing(X_1)= C \cup \{P\},
\]
where
\[
  C=\{\,x_0=x_1=x_2=x_3=0,\; x_5^{\,2}+x_4x_6=0\,\}
\]
is a smooth conic in the plane $\{x_0=x_1=x_2=x_3=0\}\cong\mathbb{P}^2$ (hence, $\deg C=2$), and
\[
  P=(1:0:0:0:0:0:0)
\]
is an isolated singular point.

The isolated singular point is of type $\mathsf{E}_{19}$.
\end{proposition}

\subsection{Case $k=2$}
\begin{proposition}
Let $X_2 = V(\nf_{2}) \subset \mathbb{P}^{6}$. Then
\[
\Sing(X_2)= L_{01}\cup C,
\]
with $L_{01}=V(x_2,x_3,x_4,x_5,x_6)\simeq \mathbb{P}^1$, and 
\[
C = V(x_0,x_1,x_2,\;x_5^2+x_3x_6,\;x_4^2+x_3x_5)\subset \Pi,\quad
\Pi=\{x_0=x_1=x_2=0\}\simeq\mathbb{P}^3.
\]
Here $C$ is a complete intersection $\mathrm{CI}(2,2)$ of degree $4$, with $L_{01}\cap C=\varnothing$.
\end{proposition}

\subsection{Case $k=3$}
\begin{proposition}
Let $X_3 = V(\nf_{3}) \subset \mathbb{P}^{6}$. Then
\[
\Sing(X_3)=L_{01}\cup C,
\]
with $L_{01}=V(x_2,x_3,x_4,x_5,x_6)\simeq \mathbb{P}^1$, and 
\[
C = V(x_0,x_1,x_2, x_5^2+x_3x_6+x_4x_6, x_3^2+x_3x_4+x_4^2)\subset \Pi,\quad
\Pi=\{x_0=x_1=x_2=0\}\simeq\mathbb{P}^3.
\]
Here $C$ is a complete intersection $\mathrm{CI}(2,2)$ of degree $4$, with $L_{01}\cap C=\varnothing$.

\end{proposition}

\subsection{Case $k=4$}
\begin{proposition}
Let $X_4 = V(\nf_{4}) \subset \mathbb{P}^{6}$. Then
\[
\Sing(X_4)=L_{01}\cup L_{56},
\]
with $L_{01}=V(x_2,x_3,x_4,x_5,x_6)\simeq \mathbb{P}^1$ and 
$L_{56}=V(x_0,x_1,x_2,x_3,x_4)\simeq \mathbb{P}^1$.
In particular, $L_{01}\cap L_{56}=\varnothing$ and no isolated singular points occur.
\end{proposition}

\subsection{Case \(k=5\)}

\begin{proposition}\label{prop:k5}
Let $X_5 = V(\nf_{5}) \subset \mathbb{P}^{6}$. Then
\[
\Sing(X_{5})=\{ P \} \cup \{ Q \},
\]
where $P=(1:0:0:0:0:0:0)$, $Q=(0:0:0:0:0:0:1)$ are isolated singular points.

Then $(X_5,P)$ and $(X_5,Q)$ are isolated hypersurface singularities of corank $3$,
and they are analytically equivalent.
The isolated singular points $P,Q$ are of type $\mathsf{E}_{21}$ and have same parameter $t$.
\end{proposition}

\subsection{Case $k=6$}
\begin{proposition}\label{k6}
Let $X_6 = V(\nf_{6}) \subset \mathbb{P}^{6}$. Then
\[
\Sing(X_6)= C \cup L_{56},\quad L_{56}=V(x_0,x_1,x_2,x_3,x_4),
\]
\[
C=V(x_4,x_5,x_6,\;x_2^2+x_1x_3,\;x_1^2+x_0x_3)\subset \Pi=\{x_4=x_5=x_6=0\}.
\]
$C$ is a $\mathrm{CI}(2,2)$ of degree $4$; $C\cap L_{56}=\varnothing$.
\end{proposition}

\subsection{Case $k=7$}
\begin{proposition}\label{k7}
Let $X_7 = V(\nf_{7}) \subset \mathbb{P}^{6}$. Then
\[
\Sing(X_7)=C \cup \{ P \},
\]
where
\[
C=V(x_3,x_4,x_5,x_6,\;x_1^2+x_0x_2),\quad P=(0:0:0:0:0:0:1).
\]
The isolated singular point $P$ is of type $\mathsf{E}_{19}$.
\end{proposition}

\subsection{Case $k=8$}

\begin{proposition}\label{prop:case-k8}
Let $X_8 := V(\nf_8)\subset \mathbb{P}^6$.
Then the singular locus has two irreducible components:

$$\Sing(X_8)=C\cup\{P\}$$, where
$$
C:=V\!\bigl(x_0,x_1,x_2,x_3,\;x_4^{2}+x_5^{2}+x_6^{2}\bigr)
\subset \Pi:=\{x_0=x_1=x_2=x_3=0\}\cong \mathbb{P}^2
$$
is a (smooth) conic of degree $2$ (hence $\mathrm{codim}_{\mathbb{P}^6}C=5$), and
$$
P:=V\!\bigl(x_1,x_2,x_3,x_4,x_5,x_6\bigr)=(1:0:0:0:0:0:0)
$$
is an isolated point (hence $\mathrm{codim}_{\mathbb{P}^6}P=6$, $\deg P=1$).
In particular, $C\cap\{P\}=\varnothing$.
The isolated singular point $P$ is of type $\mathsf{E}_{18}$.
\end{proposition}

\subsection{Case $k=9$}

\begin{proposition}\label{prop:case-k9}
Let $X_{9} := V(\nf_{9})\subset \mathbb{P}^{6}$.
Then the singular locus has two irreducible components:
$$
\Sing(X_9)=L_{01} \cup \{ P \},
$$
where $L_{01}=V(x_2,x_3,x_4,x_5,x_6)$ and $P=(0:0:0:0:0:0:1)$.
The isolated point $P$ is of type $\mathcal{O}_{6}$.
\end{proposition}

\subsection{Case $k=10$}

\begin{proposition}\label{prop:case-k10}
Let $X_{10} := V(\nf_{10})\subset \mathbb{P}^{6}$.
Then the singular locus has two irreducible components:
$$
\Sing(X_{10})=L_{01} \cup C,
$$
where $L_{01}=V(x_2,x_3,x_4,x_5,x_6)$ and $C=V(x_0,x_1,x_2,x_3,x_{4}x_{5}+x_{6}^{2})$ which is a curve of degree $2$. We have $L_{01} \cap C = \varnothing$.
\end{proposition}

\subsection{Case $k=11$}

\begin{proposition}\label{prop:case-k11}
Let $X_{11} := V(\nf_{11})\subset \mathbb{P}^{6}$.
Then the singular locus has three irreducible components:
$$
\Sing(X_{11})=\{ P_{1} \} \cup \{ P_{2} \} \cup \{ P_{3} \},
$$
where $P_{1},P_{2}$ and $P_{3}$ are isolated singular points of type $\mathcal{C}_{cub}$.
\end{proposition}

\subsection{Case $k=12$}

\begin{proposition}\label{prop:case-k12}
Let $X_{12} := V(\nf_{12})\subset \mathbb{P}^{6}$.
Then the singular locus has two irreducible components:
$$
\Sing(X_{12})=L_{56} \cup C,
$$
where $L_{56}=V(x_0,x_1,x_2,x_3,x_4)$ is a projective line and $C=V(x_2^2+x_3^2,x_1^2+x_0 x2,x_4,x_5,x_6)$ is a $(2,2)$-complete intersection. We have $L_{56} \cap C = \varnothing$.
\end{proposition}

\subsection{Case $k=13$}

\begin{proposition}\label{prop:case-k13} 
Let $X_{13} := V(\nf_{13}(\rho,\sigma))\subset \mathbb{P}^{6}$.
Then the singular locus has two connected components:
$$
\Sing(X_{13})=C_{1} \cup C_{2},
$$
where $C_{1} = V(x_{1}^2+\sigma x_{2}^2,x_{3},x_{4},x_{5},x_{6})$ and $C_{2} = V(x_{0},x_{1},x_{2},x_{3},x_{4}^2+x_{5}^2)$. Each $C_{i}$ consists of two lines which intersect at one point. We have $C_{1} \cap C_{2} = \varnothing$.
\end{proposition}

\subsection{Case $k=14$}

\begin{proposition}\label{prop:case-k14} 
Let $X_{14} := V(\nf_{14}(\tau))\subset \mathbb{P}^{6}$.
Then the singular locus has two irreducible components:
$$
\Sing(X_{14})=L_{01} \cup C,
$$
where $L_{01}= V(x_{2},x_{3},x_{4},x_{5},x_{6})$ and 
\[
C = V(x_{0},x_{1},x_{2},(\tau-1)x_{4}^2-2x_{5}^2+x_{6}^2,(\tau-1)x_{3}^2+(\tau+1)x_{5}^2+(\tau-2)x_{6}^2).
\]
We have $L_{01} \cap C = \varnothing$.
\end{proposition}

\subsection{Case $k=15$}

\begin{proposition}\label{prop:case-k15}
Let $X_{15} := V(\nf_{15}(\tau,\sigma))\subset \mathbb{P}^{6}$.
Then the singular locus consists of three irreducible components:
$$
\Sing(X_{15})=C_{1} \cup C_{2} \cup L,
$$
where
$C_{1} = V(x_{1}, x_{3}, x_{6}, \tau x_{4}^2 + x_{0}x_{5}, \sigma x_{2}^2 + x_{0}x_{4})$,
$C_{2} = V(x_{2}, x_{4}, x_{5}, x_{3}^2 + x_{0}x_{6}, x_{1}^2 + x_{0}x_{3})$,
and $L = V(x_{0},x_{1},x_{2},x_{3},x_{4})$.
Here, $C_{1}$ and $C_{2}$ are curves of degree $4$ (complete intersections of two quadrics in $\mathbb{P}^3$), and $L$ is a line.
\end{proposition}

\subsection{Case $k=16$}
\begin{proposition}\label{prop:case-k16}
Let $X_{16} := V(\nf_{16})\subset \mathbb{P}^{6}$.
Then the singular locus has two irreducible components:
$$
\Sing(X_{16})=\{ P \} \cup \{ Q \},
$$
$P=(1:0:0:0:0:0:0)$, $Q=(0:0:0:0:0:0:1)$,
where $P,Q$ are isolated singular points of type $\mathsf{E}_{20}$ with the same modulus.
\end{proposition}

\subsection{Case $k=17$}

\begin{proposition}\label{prop:case-k17}
Let $X_{17} := V(\nf_{17}(\tau,\sigma))\subset \mathbb{P}^{6}$.
Then the singular locus consists of two disjoint components:
$$
\Sing(X_{17})=C \cup L_{56},
$$
where
$C = V(x_{3}, x_{4}, x_{5}, x_{6}, x_{0}^2 + x_{1}^2 + \sigma x_{2}^2)$
and
$L_{56} = V(x_{0}, x_{1}, x_{2}, x_{3}, x_{4})$.
Here, $C$ is a smooth conic (degree $2$) and $L$ is a line (degree $1$). We have $C \cap L_{56} = \varnothing$.
\end{proposition}

\subsection{Case $k=18$}
\begin{proposition}\label{prop:case-k18}
Let $X_{18} := V(\nf_{18})\subset \mathbb{P}^{6}$.
Then the singular locus consists of two disjoint components:
$$
\Sing(X_{18})=L_{1} \cup L_{2},
$$
where
$L_{1} = V(x_{2}, x_{3}, x_{4}, x_{5}, x_{6})$
and
$L_{2} = V(x_{0}, x_{1}, x_{2}, x_{3}, x_{4})$.
Here, both $L_{1}$ and $L_{2}$ are lines (degree $1$). We have $L_{1} \cap L_{2} = \varnothing$.
\end{proposition}

\subsection{Case $k=19$}
\begin{proposition}\label{prop:case-k19}
Let $X_{19} := V(\nf_{19})\subset \mathbb{P}^{6}$.
Then the singular locus has two irreducible components:
$$
\Sing(X_{19})=\{ P \} \cup C,
$$
$P=(0:0:0:0:0:0:1)$, $C=V(x_{0}^2+\rho x_{1}^2+\sigma x_{0}x_{2}+x_{2}^2,x_{3},x_{4},x_{5},x_{6})$. 
Here $P$ is an isolated singular point of type $\mathsf{E}_{18}$.
\end{proposition}

\subsection{Case $k=20$}

\begin{proposition}\label{prop:case-k20}
Let $X_{20} := V(\nf_{20}(\tau))\subset \mathbb{P}^{6}$.
Then the singular locus consists of two disjoint components:
$$
\Sing(X_{20})=C \cup L,
$$
where
$C = V(x_{4}, x_{5}, x_{6}, (\tau-1) x_{1}^2 - 2x_{2}^2 + x_{3}^2, (\tau-1)x_{0}^2 + (\tau+1)x_{2}^2 + (\tau-2)x_{3}^2)$
and
$L = V(x_{0}, x_{1}, x_{2}, x_{3}, x_{4})$.
Here, $C$ is a curve of degree $4$ (a complete intersection of two quadrics, which is an elliptic curve) and $L$ is a line (degree $1$). We have $C \cap L = \varnothing$.
\end{proposition}

\subsection{Case $k=21$}

\begin{proposition}\label{end}
Let $X_{21} := V(\nf_{21})\subset \mathbb{P}^{6}$.
Then the singular locus has three irreducible components:
$$
\Sing(X_{21})=\{ P_{1} \} \cup \{ P_{2} \} \cup \{ P_{3} \},
$$
where $P_{1},P_{2}$ and $P_{3}$ are isolated singular points of type $\mathcal{C}_{cub}$.
\end{proposition}

\newpage

\begin{table}[H]
  \centering
  \small
  \renewcommand{\arraystretch}{1.2}
  \setlength{\tabcolsep}{3pt}
  \caption{Singular loci of the 21 closed-orbit representatives (Section~\ref{sec:5})}
  \label{tab:section5-sing}
  \begin{tabular}{@{}c p{5.2cm} p{4.5cm} p{3.2cm}@{}}
    \toprule
    $k$ & Singular locus (notation of \S\ref{sec:5}) & Type / degree of top-dimensional part & Isolated point(s) / invariants \\
    \midrule
     1  & $C \cup \{P\}$ ($C$ smooth conic)
        & conic (deg~2)
        & one point $P$ of type $\mathsf{E}_{19}$\\

     2  & $L_{01} \cup C$ ($L_{01}\cong \mathbb{P}^1$, $C$ a CI$(2,2)$)
        & line $+$ quartic curve (deg~1+4)
        & --- \\

     3  & $L_{01} \cup C$ ($L_{01}\cong \mathbb{P}^1$, $C$ a CI$(2,2)$)
        & line $+$ quartic curve (deg~1+4)
        & --- \\

     4  & $L_{01} \cup L_{56}$ (two disjoint lines)
        & two disjoint lines (deg~1+1)
        & --- \\

     5  & $\{P\} \cup \{Q\}$
        & two points
        & $P,Q$ of type $\mathsf{E}_{21}$ (same modulus) \\

     6  & $C \cup L_{56}$ ($C$ a CI$(2,2)$, $L_{56}$ line)
        & line $+$ quartic curve (deg~1+4)
        & --- \\

     7  & $\{P\} \cup C$ ($C$ conic)
        & conic (deg~2)
        & one point $P$ of type $\mathsf{E}_{19}$ \\

     8  & $C \cup \{P\}$ ($C$ smooth conic)
        & conic (deg~2)
        & one point $P$ of type $\mathsf{E}_{18}$\\

     9  & $L_{01} \cup \{P\}$ ($L_{01}$ line)
        & line (deg~1)
        & one point $P$ of type $\mathcal{O}_{6}$ \\

     10 & $L_{01} \cup C$ ($L_{01}$ line, $C$ conic)
        & line $+$ conic (deg~1+2)
        & --- \\

     11 & $\{P_{1}\} \cup \{P_{2}\}\cup \{P_{3}\}$
        & three points
        & three points $P_{i}$ of type $\mathcal{C}_{cub}$\\

     12 & $L_{56} \cup C$ ($L_{56}$ line, $C$ a CI$(2,2)$)
        & line $+$ quartic curve (deg~1+4)
        & --- \\

     13 & $C_1 \cup C_2$ (disjoint)
        & two pairs of intersecting lines (deg~1+1+1+1)
        & --- \\

     14 & $L_{01} \cup C$ ($L_{01}$ line, $C$ a CI$(2,2)$)
        & line $+$ quartic curve (deg~1+4)
        & --- \\

     15 & $C_1 \cup C_2 \cup L$ (two CI$(2,2)$s + line)
        & two quartics $+$ line (deg~4+4+1)
        & --- \\

     16 & $\{P\} \cup \{Q\}$
        & two points
        & $P,Q$ of type $\mathsf{E}_{20}$ (same modulus) \\

     17 & $C \cup L$ ($C$ smooth conic, $L$ line)
        & conic $+$ line (deg~2+1)
        & --- \\

     18 & $L_1 \cup L_2$ (two disjoint lines)
        & two disjoint lines (deg~1+1)
        & --- \\

     19 & $C \cup \{P\}$ ($C$ smooth conic)
        & conic (deg~2)
        & one point $P$ of type $\mathsf{E}_{18}$ \\

     20 & $C \cup L$ ($C$ elliptic curve, $L$ line)
        & elliptic curve $+$ line (deg~4+1)
        & --- \\

     21 & $\{P_{1}\} \cup \{P_{2}\}\cup \{P_{3}\}$
        & three points
        & three points $P_{i}$ of type $\mathcal{C}_{cub}$\\
    \bottomrule
  \end{tabular}
\end{table}

\begin{table}[H]
  \centering
  \small
  \renewcommand{\arraystretch}{1.3}
   \caption{List of isolated singularities}
  \label{tab:sing-recap}
  \begin{tabular}{c c l}
    \toprule
    Symbol & Type & Defining Equation (Normal Form) \\
    \midrule
    $\mathsf{E}_{19}$ & $\mathrm{QH}(9,6,5;24)_{19}$ & $X^{2}Y + Y^{4} + XZ^{3} = 0$ \\
    $\mathsf{E}_{21}$ & $\mathrm{QH}(2,3,5;12)_{21}(t)$ & $t X^{6} + X^{3}Y^{2} + X^{2}YZ + XZ^{2} + Y^{4} = 0$ \\
    $\mathsf{E}_{18}$ & $\mathrm{QH}(3,3,4;12)_{18}(t)$ & $Z^{3} + X^{4} + t X^{2} Y^{2} + Y^{4} = 0$ \\
    $\mathcal{O}_{6}$ & $\mathrm{QH}(1,1;6)_{25}$ & $\prod_{i=1}^{6} (X - \lambda_{i}Y) = 0$ \\
    $\mathcal{C}_{cub}$ & $\mathrm{QH}(1,1,1,1;3)_{16}([c])$ & $\sum_{i=0}^{4} c_{i}L_{i}^{3} = 0$ \\
    $\mathsf{E}_{20}$ & $\mathrm{QH}(1,2,2;6)_{20}(t)$ & $\scriptstyle{Y^3 + Z^3 + X^6  + t_1 X^2 Y^2 + t_2 X^2 Y Z + t_3 Y^2 Z + t_4 X^4 Y + t_5 X^4 Z=0}$\\
    \bottomrule
  \end{tabular}
\end{table}

\newpage

\section{The minimal exponents}\label{sec:minimal-exponents}

\subsection{Minimal exponents of isolated singularities}

In Definitions~5.1--5.6 we introduced six quasi-homogeneous isolated hypersurface singularities that occur as the isolated points on the GIT boundary.  The next theorem records a uniform numerical feature of these singularities, and explains why they sit exactly at the critical value in Park's stability criterion.

\begin{thm}\label{thm:minexp-73}
Let $(X,x)$ be one of the six isolated boundary singularities from Definitions~5.1--5.6.
Then the local minimal exponent is
\[
\widetilde{\alpha}_x(X)=\frac{7}{3}.
\]
\end{thm}

\begin{proof}
By the holomorphic splitting lemma, the local defining equation of $X$ at an isolated singular point $x$
is analytically equivalent to
\[
u_1^2+\cdots+u_{6-c}^2 \;+\; g(z_1,\dots,z_c),
\]
where $c=\mathrm{corank}_x(X)$ and $g$ is one of the weighted-homogeneous germs from
Definitions~5.1--5.6

By the Thom-Sebastiani type theorem for minimal exponents (see, for example, \cite[Theorem 2.3]{Par25}),
\[
\widetilde{\alpha}_x(X)
=\widetilde{\alpha}_0\!\left(u_1^2+\cdots+u_{6-c}^2\right)+\widetilde{\alpha}_0(g)
=\frac{6-c}{2}+\widetilde{\alpha}_0(g).
\]
For a weighted-homogeneous isolated hypersurface singularity of type $\mathrm{QH}(w_1,\dots,w_c;D)$,
we have $\widetilde{\alpha}_0(g)=\frac{w_1+\cdots+w_c}{D}$.  In our list, this yields
\[
\widetilde{\alpha}_0(g)=
\begin{cases}
\frac56, & g\in\{\mathrm{QH}(9,6,5;24)_{19},\,\mathrm{QH}(2,3,5;12)_{21}(t),\,\mathrm{QH}(3,3,4;12)_{18}(t),\,\mathrm{QH}(1,2,2;6)_{20}(t)\},\\[2pt]
\frac13, & g=\mathrm{QH}(1,1;6)_{25},\\[2pt]
\frac43, & g=\mathrm{QH}(1,1,1,1;3)_{16}([c]).
\end{cases}
\]
The corresponding coranks are $c=3,2,4$ respectively (cf.\ the definitions), hence in each case
\[
\widetilde{\alpha}_x(X)=\frac{6-c}{2}+\widetilde{\alpha}_0(g)=\frac{7}{3}.
\]
\end{proof}

\begin{remark}[Significance via Park's threshold]
For cubic fivefolds $(n,d)=(6,3)$, Park's GIT criterion gives the sharp threshold
$\frac{n+1}{d}=\frac{7}{3}$:
a degree-$d$ hypersurface $X\subset\mathbb{P}^{n}$ is GIT stable (resp.\ semistable) if
$\widetilde{\alpha}(X)>\frac{n+1}{d}$ (resp.\ $\ge \frac{n+1}{d}$) \cite[Theorem A]{Par25}.
Hence, if $X$ has a point with local minimal exponent $7/3$, then
$\widetilde{\alpha}(X)\le 7/3$, so $X$ cannot lie in the strictly stable locus.
This explains why the isolated singularities in Theorem~\ref{thm:minexp-73}
occur precisely on the GIT boundary.

Moreover, Park observed that the classical explicit GIT analyses for cubic hypersurfaces
with $n\le 5$ can be reformulated uniformly in terms of the same threshold
$\frac{n+1}{3}$ for the global minimal exponent \cite[Remark 7.6]{Par25};
Theorem~\ref{thm:minexp-73} shows that, for cubic fivefolds ($n=6$), the equality
\[
\widetilde{\alpha}_x(X)=\frac{7}{3}=\frac{n+1}{3}
\]
is attained along the strictly semistable boundary.
\end{remark}

\begin{definition}[Extremal and $7/3$-singularities]\label{def:extremal-sing}
We call the six isolated analytic types appearing in Theorem~\ref{thm:minexp-73}
\emph{extremal cubic fivefold singularities}.
More generally, an isolated cubic fivefold singularity with local minimal exponent $7/3$
is called a \emph{$7/3$-singularity}.
\end{definition}

\subsection{The global minimal exponents of general members of the boundary families}

For families whose singular locus $\mathrm{Sing}(X_k)$ has positive-dimensional components (curves or lines), we must compute the local minimal exponent both at a general point and at the special points along these strata in order to determine the global minimal exponent
\[
\widetilde{\alpha}(X_k) \;=\; \min_{x \in \mathrm{Sing}(X_k)} \widetilde{\alpha}_x(X_k).
\]

Our computations for all $21$ closed-orbit normal forms reveal a striking uniformity that matches Park's threshold:

\begin{thm}\label{thm:global-minimal-exponents}
For each $k \in \{1, \dots, 21\}$, the global minimal exponent of a general strictly semistable closed-orbit representative
$X_k = V(\nf_{k}) \subset \mathbb{P}^6$ in the component $\Phi_{k}$
is
\[
\widetilde{\alpha}(X_k) \;=\; \frac{7}{3}.
\]
\end{thm}

\begin{proof}
We prove the theorem by a uniform case analysis organized by the structure of the one-dimensional components of $\mathrm{Sing}(X_{k})$. More precisely, for each $k$ with $\dim \mathrm{Sing}(X_{k})=1$, we classify every irreducible curve component $\Gamma\subset \mathrm{Sing}(X_{k})$ into one of finitely many pattern types according to the behavior of the local minimal exponent $p\mapsto\widetilde{\alpha}_{p}(X_{k})$ along $\Gamma$ (see Table~\ref{tab:me-pattern-index-noA0}). 

In each pattern type, the minimum of $\widetilde{\alpha}_{p}(X_{k})$ along $\Gamma$ equals $7/3$. For types A1 and A2, this minimum is attained at an explicitly described finite subset of special points, whereas for types B and C, the minimal exponent is identically $7/3$ everywhere along $\Gamma$. Consequently, $\min_{p\in\Gamma}\widetilde{\alpha}_{p}(X_{k})=\frac{7}{3}$ for every curve component $\Gamma\subset \mathrm{Sing}(X_{k})$.

If $\mathrm{Sing}(X_k)$ has only isolated singular points (namely $k \in \{5,11,16,21\}$), then $\widetilde{\alpha}(X_k)=7/3$ follows directly from the computation of the six isolated extremal singularity types recorded in Table~\ref{tab:sing-recap}.
Taking the minimum over all singular points, we obtain $\widetilde{\alpha}(X_k)=7/3$ for every $k=1,\dots,21$.
\end{proof}

\begin{table}[H]
\centering
\scriptsize
\setlength{\tabcolsep}{5pt}
\renewcommand{\arraystretch}{1.25}
\resizebox{\textwidth}{!}{%
\begin{tabular}{l|l|l|l|l}
\toprule
component type
& Type A1
& Type A2
& Type B
& Type C \\
\midrule

line
& \begin{tabular}[t]{@{}l@{}}
$k=2$: $L_{01}$\\
$k=3$: $L_{01}$\\
$k=4$: $L_{01},\,L_{56}$\\
$k=6$: $L_{56}$\\
$k=10$: $L_{01}$\\
$k=12$: $L_{56}$\\
$k=17$: $L_{56}$
\end{tabular}
& --
& $k=18$: $L_{01},\,L_{56}$
& \begin{tabular}[t]{@{}l@{}}
$k=9$: $L_{01}$\\
$k=14$: $L_{01}$\\
$k=15$: $L$\\
$k=20$: $L$
\end{tabular}
\\
\midrule

smooth conic
& \begin{tabular}[t]{@{}l@{}}
$k=1$: $C$\\
$k=7$: $C$
\end{tabular}
& --
& \begin{tabular}[t]{@{}l@{}}
$k=10$: $C$\\
$k=17$: $C$
\end{tabular}
& \begin{tabular}[t]{@{}l@{}}
$k=8$: $C$\\
$k=19$: $C$
\end{tabular}
\\
\midrule

twisted quartic
& \begin{tabular}[t]{@{}l@{}}
$k=6$: $C$\\
$k=12$: $C$
\end{tabular}
& --
& --
& --
\\
\midrule

elliptic quartic
& --
& --
& \begin{tabular}[t]{@{}l@{}}
$k=14$: $C$\\
$k=20$: $C$
\end{tabular}
& --
\\
\midrule

reducible conic
& $k=13$: $C_1,\,C_2$
& --
& --
& --
\\
\midrule

CI(2,2)
& \begin{tabular}[t]{@{}l@{}}
$k=2$: $C$\\
$k=3$: $C$
\end{tabular}
& $k=15$: $C_1,\,C_2$
& --
& --
\\

\bottomrule
\end{tabular}}
\caption{Index of one-dimensional singular-locus components by minimal-exponent pattern type.}
\label{tab:me-pattern-index-noA0}
\end{table}

\subsubsection*{Pattern types for one-dimensional singular components}

Let $X \subset \mathbb{P}^6$ be one of the closed-orbit normal forms $X_k=V(\nf_k)$, and let
$\Gamma \subset \Sing(X)$ be an irreducible curve component.
We classify $\Gamma$ according to the behavior of the function
\[
\Gamma \ni p \longmapsto \widetilde{\alpha}_p(X)
\]
along $\Gamma$.
In all cases appearing in this paper, the only values that occur on $\Gamma$ are $5/2$ and $7/3$.
We say that $\Gamma$ is of one of the following pattern types.

\medskip
\noindent\textbf{Type A1 (one special point).}
$\Gamma$ is of Type~A1 if there exists a unique point $p_0\in \Gamma$ such that
\[
\widetilde{\alpha}_{p_0}(X)=\frac{7}{3},
\qquad
\widetilde{\alpha}_{p}(X)=\frac{5}{2}\ \text{ for all }p\in \Gamma\setminus\{p_0\}.
\]
Equivalently, $\widetilde{\alpha}_p(X)=5/2$ at a general point of $\Gamma$, and the minimum along $\Gamma$ is attained at a single
explicitly described ``special'' point $p_0$.

\medskip
\noindent\textbf{Type A2 (two special points).}
$\Gamma$ is of Type~A2 if there exist exactly two points $p_1,p_2\in \Gamma$ such that
\[
\widetilde{\alpha}_{p_1}(X)=\widetilde{\alpha}_{p_2}(X)=\frac{7}{3},
\qquad
\widetilde{\alpha}_{p}(X)=\frac{5}{2}\ \text{ for all }p\in \Gamma\setminus\{p_1,p_2\}.
\]
Equivalently, $\widetilde{\alpha}_p(X)=5/2$ at a general point of $\Gamma$, and the minimum along $\Gamma$ is attained at two
explicitly described special points.

\medskip
\noindent\textbf{Type B (uniform $7/3$ without rank drop).}
$\Gamma$ is of Type~B if
\[
\widetilde{\alpha}_{p}(X)=\frac{7}{3}\quad\text{for all }p\in \Gamma,
\]
and moreover the transverse quadratic part to $\Gamma$ has constant rank along $\Gamma$
(i.e.\ there are no rank-drop points of the Hessian along $\Gamma$).
In particular, the minimum of $\widetilde{\alpha}_p(X)$ on $\Gamma$ equals $7/3$ and is attained everywhere on $\Gamma$.

\medskip
\noindent\textbf{Type C (uniform $7/3$ with rank drop).}
$\Gamma$ is of Type~C if
\[
\widetilde{\alpha}_{p}(X)=\frac{7}{3}\quad\text{for all }p\in \Gamma,
\]
but the transverse quadratic part degenerates at finitely many points of $\Gamma$
(equivalently, the Hessian rank drops along $\Gamma$ at a finite set).
Thus Type~C shares the same uniform value $7/3$ as Type~B, but differs in that certain special points on $\Gamma$
exhibit additional quadratic degenerations, which nevertheless do not change the minimal exponent.

\medskip
In all cases in this paper, every curve component $\Gamma \subset \Sing(X_k)$ is of one of the types A1, A2, B, or C;
see Table~\ref{tab:me-pattern-index-noA0}.

\section{Adjacency relations among strictly semistable components}\label{sec:adjacency}

In this section we record the \emph{codimension-one wall adjacency} among the closed strata
$\{\Phi_k\}_{k=1}^{21}\subset \mathbb{P}(W)^{\mathrm{ss}}$, where $W=\Sym^3\mathbb{C}^7$.
Throughout Section~\ref{sec:adjacency}, we use the term ``adjacent'' \emph{only} in the sense of
\emph{wall adjacency} (i.e.\ codimension-one incidence in the Hilbert--Mumford weight fan).

\smallskip
\noindent\textbf{Sign convention and why we match $\pm r_j$.}
A wall is determined by a hyperplane, hence it is insensitive to the orientation of a normal vector.
Consequently, when we compare slices of the form
\[
I(r)_{\ge 0}\cap I(r_k)_{=0}\subset I(r_k)_{=0},
\]
the same geometric wall may be represented either by $r$ or by $-r$.
For this reason, in the computational matching against the reduced list
$r_1,\dots,r_{21}$ (Section~2), we always test \emph{both} signs $\pm r_j$.
This eliminates the spurious ``one-way'' phenomena that occur if one only matches the fixed
sign representatives $r_j$.

\smallskip
\noindent
For background on wall-crossing in GIT and Kirwan's stratification, see
\cite{Kir84,DH98,Tha96}.

\begin{definition}[Adjacency via codimension-one walls]\label{def:adjacency}
Let $\Phi_i,\Phi_j$ be closed strata in the strictly semistable locus.
We say that $\Phi_i$ and $\Phi_j$ are \emph{adjacent} if the associated maximal cones
$I(r_i)_{\ge 0}$ and $I(r_j)_{\ge 0}$ meet along a codimension-one face (a \emph{wall})
in the Hilbert--Mumford weight fan.
Equivalently, there exists a codimension-one wall on which the Hilbert--Mumford numerical functions
for the corresponding maximally destabilizing $1$-PS's tie, and the wall separates the chambers
corresponding to $r_i$ and $r_j$.
\end{definition}

\subsection{Maximal slices and the computational criterion}

Fix an index $k$.
Inside the hyperplane $I(r_k)_{=0}\subset I$ we consider subsets of the form
\[
I(r)_{\ge 0}\cap I(r_k)_{=0}\qquad (r\in\mathbb{Z}^{7}_{(0)},\ r\not\parallel r_k).
\]
Such a subset corresponds to a codimension-one face of the cone $I(r_k)_{\ge 0}$
precisely when it is \emph{inclusion-maximal} among all subsets of this form.
Following the terminology used in the accompanying Sage computations, we call these inclusion-maximal
subsets the \emph{maximal slices} in $I(r_k)_{=0}$.

\smallskip
\noindent\textbf{Directed slice-matching relation.}
For bookkeeping, we introduce a directed relation on $\{1,\dots,21\}$:
we write $k\rightsquigarrow j$ if \emph{some} maximal slice in $I(r_k)_{=0}$ equals
\[
I(\pm r_j)_{\ge 0}\cap I(r_k)_{=0}
\]
for at least one choice of sign.
(Here $\pm r_j$ indicates that we match against both orientations of $r_j$, as explained above.)

\smallskip
\noindent
By Definition~\ref{def:adjacency}, wall adjacency is symmetric. In our combinatorial model this means:
\[
\Phi_i\text{ and }\Phi_j\text{ are adjacent}
\quad\Longleftrightarrow\quad
(i\rightsquigarrow j)\ \text{and}\ (j\rightsquigarrow i).
\]
In other words, the adjacency graph is obtained from the directed slice-matching digraph by retaining
only the mutual pairs.

\smallskip
\noindent\textbf{Implementation.}
We compute maximal slices by the same linear-algebraic method as in Section~2, adapted to the
codimension-one setting: we enumerate $4$-tuples of points in $I(r_k)_{=0}$, extract candidate normals
from the right kernel (modulo $r_k$), build the corresponding slices, and keep only inclusion-maximal
ones. We then match each maximal slice against the list $\{\pm r_1,\dots,\pm r_{21}\}$.
(See the accompanying Sage scripts and logs for the full output.)

\subsection{Codimension-one adjacency graph}

The main output of the codimension-one computation is the following.

\begin{thm}[Codimension-one wall adjacency]\label{thm:adjacency}
Among the closed strata $\Phi_k$ in the strictly semistable locus, the wall-adjacency graph
(codimension-one adjacency in the sense of Definition~\ref{def:adjacency}) has $21$ vertices and
$56$ edges. Equivalently, for each $k$ the set
\[
N(k)\ :=\ \{\,j\neq k \mid \Phi_k\text{ is adjacent to }\Phi_j\,\}
\]
is given by Table~\ref{tab:adjacency-codim1}.
\end{thm}

\begin{table}[t]
\centering
\caption{Codimension-one wall adjacency: for each $k$, the neighbor set $N(k)$ of indices $j$ such that
$\Phi_k$ and $\Phi_j$ are adjacent (Definition~\ref{def:adjacency}).}
\label{tab:adjacency-codim1}
\renewcommand{\arraystretch}{1.15}
\begin{tabular}{c|p{0.36\textwidth} c|p{0.36\textwidth}}
$k$ & $N(k)$ & $k$ & $N(k)$\\\hline
$1$  & $3,5,7,10$
& $12$ & $3,4,5,6,7,15,17$\\
$2$  & $3,4,5,9$
& $13$ & $4,8,18,19$\\
$3$  & $1,2,4,5,9,10,12$
& $14$ & $9,10,11,18$\\
$4$  & $2,3,5,6,12,13,18$
& $15$ & $6,8,12,17,18,20$\\
$5$  & $1,2,3,4,6,7,12$
& $16$ & $8,18,19$\\
$6$  & $4,5,12,15$
& $17$ & $7,12,15,18,19,20,21$\\
$7$  & $1,5,12,17$
& $18$ & $4,9,10,13,14,15,16,17,20$\\
$8$  & $10,11,13,15,16,19$
& $19$ & $8,9,13,16,17,21$\\
$9$  & $2,3,10,14,18,19$
& $20$ & $15,17,18,21$\\
$10$ & $1,3,8,9,11,14,18$
& $21$ & $17,19,20$\\
$11$ & $8,10,14$
&  & \\
\end{tabular}
\end{table}

\begin{figure}[t]
\centering
\includegraphics[width=0.95\textwidth]{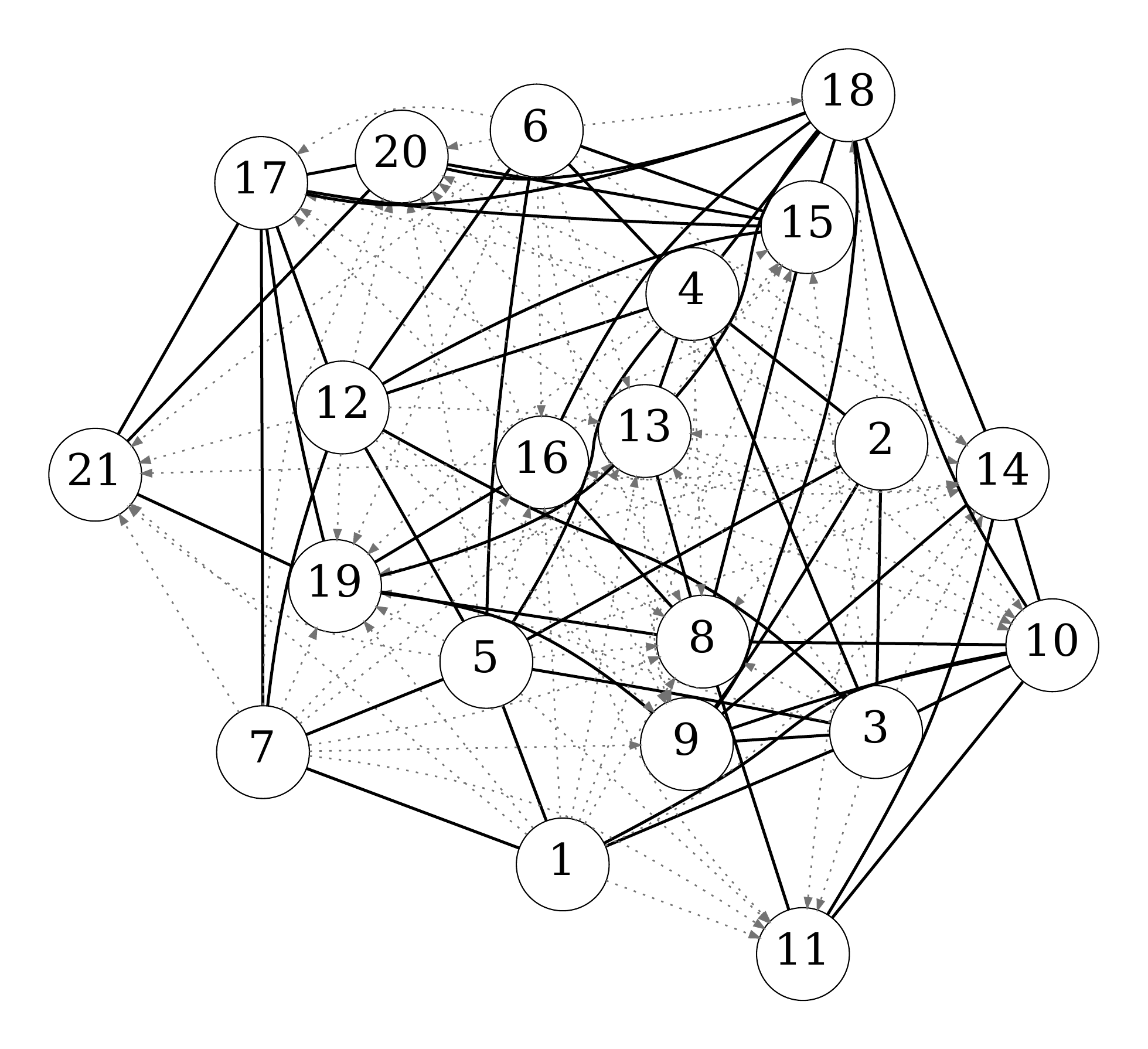}
\caption{Codimension-one slice-matching graph for the strata $\Phi_1,\dots,\Phi_{21}$.
Vertices are indexed by $k$ and represent $\Phi_k$.
A dotted arrow $i\to j$ indicates a \emph{one-sided} slice match: there exists an inclusion-maximal
slice $S\subset I(r_i)_{=0}$ of the form $S=I(\pm r_j)_{\ge 0}\cap I(r_i)_{=0}$, while no such match
occurs in the reverse direction.
A solid (undirected) edge between $i$ and $j$ indicates that slice matches occur in both directions
($i\to j$ and $j\to i$); equivalently, $\Phi_i$ and $\Phi_j$ are adjacent in the sense of
Definition~\ref{def:adjacency}.}
\label{fig:adjacency-codim1}
\end{figure}

\begin{remark}[One-sided slice matches and higher codimension]\label{rem:one-sided}
In the directed slice-matching digraph one also encounters \emph{one-sided} arrows
$k\rightsquigarrow j$ without the reverse arrow $j\rightsquigarrow k$.
These do \emph{not} represent codimension-one wall adjacency in the sense of
Definition~\ref{def:adjacency}; rather, they indicate that a facet of $I(r_k)_{\ge 0}$
is detected by $\pm r_j$ when restricted to $I(r_k)_{=0}$, while from the $j$-side the same incidence
is not codimension-one. Empirically, such one-sided phenomena are accounted for by higher-codimension
incidence among faces (codimension $\ge 2$), as can be explored by extending the same slice computation
to intersections of multiple hyperplanes $I(r_{k_1})_{=0}\cap\cdots\cap I(r_{k_c})_{=0}$.
\end{remark}

\FloatBarrier

\section{Non-inclusions and Filters}\label{sec:non-inclusion}

In this section, we prove that there are no inclusion relations among the
$21$ boundary families of strictly semistable cubic fivefolds constructed in the
previous sections. Concretely, for any distinct indices $k,\ell\in\{1,\dots,21\}$ we show
\[
\Phi_k \not\subseteq \Phi_\ell.
\]
Equivalently, we must rule out all $21\cdot 20 = 420$ ordered pairs $(k,\ell)$ with $k\neq \ell$.

Rather than checking these pairs directly, we use a \emph{sieve} (or ``filter'') method:
we impose a sequence of \emph{necessary} conditions for an inclusion
$\Phi_k\subseteq \Phi_\ell$.
Each condition is monotone under specialization (typically by upper semicontinuity),
so any pair violating the condition is discarded.
Applying these filters successively shrinks the candidate list until it becomes empty.

The filters are applied in the following order.

\begin{enumerate}
  \item \textbf{Filter 1: Dimension} (\S8.1).
  If $\Phi_k\subseteq \Phi_\ell$ and $k\neq \ell$, then necessarily
  \[
    \dim \Phi_k \;<\; \dim \Phi_\ell.
  \]

  \item \textbf{Filter 2: Apolar Betti numbers} (\S8.2).
  Within a fixed Hilbert-function stratum, the graded Betti numbers of the apolar algebra
  can only increase under specialization; hence an inclusion forces the entrywise Betti-table
  inequalities
  \[
    \beta_{i,j}\!\bigl(A_{f_k}\bigr)\ \ge\ \beta_{i,j}\!\bigl(A_{f_\ell}\bigr)
    \qquad\text{for all }(i,j).
  \]

  \item \textbf{Filter 3: Singular-locus Hilbert functions} (\S8.3).
  For a cubic $f$, the Hilbert function of the singular scheme $\Sing(X_f)$ is upper semicontinuous
  in families, so an inclusion forces the degreewise inequalities
  \[
    \HF_{\Sing(X_{f_k})}(d)\ \ge\ \HF_{\Sing(X_{f_\ell})}(d)
    \qquad\text{for all }d\ge 0.
  \]

  \item \textbf{Filter 4: Singular-locus degrees (leading term)} (\S8.4).
  The scheme-theoretic degree of the one-dimensional part of the singular locus is monotonic under specialization. 
  An inclusion therefore forces the inequality of the leading coefficients of the Hilbert polynomials:
  \[
    \deg_1\Sing(X_{f_k})\ \ge\ \deg_1\Sing(X_{f_\ell}).
  \]

  \item \textbf{Filter 5: Minimal exponents and limits of 1-cycles} (\S8.5).
  Under a flat specialization $X_\ell \rightsquigarrow X_k$, the fundamental $1$-cycle of the positive-dimensional singular locus specializes to an effective $1$-cycle supported on $\Sing(X_k)$. By the lower semicontinuity of the local minimal exponent $\widetilde{\alpha}$, this limit cycle must be supported on components having a generic minimal exponent no greater than that of the original curve.

  \item \textbf{Filter 6: Spectra of isolated singularities and generic stabilizers} (\S8.5.1).
  To eliminate the final remaining candidate pairs, we apply two fine obstructions. First, the Steenbrink spectrum of an isolated hypersurface singularity satisfies a strong semicontinuity property over half-open intervals, which restricts possible specializations. Second, an inclusion $\Phi_k\subseteq \Phi_\ell$ requires that the connected component of the generic stabilizer of $\Phi_\ell$ must be conjugate to a subgroup of the generic stabilizer of $\Phi_k$ in $G=\SL(7)$.
\end{enumerate}

\subsection{Filter 1: Dimension}
\label{subsec:filter1-dimension}

We begin with the most basic necessary condition coming from dimension
monotonicity.

\begin{lemma}[Dimension monotonicity]
\label{lem:dim-monotonicity}
Let $Y \subset Z$ be irreducible closed subsets of an algebraic variety.
If the inclusion is proper ($Y \subsetneq Z$), then $\dim Y < \dim Z$.
Equivalently, if $\dim Y \ge \dim Z$, then $Y \not\subsetneq Z$.
\end{lemma}

\begin{proof}
If $Y \subsetneq Z$ are irreducible, then any maximal chain of irreducible
closed subsets in $Y$ extends to a strictly longer chain in $Z$, so the Krull
dimension strictly increases.  (See any standard reference on dimension theory
in algebraic geometry.)
\end{proof}

In Section~4 (Table~2) we computed the dimensions of the $21$ families
$\Phi_k$.  Since we are only interested in \emph{proper} inclusions
$\Phi_k \subsetneq \Phi_\ell$ with $k \neq \ell$, Lemma~\ref{lem:dim-monotonicity}
implies the strict inequality
\begin{equation}
\label{eq:filter1-dim-ineq}
\Phi_k \subsetneq \Phi_\ell \quad \Longrightarrow \quad \dim \Phi_k < \dim \Phi_\ell .
\end{equation}
Thus, any ordered pair $(k,\ell)$ with $\dim \Phi_k \ge \dim \Phi_\ell$ is
discarded immediately.

For bookkeeping, set
\[
\mathcal{D}_d \;:=\; \{\,k \in \{1,\dots,21\} \mid \dim \Phi_k = d\,\}.
\]
From Table~2 we have the partition
\begin{align*}
\mathcal{D}_6 &= \{16,18\},\\
\mathcal{D}_4 &= \{11,21\},\\
\mathcal{D}_2 &= \{4,5,8,9,10,13,15,17,19\},\\
\mathcal{D}_1 &= \{14,20\},\\
\mathcal{D}_0 &= \{1,2,3,6,7,12\}.
\end{align*}

There are $21\cdot 20 = 420$ ordered pairs $(k,\ell)$ with $k\neq \ell$.
After imposing~\eqref{eq:filter1-dim-ineq}, the \emph{only} dimension jumps
that can survive are
\[
0\to 1,2,4,6;\qquad 1\to 2,4,6;\qquad 2\to 4,6;\qquad 4\to 6.
\]
Hence the number of pairs surviving Filter~1 is
\begin{align*}
|\mathcal{D}_0|\,(|\mathcal{D}_1|+|\mathcal{D}_2|+|\mathcal{D}_4|+|\mathcal{D}_6|)
&\;+\;
|\mathcal{D}_1|\,(|\mathcal{D}_2|+|\mathcal{D}_4|+|\mathcal{D}_6|)\\
&\;+\;
|\mathcal{D}_2|\,(|\mathcal{D}_4|+|\mathcal{D}_6|)
\;+\;
|\mathcal{D}_4|\,|\mathcal{D}_6|\\[2pt]
&= 6\cdot 15 \;+\; 2\cdot 13 \;+\; 9\cdot 4 \;+\; 2\cdot 2\\
&= 156.
\end{align*}
Therefore Filter~1 eliminates $420-156=264$ pairs, and the remaining $156$
candidates are passed to Filter~2.

\subsection{Filter 2: Apolar Betti numbers}\label{subsec:filter2-apolar}

Let
\[
S := \mathbb{C}[x_0,\dots,x_6],\qquad
R := \mathbb{C}[\partial_0,\dots,\partial_6],
\]
where $R$ acts on $S$ by differentiation $\partial_i=\frac{\partial}{\partial x_i}$.
For a cubic form $f\in S_3$, the apolar ideal is
\[
\Ann(f) := \{D\in R \mid D\cdot f = 0\}\subset R,
\]
and the corresponding apolar algebra is the graded Artinian Gorenstein algebra
\[
A_f := R/\Ann(f).
\]
Write $\beta_{i,j}(A_f)$ for the graded Betti numbers of the minimal graded free resolution
of $A_f$ over $R$.

\begin{lemma}[Upper semicontinuity of graded Betti numbers]\label{lem:usc-betti}
Let $\mathcal{A}$ be a flat family of finitely generated graded $R$-modules over an integral base $B$.
Then for every pair $(i,j)$ the function
\[
b\longmapsto \beta_{i,j}(\mathcal{A}_b)\qquad (b\in B)
\]
is upper semicontinuous. In particular, if $b_0$ is a specialization of the generic point $b_\eta$,
then $\beta_{i,j}(\mathcal{A}_{b_0})\ge \beta_{i,j}(\mathcal{A}_{b_\eta})$ for all $(i,j)$.
\end{lemma}

\begin{proof}
For each $(i,j)$ one has $\beta_{i,j}(M)=\dim_{\mathbb{C}}\Tor_i^R(M,\mathbb{C})_j$.
Moreover $\Tor_i^R(\mathcal{A},\mathbb{C})_j$ is the fiber of a coherent sheaf on $B$
obtained from a graded free resolution of $\mathcal{A}$; fiber dimensions of coherent sheaves
are upper semicontinuous, and flatness ensures compatibility with specialization.
\end{proof}

\begin{filter}[Apolar Betti numbers]\label{filter:apolar-betti}
Fix $k\neq \ell$. Suppose $\Phi_k\subseteq \Phi_\ell$.
Choose generic points $f_k\in \Phi_k$ and $f_\ell\in \Phi_\ell$ such that $A_{f_k}$ and $A_{f_\ell}$
lie in the same (Artinian) Hilbert-function stratum. Then
\[
\beta_{i,j}(A_{f_k})\ge \beta_{i,j}(A_{f_\ell})\qquad \text{for all }(i,j).
\]
Consequently, if there exists $(i,j)$ with $\beta_{i,j}(A_{f_k})<\beta_{i,j}(A_{f_\ell})$, then
$\Phi_k\not\subseteq \Phi_\ell$.
\end{filter}

\paragraph{Computed generic apolar Betti tables.}
For each family $\Phi_k$ we computed the graded Betti table of $A_f$ for a numerically generic element
$f\in \Phi_k$ (by fixing the weight--$0$ normal form and adding random positive-weight terms; in each case
we verified that $\HF(A_f)=(1,7,7,1)$). The resulting Betti tables fall into the following types.

For $m\in\{0,4,6,8,10,12\}$ set
\[
B(m):=
\begin{array}{c|cccccccc}
      & 0&1&2&3&4&5&6&7\\\hline
\text{total} & 1&21&64&70+m&70+m&64&21&1\\
0 & 1&.&.&.&.&.&.&.\\
1 & .&21&64&70&m&.&.&.\\
2 & .&.&.&m&70&64&21&.\\
3 & .&.&.&.&.&.&.&1
\end{array}.
\]
Then we obtain:
\[
\begin{aligned}
&B(0)\ \text{occurs for } k\in\{4,13,18\},\\
&B(4)\ \text{occurs for } k\in\{2,5,6,7,12,15,16,17,19\},\\
&B(6)\ \text{occurs for } k\in\{1,3\},\\
&B(8)\ \text{occurs for } k\in\{9,10\},\\
&B(10)\ \text{occurs for } k=8,\\
&B(12)\ \text{occurs for } k\in\{14,20\}.
\end{aligned}
\]
The remaining two cases are:
\[
B_{21}:=
\begin{array}{c|cccccccc}
      & 0&1&2&3&4&5&6&7\\\hline
\text{total} & 1&21&65&87&87&65&21&1\\
0 & 1&.&.&.&.&.&.&.\\
1 & .&21&64&71&16&1&.&.\\
2 & .&.&1&16&71&64&21&.\\
3 & .&.&.&.&.&.&.&1
\end{array},
\qquad
B_{11}:=
\begin{array}{c|cccccccc}
      & 0&1&2&3&4&5&6&7\\\hline
\text{total} & 1&21&67&99&99&67&21&1\\
0 & 1&.&.&.&.&.&.&.\\
1 & .&21&64&73&26&3&.&.\\
2 & .&.&3&26&73&64&21&.\\
3 & .&.&.&.&.&.&.&1
\end{array}.
\]
In particular, these Betti tables are totally ordered entrywise:
\[
B(0)\ \le\ B(4)\ \le\ B(6)\ \le\ B(8)\ \le\ B(10)\ \le\ B(12)\ \le\ B_{21}\ \le\ B_{11}.
\]
Therefore, for any pair $(k,\ell)$ we can discard $\Phi_k\subseteq \Phi_\ell$ as soon as the Betti type
of $k$ is strictly smaller than the Betti type of $\ell$.

\begin{proposition}[Effect of Filter 2]\label{prop:filter2-effect}
Applying Filter~\ref{filter:apolar-betti} to the $156$ ordered pairs surviving Filter~1 eliminates $66$
further pairs, leaving $90$ candidates for the subsequent filters. More precisely, the following $66$
pairs are discarded by the apolar Betti criterion:
\[
\begin{aligned}
&k\in \{1,2,3,7,12\},\ \ \ell\in \{8,9,10,11,14,20,21\};\\
&k=6,\ \ \ell\in \{8,9,10,11,14,20,21\};\\
&k\in \{4,13\},\ \ \ell\in \{11,16,21\};\\
&k\in \{5,8,9,10,14,15,17,19,20\},\ \ \ell\in \{11,21\}.
\end{aligned}
\]
\end{proposition}

\begin{proof}
By Lemma~\ref{lem:usc-betti}, in a specialization within a fixed Hilbert-function stratum the graded
Betti numbers can only increase. Since $\HF(A_f)=(1,7,7,1)$ for all computed generic representatives,
the comparison is valid across all families. Using the explicit Betti types listed above (which form a
single chain), any candidate pair $(k,\ell)$ with Betti type$(k) <$ Betti type$(\ell)$ is excluded.
Intersecting this condition with the $156$ pairs from Filter~1 yields exactly the list displayed, consisting
of $66$ pairs.
\end{proof}

\subsection{Filter 3: Hilbert functions of singular loci}\label{subsec:filter3-sing}

Keep the notation $S=\mathbb{C}[x_0,\dots,x_6]$. For a cubic form $f\in S_3$, set
$X_f:=V(f)\subset \mathbb{P}^6$ and recall that the (projective) singular scheme is cut out by the
saturated Jacobian ideal
\[
J(f):=\Bigl(\frac{\partial f}{\partial x_0},\dots,\frac{\partial f}{\partial x_6}\Bigr):(x_0,\dots,x_6)^\infty
\subset S,
\qquad
\Sing(X_f)=V\!\bigl(J(f)\bigr)\subset \mathbb{P}^6.
\]
For $d\ge 0$ define the singular-locus Hilbert function in degree $d$ by
\[
\HF_{\Sing(X_f)}(d):=\dim_{\mathbb{C}}(S/J(f))_d.
\]

\begin{lemma}[Upper semicontinuity of Hilbert functions in each degree]\label{lem:usc-hf}
Let $B$ be an integral variety and let $Z\subset \mathbb{P}^6\times B$ be a family of closed subschemes
with ideal sheaf $\mathcal{I}_Z$. Then for every $d\ge 0$ the function
\[
b\longmapsto \dim_{\mathbb{C}}(S/I_b)_d\qquad (b\in B),
\]
where $I_b\subset S$ is the saturated homogeneous ideal of the fiber $Z_b\subset \mathbb{P}^6$,
is upper semicontinuous. In particular, if $b_0$ is a specialization of the generic point $b_\eta$,
then $\HF_{Z_{b_0}}(d)\ge \HF_{Z_{b_\eta}}(d)$ for all $d\ge 0$.
\end{lemma}

\begin{proof}
Fix $d\ge 0$. The vector space $(S/I_b)_d$ is the fiber in degree $d$ of a coherent quotient of the
trivial bundle $S_d\otimes \mathcal{O}_B$, hence its fiber dimension is upper semicontinuous.
The stated inequality follows for specializations.
\end{proof}

\begin{filter}[Singular-locus Hilbert functions]\label{filter:sing-hf}
Fix $k\neq \ell$. If $\Phi_k\subseteq \Phi_\ell$, then for generic choices $f_k\in \Phi_k$ and
$f_\ell\in \Phi_\ell$ one has
\[
\HF_{\Sing(X_{f_k})}(d)\ge \HF_{\Sing(X_{f_\ell})}(d)\qquad \text{for all } d\ge 0.
\]
Consequently, if there exists $d$ with $\HF_{\Sing(X_{f_k})}(d)<\HF_{\Sing(X_{f_\ell})}(d)$, then
$\Phi_k\not\subseteq \Phi_\ell$.
\end{filter}

\paragraph{Computed (minimal) singular-locus Hilbert functions.}
For each $k=1,\dots,21$ we generated $5$ random perturbations of the weight--$0$ normal form $\nf_{k}$
(as in Filter~2) and computed $\HF_{\Sing(X_f)}(d)$ up to degree $40$.
For each $d$ we record the componentwise minimum
\[
h_k(d):=\min\{\HF_{\Sing(X_f)}(d): f \text{ among the samples in }\Phi_k\}.
\]
In the comparison below we apply Filter~\ref{filter:sing-hf} using these values $h_k(d)$.

\begin{proposition}[Effect of Filter 3]\label{prop:filter3-effect}
Applying Filter~\ref{filter:sing-hf} to the $90$ ordered pairs surviving Filter~2 eliminates $67$ further pairs,
leaving $23$ candidates for the subsequent filters. More precisely, the following $67$ pairs are discarded by the
singular-locus Hilbert-function criterion:
\[
\begin{aligned}
&k=1,\ \ell\in \{4,5,16\};\\
&k=2,\ \ell\in \{4,5,13,16,19\};\\
&k=3,\ \ell\in \{4,5,13,16,19\};\\
&k=4,\ \ell=18;\\
&k=5,\ \ell=18;\\
&k=6,\ \ell\in \{4,13,18\};\\
&k=7,\ \ell\in \{4,5,13,15,16,17,18\};\\
&k=8,\ \ell\in \{16,18\};\\
&k=9,\ \ell=18;\\
&k=10,\ \ell\in \{16,18\};\\
&k=11,\ \ell\in \{16,18\};\\
&k=12,\ \ell\in \{4,5,13,15,16,18,19\};\\
&k=14,\ \ell\in \{4,5,8,9,10,13,15,16,17,18,19\};\\
&k=15,\ \ell=16;\\
&k=17,\ \ell\in \{16,18\};\\
&k=19,\ \ell\in \{16,18\};\\
&k=20,\ \ell\in \{4,5,8,9,10,13,15,16,17,19\};\\
&k=21,\ \ell\in \{16,18\}.
\end{aligned}
\]
\end{proposition}

Hence the only ordered pairs $(k, l)$ not ruled out by Filters 1--3 are:
\begin{center}
(1, 13), (1, 15), (1, 17), (1, 18), (1, 19), \\
(2, 15), (2, 17), (2, 18), (3, 15), (3, 17), (3, 18), \\
(5, 16), (6, 5), (6, 15), (6, 16), (6, 17), (6, 19), \\
(7, 19), (9, 16), (12, 17), (13, 18), (15, 18), (20, 18).
\end{center}

\subsection{8.4 Filter 4: Singular-locus degrees (leading term)}

In this filter we extract a coarse but robust numerical invariant from the
singular-locus Hilbert function, namely the \emph{scheme-theoretic degree of the
one-dimensional part}. This ignores any isolated (possibly nonreduced) points,
since such components contribute only to the constant term of the Hilbert
polynomial.

\medskip

\noindent\textbf{(A) Scheme-theoretic degree via Hilbert polynomial.}
Let $Z \subset \PP^6$ be a closed subscheme with $\dim Z \le 1$, and write
\[
HF_Z(d) := \dim_{\mathbb{C}}(S/I_Z)_d
\]
for its Hilbert function. For $d\gg 0$, $HF_Z(d)$ agrees with a linear Hilbert
polynomial
\[
P_Z(d) \;=\; \deg_1(Z)\, d + \chi(\mathcal{O}_Z),
\]
where $\deg_1(Z) \in \mathbb{Z}_{\ge 0}$ is the coefficient of $d$.
Equivalently,
\[
\deg_1(Z)
\;=\;
\lim_{d\to\infty}\big(HF_Z(d)-HF_Z(d-1)\big).
\]
By construction, $\deg_1(Z)$ is \emph{scheme-theoretic}: it counts multiplicities
along $1$-dimensional components, and it is insensitive to embedded/isolated
$0$-dimensional structure.

\medskip

\noindent\textbf{(B) Practical extraction from the computed $h_k(d)$.}
For a cubic $f$ we apply this to $Z=\Sing(X_f)$ and write
\[
\deg_1\Sing(X_f)
\;:=\;
\lim_{d\to\infty}\Big(HF_{\Sing(X_f)}(d)-HF_{\Sing(X_f)}(d-1)\Big).
\]
In our computations of \S8.3 we recorded the componentwise minima
$h_k(d)$ among random perturbations in $\Phi_k$; the corresponding degree can be
read off from the stabilized first difference $\Delta h_k(d)=h_k(d)-h_k(d-1)$
for $d\gg 0$.

\begin{lemma}[Leading-coefficient monotonicity]\label{lem:degree-monotone}
Let $Z,Z'\subset \PP^6$ be closed subschemes with $\dim Z,\dim Z' \le 1$.
If $HF_Z(d)\ge HF_{Z'}(d)$ for all $d\ge 0$, then $\deg_1(Z)\ge \deg_1(Z')$.
\end{lemma}

\begin{proof}
For $d\gg 0$ we have $HF_Z(d)=P_Z(d)$ and $HF_{Z'}(d)=P_{Z'}(d)$, with
$P_Z(d)=\deg_1(Z)\,d+\chi(\mathcal{O}_Z)$ and $P_{Z'}(d)=\deg_1(Z')\,d+\chi(\mathcal{O}_{Z'})$.
If $\deg_1(Z)<\deg_1(Z')$, then $P_Z(d)<P_{Z'}(d)$ for all $d\gg 0$, contradicting
the assumed inequality $HF_Z(d)\ge HF_{Z'}(d)$ in large degree.
\end{proof}

\begin{filter}[Singular-locus degrees]\label{filter:sing-degree}
Fix $k\ne \ell$. If $\Phi_k \subseteq \Phi_\ell$, then for generic choices
$f_k\in \Phi_k$ and $f_\ell\in \Phi_\ell$ one has
\[
\deg_1\Sing(X_{f_k}) \;\ge\; \deg_1\Sing(X_{f_\ell}).
\]
Consequently, if $\deg_1\Sing(X_{f_k})<\deg_1\Sing(X_{f_\ell})$, then
$\Phi_k \not\subseteq \Phi_\ell$.
\end{filter}

\begin{cor}[Six immediate non-inclusions]\label{cor:six-degree-pairs}
The following six ordered pairs are ruled out by Filter~\ref{filter:sing-degree}:
\[
(1,13),\ (1,15),\ (1,17),\ (2,15),\ (3,15),\ (6,15).
\]
\end{cor}

\begin{proof}
By Table~3, the degrees of the top-dimensional singular-locus components are
\begin{itemize}
\item[] $\deg_1\Sing(X_1)=2$, $\deg_1\Sing(X_{13})=4$,
\item[] $\deg_1\Sing(X_{15})=9$, $\deg_1\Sing(X_{17})=3$,
\item[] $\deg_1\Sing(X_2)=\deg_1\Sing(X_3)=\deg_1\Sing(X_6)=5$.
\end{itemize}
Each of the listed pairs has $\deg_1$ strictly increasing from $k$ to $\ell$,
so $\Phi_k\subseteq \Phi_\ell$ is impossible by
Filter~\ref{filter:sing-degree}.
\end{proof}

\begin{proposition}[Updated candidate list after Filter 4]\label{prop:after-filter4}
Applying Filter~\ref{filter:sing-degree} to the $23$ pairs remaining after
Proposition~8.7 eliminates $6$ further pairs. Hence the only ordered pairs
not ruled out by Filters~1--4 are:
\begin{itemize}
\item[] $(1,18),\ (1,19),\ (2,17),\ (2,18),\ (3,17),\ (3,18),\ (5,16),\ (6,5),\ (6,16),\ (6,17),$
\item[] $ (6,19),\ (7,19),\ (9,16),\ (12,17),\ (13,18),\ (15,18),\ (20,18)$.
\end{itemize}
\end{proposition}

\subsection{Filter 5: Minimal Exponents and Limits of 1-Cycles}
\label{subsec:filter5-minexp}

In Filters 2--4, we utilized numerically generic points $f \in \Phi$ to extract stable algebraic invariants (such as Betti numbers), which tend to jump and become noisy at the highly symmetric $1$-PS limits. However, to efficiently rule out the majority of the remaining candidate pairs, we directly compare the geometric invariants of the polystable normal forms $\nf$. This is theoretically justified by the following lemma.

\begin{lemma}[Semicontinuity on singular $1$-cycles]\label{lem:1cycle-semi}
Let $X_\ell \rightsquigarrow X_k$ be a flat specialization of cubic fivefolds in $\PP^6$,
realized by a flat family $\mathcal{X}\to \Delta$ over a smooth pointed curve $(\Delta,0)$
with generic fiber $X_\ell$ and special fiber $X_k$.
Let $\Gamma_\ell\subset \Sing(X_\ell)$ be an irreducible curve component and define
\[
\widetilde{\alpha}_{\mathrm{gen}}(\Gamma_\ell)
\;:=\;
\widetilde{\alpha}_{p}(X_\ell)\qquad\text{for a general closed point }p\in \Gamma_\ell .
\]

Let $\Sing(\mathcal{X}/\Delta)\subset \mathcal{X}$ be the relative singular scheme, and let
$\overline{\Gamma}\subset \Sing(\mathcal{X}/\Delta)$ be the scheme-theoretic closure of $\Gamma_\ell$.
Write $(\overline{\Gamma}\cap X_k)^{(1)}$ for the union of the $1$-dimensional components of
$\overline{\Gamma}\cap X_k$ with their scheme structures, and set
\[
Z_k \;:=\; \bigl[(\overline{\Gamma}\cap X_k)^{(1)}\bigr]
\]
for its fundamental $1$-cycle.

Then:
\begin{enumerate}
\item $Z_k$ is an effective $1$-cycle supported on $\Sing(X_k)$, and
      $\deg Z_k=\deg \Gamma_\ell$.
\item There exists an irreducible curve component $\Gamma_k\subset \Supp(Z_k)$ such that
\[
\widetilde{\alpha}_{\mathrm{gen}}(\Gamma_k)\;\le\;\widetilde{\alpha}_{\mathrm{gen}}(\Gamma_\ell).
\]
\item If $\Gamma_k$ is a curve component of $\Sing(X_k)$ with multiplicity
      $m_k(\Gamma_k)$ in the fundamental $1$-cycle $[\Sing(X_k)]^{(1)}$,
      then the coefficient of $\Gamma_k$ in $Z_k$ is at most $m_k(\Gamma_k)$.
\end{enumerate}
\end{lemma}

\begin{proof}
Since $\overline{\Gamma}\subset \Sing(\mathcal{X}/\Delta)$, its special fiber
$\overline{\Gamma}\cap X_k$ is a closed subscheme of $\Sing(X_k)$.
Hence $Z_k$ is effective and supported on $\Sing(X_k)$.

The cycle $Z_k$ is the specialization of the cycle $[\Gamma_\ell]$ inside the proper family
$\overline{\Gamma}\to \Delta$. In particular, its degree (equivalently, its class in
$A_1(\PP^6)\cong \mathbb{Z}$) is preserved under specialization, so $\deg Z_k=\deg \Gamma_\ell$.

Choose an irreducible component $\Gamma_k$ of $\Supp(Z_k)$ that is dominated by $\overline{\Gamma}$.
Let $p_0\in \Gamma_k$ be a general closed point. After possibly shrinking $\Delta$,
we may choose a curve of points $p(t)\in \overline{\Gamma}\cap X_t$ specializing to $p_0$.
By lower semicontinuity of minimal exponents in flat families (see, e.g.,
\cite[Theorem 2.3]{Par25}),
\[
\widetilde{\alpha}_{p_0}(X_k)\;\le\;\liminf_{t\to 0}\widetilde{\alpha}_{p(t)}(X_t).
\]
For $t\neq 0$ general, $p(t)$ is a general point of $\Gamma_\ell$, hence the right-hand side equals
$\widetilde{\alpha}_{\mathrm{gen}}(\Gamma_\ell)$. Since $p_0$ is general on $\Gamma_k$,
the left-hand side equals $\widetilde{\alpha}_{\mathrm{gen}}(\Gamma_k)$, proving (2).

Finally, (3) follows from the surjection of local rings at the generic point $\eta_{\Gamma_k}$:
\[
\mathcal{O}_{\Sing(X_k),\eta_{\Gamma_k}}
\twoheadrightarrow
\mathcal{O}_{(\overline{\Gamma}\cap X_k)^{(1)},\eta_{\Gamma_k}},
\]
so lengths (hence cycle multiplicities) can only decrease.
\end{proof}

\begin{filter}[Minimal exponent and cycle degree]\label{filter:minexp-cycle}
Fix $k\neq \ell$ and suppose $\Phi_k\subseteq \Phi_\ell$.
Let $X_\ell=V(\nf_\ell)$ and $X_k=V(\nf_k)$ be the closed-orbit representatives.

Let
\[
Z_\ell^{BC}
\;:=\;
\sum_{\substack{\Gamma\subset \Sing(X_\ell)\\ \Gamma\ \text{of Type B or C}}}
m_\ell(\Gamma)\,[\Gamma]
\]
be the effective $1$-cycle supported on the Type~B/C curve components of $\Sing(X_\ell)$,
where $m_\ell(\Gamma)$ denotes the multiplicity of $\Gamma$ in the fundamental $1$-cycle
$[\Sing(X_\ell)]^{(1)}$. Set $D_\ell:=\deg Z_\ell^{BC}$.

Then any flat degeneration $X_\ell\rightsquigarrow X_k$ forces the existence of an effective
$1$-cycle
\[
Z_k^{BC}
\;=\;
\sum_{\substack{\Gamma\subset \Sing(X_k)\\ \Gamma\ \text{of Type B or C}}}
a(\Gamma)\,[\Gamma]
\]
supported on the Type~B/C components of $\Sing(X_k)$ such that
\[
\deg Z_k^{BC}=D_\ell,
\qquad
0\le a(\Gamma)\le m_k(\Gamma)\ \ \text{for all Type~B/C components }\Gamma\subset \Sing(X_k),
\]
where $m_k(\Gamma)$ is the multiplicity of $\Gamma$ in $[\Sing(X_k)]^{(1)}$.
In particular, if no such choice of coefficients $a(\Gamma)$ exists (e.g.\ if $D_\ell>0$
but $\Sing(X_k)$ has no Type~B/C component, or if $D_\ell$ cannot be realized as a degree
combination within the available multiplicities), then $\Phi_k\not\subseteq \Phi_\ell$.
\end{filter}

\begin{proposition}[Effect of Filter 5]\label{prop:filter5-effect}
Applying Filter~\ref{filter:minexp-cycle} to the $17$ candidate pairs remaining after Filter~4 eliminates $13$ pairs. 
Specifically, the following pairs are discarded:
\begin{itemize}
\item[] $(1, 18),\ (1, 19),\ (2, 17),\ (2, 18),\ (3, 17),\ (3, 18),\ (6, 17)$,
\item[] $(6, 19),\ (7, 19),\ (12, 17),\ (13, 18),\ (15, 18),\ (20, 18)$.
\end{itemize}
\end{proposition}

\begin{proof}
We analyze the limit of the $1$-dimensional components based on the data in Table~\ref{tab:section5-sing} and Table~\ref{tab:me-pattern-index-noA0}.

\noindent $\bullet$ \textbf{Pairs with $\ell=18$:} $(1, 18), (2, 18), (3, 18), (13, 18), (15, 18), (20, 18)$. \\
$X_{18}$ has two disjoint lines $L_1, L_2$ of Type B ($\widetilde{\alpha}_{\mathrm{gen}}=7/3$). Thus it carries a degree $2$ cycle with $\widetilde{\alpha}_{\mathrm{gen}}=7/3$.
\begin{itemize}
\item For $k \in \{1, 2, 3, 13\}$, all curve components of $X_k$ are of Type A1 ($\widetilde{\alpha}_{\mathrm{gen}}=5/2 > 7/3$). They cannot support the limit.
\item For $k=15$, the only component with $\widetilde{\alpha}_{\mathrm{gen}} \le 7/3$ is $L$ (Type C, degree $1$). However, $L$ has scheme-theoretic degree $1$ in $\Sing(X_{15})$ (as $\deg_1 \Sing(X_{15}) = 9 = 4+4+1$), which cannot absorb the limit of two lines (degree $2$).
\item For $k=20$, the components are $C$ (elliptic quartic, degree $4$) and $L$ (line, degree $1$). A limit of two lines must be supported on lines, but $X_{20}$ only has a degree $1$ line, which is insufficient.
\end{itemize}
Thus all these pairs are excluded.

\medskip
\noindent $\bullet$ \textbf{Pairs with $\ell=17$:} $(2, 17), (3, 17), (6, 17), (12, 17)$. \\
$X_{17}$ contains a smooth conic $C$ of Type B ($\widetilde{\alpha}_{\mathrm{gen}}=7/3$, degree $2$).
For $k \in \{2, 3, 6, 12\}$, all curve components of $X_k$ are of Type A1 ($\widetilde{\alpha}_{\mathrm{gen}}=5/2 > 7/3$). They cannot support the limit of the conic. Thus these are excluded.

\medskip
\noindent $\bullet$ \textbf{Pairs with $\ell=19$:} $(1, 19), (6, 19), (7, 19)$. \\
$X_{19}$ contains a smooth conic $C$ of Type C ($\widetilde{\alpha}_{\mathrm{gen}}=7/3$, degree $2$).
For $k \in \{1, 6, 7\}$, all curve components of $X_k$ are of Type A1 ($\widetilde{\alpha}_{\mathrm{gen}}=5/2 > 7/3$). Thus these are excluded.
\end{proof}

\begin{cor}[Updated candidate list after Filter 5]\label{cor:after-filter5}
The only ordered pairs not ruled out by Filters 1--5 are the following $4$ pairs:
\[
(5, 16),\ (6, 5),\ (6, 16),\ (9, 16).
\]
\end{cor}

\subsubsection{Filter 6: spectra of isolated singularities (and the last three pairs)}\label{subsubsec:filter6}

We now eliminate the remaining four pairs.
The key new ingredient is the Steenbrink spectrum of an \emph{isolated} hypersurface singularity, which satisfies a strong semicontinuity property.
This will rule out $(5,16)$ using only the quasi-homogeneous data recorded in Table~\ref{tab:sing-recap}.
The remaining three pairs are then excluded by a generic-stabilizer obstruction.

\medskip

\paragraph{(A) Spectrum semicontinuity for quasi-homogeneous isolated singularities.}

Recall that for an isolated hypersurface singularity $(f,0)$ the Steenbrink spectrum
\[ \Sp(f)=\sum_{\alpha\in\Q} n_\alpha\, t^\alpha \]
is a finite multiset of rational numbers (the spectral numbers) with multiplicities $n_\alpha\in\Z_{\ge 0}$.
We will use the following semicontinuity theorem.

\begin{thm}[Steenbrink]\label{thm:steenbrink-semicont}
Let $\{f_u\}$ be a holomorphic family of isolated hypersurface singularities.
For every half-open interval $(c,c+1]\subset\R$, the function
\[ u\longmapsto \#\bigl\{\alpha\in\Sp(f_u)\,\big|\, c<\alpha\le c+1\bigr\} \]
is upper semicontinuous.
Equivalently, under specialization $f_u\rightsquigarrow f_0$ one has
\[ \#\bigl(\Sp(f_0)\cap(c,c+1]\bigr)\ \ge\ \#\bigl(\Sp(f_u)\cap(c,c+1]\bigr)\qquad \text{for all }c\in\R. \]
\end{thm}

\begin{proof}
This is \cite[Thm.~(2.4)]{Ste85}.
\end{proof}

For quasi-homogeneous isolated singularities the spectrum can be computed from the weights.
We will use the following standard recipe (see e.g. \cite{Sai71}).

\begin{lemma}[Spectrum from the weighted Milnor algebra]\label{lem:spectrum-from-hilbert}
Let $g\in\C[x_1,\dots,x_c]$ be quasi-homogeneous of weighted degree $D$ with weights $\wt(x_i)=w_i\in\Z_{>0}$, and assume that $g$ has an isolated singularity at the origin.
Let $M(g)=\C[x_1,\dots,x_c]/(\partial g)$ be the Milnor algebra, graded by the weighted degree, and write its Hilbert series as
\[ H_{M(g)}(t)=\sum_{m\ge 0} h_m t^m. \]
Then the spectrum of $g$ is the multiset
\[ \Sp(g)=\bigl\{\ \alpha(m):=\tfrac{m+\sum_i w_i}{D}-1\ \text{ with multiplicity }h_m\ \bigr\}. \]
\end{lemma}

\begin{proof}
Since $g$ is weighted homogeneous with an isolated singularity, the partial derivatives form a graded regular sequence of degrees $D-w_i$.
Hence
\[ H_{M(g)}(t)=\prod_{i=1}^c \frac{1-t^{D-w_i}}{1-t^{w_i}}, \]
and the spectral numbers are obtained by the usual shift from the weighted degrees of a homogeneous basis of $M(g)$ (cf. \cite{Sai71}).
\end{proof}

We now apply this to the two isolated singularity types appearing on $X_5$ and $X_{16}$ (Table~\ref{tab:sing-recap}).
The singularities $E_{20}$ and $E_{21}$ are quasi-homogeneous with
\[ E_{20}:\ \mathrm{QH}(1,2,2;6)_{20},\qquad E_{21}:\ \mathrm{QH}(2,3,5;12)_{21}. \]
By Lemma~\ref{lem:spectrum-from-hilbert} one finds
\begin{align*}
\Sp(E_{20})&=\bigl\{-\tfrac{1}{6},\ 0,\ (\tfrac{1}{6})^{\times 3},\ (\tfrac{1}{3})^{\times 3},\ (\tfrac{1}{2})^{\times 4},\ (\tfrac{2}{3})^{\times 3},\ (\tfrac{5}{6})^{\times 3},\ 1,\ \tfrac{7}{6}\bigr\},\\
\Sp(E_{21})&=\bigl\{-\tfrac{1}{6},\ 0,\ \tfrac{1}{12},\ \tfrac{1}{6},\ (\tfrac{1}{4})^{\times 2},\ (\tfrac{1}{3})^{\times 2},\ \tfrac{5}{12},\ (\tfrac{1}{2})^{\times 3},\ \tfrac{7}{12},\ (\tfrac{2}{3})^{\times 2},\ (\tfrac{3}{4})^{\times 2},\ \tfrac{5}{6},\ \tfrac{11}{12},\ 1,\ \tfrac{7}{6}\bigr\}.
\end{align*}

Set $c:=\tfrac{19}{24}$.
Then
\[ \#\bigl(\Sp(E_{20})\cap(c,c+1]\bigr)=5 \qquad\text{but}\qquad \#\bigl(\Sp(E_{21})\cap(c,c+1]\bigr)=4, \]
because in $(c,c+1]$ the spectrum of $E_{20}$ contributes
$\{(\tfrac{5}{6})^{\times 3},1,\tfrac{7}{6}\}$ whereas the spectrum of $E_{21}$ contributes
$\{\tfrac{5}{6},\tfrac{11}{12},1,\tfrac{7}{6}\}$.
By Theorem~\ref{thm:steenbrink-semicont} it follows that $E_{21}$ cannot occur as a specialization of $E_{20}$.

\begin{proposition}\label{prop:filter6-516}
The inclusion $\Phi_5\subset\Phi_{16}$ is impossible.
\end{proposition}

\begin{proof}
Assume $\Phi_5\subset\Phi_{16}$.
Then the closed orbit representative $X_5$ lies in the closure of $\Phi_{16}$.
Choose a one-parameter degeneration $X_t\in\Phi_{16}$ with limit $X_0\cong X_5$.
For $t\ne 0$, the variety $X_t$ has two isolated singular points of type $E_{20}$ (Proposition~\ref{prop:case-k16}); the limit $X_0$ has two isolated singular points of type $E_{21}$.
Working in disjoint analytic neighborhoods of the singular points, we obtain families of isolated hypersurface singularities and can apply Theorem~\ref{thm:steenbrink-semicont}.
This contradicts the strict inequality
$\#\bigl(\Sp(E_{21})\cap(c,c+1]\bigr)<\#\bigl(\Sp(E_{20})\cap(c,c+1]\bigr)$.
\end{proof}

\paragraph{(B) The remaining three pairs.}

After Proposition~\ref{prop:filter6-516}, the only remaining candidate inclusions are
\[ \Phi_6\subset\Phi_{5},\qquad \Phi_6\subset\Phi_{16},\qquad \Phi_9\subset\Phi_{16}. \]
We exclude these by comparing the generic connected stabilizer tori.

\begin{lemma}[Generic stabilizer type obstruction]\label{lem:generic-stabilizer-obstruction}
Let $G$ act on a projective variety $X$ and let $S\subset X$ be a $G$-stable irreducible subset.
Assume that a general point of $S$ has connected stabilizer a one-dimensional torus $T\cong\C^*$.
If $S\subset S'$ for another $G$-stable irreducible subset $S'\subset X$ whose general point has connected stabilizer a one-dimensional torus $T'$, then $T'$ is conjugate to $T$ in $G$.
In particular, if both $T$ and $T'$ are diagonal one-parameter subgroups of $\SL_7$ then their weight multisets must agree.
\end{lemma}

\begin{proof}
Under specialization the connected stabilizer can only increase.
Hence for $S\subset S'$ the generic torus stabilizer of $S'$ embeds into the generic torus stabilizer of $S$.
Since both are one-dimensional tori, the embedding is an isomorphism, i.e. the two tori are conjugate.
For diagonal one-parameter subgroups of $\SL_7$, conjugacy is equivalent to equality of the weight multiset.
\end{proof}

In our setting, the closed orbit representatives $X_k$ are fixed by the diagonal one-parameter subgroups $\lambda_k$ listed in Section~\ref{algorithm}.
For $k\in\{5,6,9,16\}$ these are
\begin{align*}
\lambda_5(t)&=\diag(t^{4},t^{2},t^{1},t^{0},t^{-1},t^{-2},t^{-4}),\\
\lambda_6(t)&=\diag(t^{5},t^{3},t^{2},t^{1},t^{-1},t^{-4},t^{-6}),\\
\lambda_9(t)&=\diag(t^{2},t^{2},t^{0},t^{0},t^{-1},t^{-1},t^{-2}),\\
\lambda_{16}(t)&=\diag(t^{2},t^{1},t^{0},t^{0},t^{0},t^{-1},t^{-2}).
\end{align*}
Their weight multisets are pairwise distinct, so Lemma~\ref{lem:generic-stabilizer-obstruction} rules out the three remaining inclusions.

\begin{cor}\label{cor:all-pairs-eliminated}
After applying Filters~1--6, there are no remaining candidate inclusions $\Phi_k\subset\Phi_\ell$ with $k\ne\ell$.
In particular, Theorem~\ref{non-iclusion} holds.
\end{cor}

\section{Concluding Remarks and Future Directions}
\label{sec:future}

The results of this paper turn the strictly semistable GIT boundary for cubic fivefolds into an explicit and tractable set of test degenerations.
Concretely, we obtain:
the list of $21$ $\SL(7)$-inequivalent boundary families (Theorem~\ref{21-list}),
explicit polystable normal forms and component dimensions (Section~\ref{sec:closed-orbit} and Table~\ref{tab:case-summary}),
a complete description of the singular loci of the closed-orbit representatives (Section~\ref{sec:5} and Table~\ref{tab:section5-sing}),
the six isolated analytic types that occur on the boundary (Table~\ref{tab:sing-recap}),
the sharp minimal-exponent equalities at the critical threshold (Theorems~\ref{thm:minexp-73} and~\ref{thm:global-minimal-exponents}),
and the sparse Kirwan wall-crossing adjacency graph among boundary components (Section~\ref{sec:adjacency} and Theorem~\ref{thm:adjacency}).
Together with the non-inclusion statement (Theorem~\ref{non-iclusion}),
this provides a concrete ``dictionary'' for the boundary geometry of the moduli space.

We close by outlining several directions where such explicit boundary models should be useful.
A long-term motivation is to leverage these degenerations in Hodge-theoretic and integrable-systems approaches to the
intermediate Jacobians of cubic fivefolds (Section~\ref{subsec:schottky}).
To make that program viable, however, one needs a substantially refined understanding of
(i) the deformation and moduli theory of the extremal isolated singularities (Section~\ref{subsec:extremal-analysis}),
(ii) the resulting limit mixed Hodge structures for boundary degenerations (Section~\ref{subsec:lmhs}),
and (iii) the extent to which the present methodology scales to higher-dimensional cubic hypersurfaces (Section~\ref{subsec:higher-dim}).
We also record a conjectural structural principle suggested by our computations (Section~\ref{subsec:conjecture}).

\subsection{Detailed analysis of extremal cubic fivefold singularities}
\label{subsec:extremal-analysis}

Isolated singular points occur only for a proper subset of the boundary families (Table~\ref{tab:section5-sing}),
and, on general closed-orbit representatives, every isolated boundary germ belongs to one of the six quasi-homogeneous analytic types listed in
Table~\ref{tab:sing-recap}.
A key outcome of Section~\ref{sec:minimal-exponents} is that each of these isolated types realizes the equality case in Park's threshold:
Theorem~\ref{thm:minexp-73} shows
\[
\widetilde{\alpha}_x(X)=\frac{7}{3}=\frac{n+1}{d}\qquad\text{for }(n,d)=(6,3)
\]
at every isolated boundary point.
Motivated by this sharp borderline behavior, we singled out these six analytic types as
\emph{extremal} (Definition~\ref{def:extremal-sing}).

Beyond their role in the GIT classification, these singularities appear to be a natural and rigid collection of
``boundary'' local models for cubic fivefolds, and they invite a more intrinsic analysis.
Several concrete problems seem especially compelling:

\begin{itemize}
  \item \textbf{Local deformation theory and comparison with wall-crossing.}
  A systematic study of the miniversal deformation spaces (semi-universal unfoldings) of the six extremal types
  should clarify the local analytic structure of the moduli space near boundary points.
  Since the singularities are quasi-homogeneous, one expects the Milnor and Tjurina numbers to coincide, and
  the deformation space can be approached via explicit Tjurina algebras.
  A natural goal is to understand how (semi)stability conditions cut out chambers inside these bases and how the resulting local pictures reflect
  Kirwan wall-crossing (Definition~\ref{def:adjacency}, Theorem~\ref{thm:adjacency}).

  \item \textbf{Connections to $K$-stability and $K$-moduli.}
  With the rapid development of $K$-moduli for Fano varieties, it is natural to ask how the classical GIT compactification compares to the
  $K$-moduli compactification for cubic fivefolds.
  While minimal exponents are Hodge-theoretic invariants and do not directly encode discrepancies,
  the uniform inequality $\widetilde{\alpha}_x(X)=\frac{7}{3}>1$ for the extremal isolated types
  suggests that these singularities should be relatively mild from several birational and Hodge-theoretic perspectives.
  It would therefore be very interesting to determine for each explicit extremal local model whether it is klt,
  and more broadly to test local $K$-stability by computing normalized volumes and related stability thresholds.

  \item \textbf{Finer Hodge-theoretic and topological invariants.}
  Since these extremal germs govern the most degenerate isolated limits on the GIT boundary,
  their refined invariants (Hodge spectrum, monodromy, Milnor lattice, Brieskorn lattice, etc.)
  should feed directly into computations of limiting Hodge data for global degenerations (cf.\ Section~\ref{subsec:lmhs}).
\end{itemize}

\subsection{Limit mixed Hodge structures and moduli compactifications}
\label{subsec:lmhs}

For cubic threefolds and fourfolds, a recurring theme is that explicit GIT compactifications interact in a meaningful way with
Hodge-theoretic compactifications of period domains (e.g.\ \cite{CG72,Laz09}).
For cubic fivefolds, understanding the degeneration of periods and intermediate Jacobians \cite{Col86}
requires an analysis of the associated limit mixed Hodge structures (LMHS).

From this perspective, two kinds of input are indispensable:
one needs explicit local models of singular fibers, and one needs control over how boundary strata meet.
The present paper provides precisely this data in a concrete form:
explicit closed-orbit boundary representatives (Table~\ref{tab:case-summary}),
their singular loci (Section~\ref{sec:5}, Table~\ref{tab:section5-sing}),
and the adjacency relations among boundary components under Kirwan wall-crossing (Section~\ref{sec:adjacency}, Theorem~\ref{thm:adjacency}).
Combined with the local invariants of the extremal isolated types (Section~\ref{subsec:extremal-analysis}),
this should make it feasible to construct transverse slices to boundary strata and to compute local Picard--Lefschetz transformations
and monodromy cones in explicit families.
A natural medium-term objective is therefore a systematic computation of LMHS for selected degenerations built from our $21$ boundary models,
with an eye toward comparisons between the GIT boundary and Hodge-theoretic compactifications.

\subsection{Towards the Schottky problem for cubic fivefolds via integrable systems}
\label{subsec:schottky}

Let $X\subset \PP^{6}$ be a smooth cubic fivefold.
Its intermediate Jacobian $J(X)$ is a principally polarized abelian variety (PPAV) of dimension $21$,
and a Torelli-type statement is available in this setting (see \cite{Voi86}).
Thus the period map may be viewed as embedding the $35$-dimensional moduli space of smooth cubic fivefolds
into the $231$-dimensional PPAV moduli space $\mathcal{A}_{21}$.
A basic and widely open problem is to characterize, or even effectively constrain, this $35$-dimensional locus inside $\mathcal{A}_{21}$.

In the classical Schottky problem for Jacobians of curves, integrable systems play a decisive role:
Shiota's theorem characterizes Jacobians among PPAVs via the KP hierarchy \cite{Shiota86}.
This motivates the question of whether the locus of intermediate Jacobians of cubic fivefolds admits
an analogous characterization in terms of modular/differential constraints on theta functions.
At present, even formulating a plausible candidate hierarchy is nontrivial, and any integrable-systems approach must confront degenerations.

Here the boundary classification becomes relevant.
Theta-functional constraints are typically most informative when one understands their behavior under degeneration,
where period matrices and theta functions develop controlled asymptotics.
Our $21$ explicit boundary families (Theorem~\ref{21-list} and Table~\ref{tab:case-summary}),
together with the specialization relations encoded by Theorem~\ref{thm:adjacency},
provide a concrete set of test degenerations and a combinatorial skeleton for how boundary strata meet.
A natural open-ended program is therefore:
use these explicit degenerations to compute LMHS and limiting period data (as in Section~\ref{subsec:lmhs}),
and then investigate whether the resulting locus in $\mathcal{A}_{21}$ satisfies distinctive modular or differential constraints
of an integrable-systems flavor.

\subsection{Extremal singularities on cubic sixfolds and higher dimensions}
\label{subsec:higher-dim}

The computational and structural pipeline used in this paper
(maximal strictly semistable supports via convex geometry, Luna's centralizer reduction, and polystability checks via
the convex-hull criterion or the Casimiro--Florentino criterion)
relies on general features of reductive group actions and is therefore, in principle, portable to higher-dimensional cubic hypersurfaces,
albeit with rapidly growing combinatorial complexity.

An immediate target is the GIT compactification for cubic sixfolds ($X\subset \PP^{7}$).
Park's stability criterion \cite[Theorem~A]{Par25} identifies the critical minimal exponent threshold
as $\frac{n+1}{d}$ for degree $d$ hypersurfaces in $\PP^{n}$; in particular, for cubic sixfolds $(n,d)=(7,3)$ the critical value is $\frac{8}{3}$.
Our results for cubic fivefolds show that the isolated boundary singularities realize the equality case
$\widetilde{\alpha}_x(X)=\frac{7}{3}$ (Theorem~\ref{thm:minexp-73}),
suggesting a broader principle: in higher dimensions, the strictly semistable boundary may again be governed by a distinguished collection
of quasi-homogeneous isolated singularities sitting exactly at the critical threshold.
Identifying such ``extremal'' types for cubic sixfolds would be a natural next step toward a higher-dimensional boundary dictionary.

\subsection{A hypersurface version of maximal polystability}
\label{subsec:conjecture}

One striking uniform phenomenon emerged throughout our explicit classification:
for every maximal $T$-strictly semistable support $I(\mathbf{r}_k)_{\ge 0}$ produced by the convex-geometric algorithm,
the associated $1$-PS degeneration
\[
\phi_k \;=\; \lim_{t\to 0}\lambda_{\mathbf{r}_k}(t)\cdot f_k
\]
of a general polynomial $f_k$ with $\Supp(f_k)=I(\mathbf{r}_k)_{\ge 0}$
landed on a \emph{polystable} point for the residual action of the corresponding centralizer.
In other words, once the support is maximal among strictly semistable ones,
the weight-$0$ initial form produced by the limiting process appears to have closed orbit automatically.

The fact that this automatic polystability holds flawlessly without a single exception across all 21 highly diverse boundary families computed in Section 4—despite the wide variation in their residual centralizers, ranging from tori to complicated non-abelian groups—provides overwhelming empirical evidence.

Heuristically, the convex-geometric maximality of the support dictates a combinatorial rigidity: one cannot add further monomials without making the support unstable. Intuitively, this extremality forces the barycenter to be strictly "trapped" in the relative interior of the weight polytope for the residual action, which exactly translates to polystability.

Motivated by this evidence, we record the following hypersurface-level formulation,
tailored to the natural $\SL(n{+}1)$-action on spaces of homogeneous polynomials.

\begin{conjecture}[Hypersurface Maximal Polystability]
\label{conj:maximal-polystable}
Fix integers $n\ge 1$ and $d\ge 2$, and let
\[
G=\SL(n{+}1), \qquad
W=\Sym^{d}(\C^{n{+}1})\cong \C[x_0,\dots,x_n]_d
\]
with the natural linear $G$-action.
Fix the diagonal maximal torus $T\subset G$.
Let
\[
I=\mathbb{Z}^{n+1}_{(d)}\cap \mathbb{Z}^{n+1}_{\ge 0}, \qquad
\eta=\Bigl(\tfrac{d}{n+1},\dots,\tfrac{d}{n+1}\Bigr)
\]
be the exponent simplex and its barycenter.
For $\mathbf{r}\in \mathbb{Z}^{n+1}_{(0)}$ define
\[
I(\mathbf{r})_{\ge 0}=\{\mathbf{i}\in I\mid \mathbf{r}\cdot \mathbf{i}\ge 0\}, \qquad
I(\mathbf{r})_{=0}=\{\mathbf{i}\in I\mid \mathbf{r}\cdot \mathbf{i}=0\}.
\]
We say that $I(\mathbf{r})_{\ge 0}$ is a \emph{maximal $T$-strictly semistable support} if
$\eta\in \mathrm{Conv}\bigl(I(\mathbf{r})_{\ge 0}\bigr)$, $\eta\in \partial\mathrm{Conv}\bigl(I(\mathbf{r})_{\ge 0}\bigr)$
(relative boundary inside the affine hyperplane $\mathrm{wt}=d$),
and $I(\mathbf{r})_{\ge 0}$ is maximal for these properties with respect to inclusion among subsets of $I$ of the form
$I(\mathbf{r}')_{\ge 0}$.
Let $\lambda_{\mathbf{r}}:\mathbb{G}_m\to T$ be the $1$-PS given by
$\lambda_{\mathbf{r}}(t)=\mathrm{diag}(t^{r_0},\dots,t^{r_n})$.
For a general form
\[
f=\sum_{\mathbf{i}\in I} a_{\mathbf{i}}x^{\mathbf{i}}\in W
\quad\text{with}\quad
\Supp(f)=I(\mathbf{r})_{\ge 0},
\]
consider the $1$-PS limit
\[
f_0 \;:=\; \lim_{t\to 0}\lambda_{\mathbf{r}}(t)\cdot f
\;=\;
\sum_{\mathbf{i}\in I(\mathbf{r})_{=0}} a_{\mathbf{i}}x^{\mathbf{i}}
\;\in\;
W^{\lambda_{\mathbf{r}}(\mathbb{G}_m)}.
\]
Then $f_0$ is polystable.
\end{conjecture}

\medskip
\noindent\textbf{Possible strategy for a proof.}
One may hope to prove Conjecture~\ref{conj:maximal-polystable} by combining Luna's centralizer reduction \cite{Lun75}
with the Casimiro--Florentino criterion \cite{CF12} (or, in toric cases, the convex-hull criterion,
Theorem~\ref{thm:convex-hull}).
Luna reduction translates closedness of the $G$-orbit of the $1$-PS limit $f_0$
into polystability for the residual action of the centralizer
$C_G(\lambda(\mathbb{G}_m))$ on the fixed locus $W^{\lambda(\mathbb{G}_m)}$.
After choosing a maximal torus in the centralizer, the Casimiro--Florentino/convex-hull criteria
reduce polystability to the existence of a strictly positive ``balancing'' linear identity in the real character space.

In the hypersurface case $W=\Sym^{d}\C^{n+1}$, this can be phrased as the existence of positive coefficients
$a_\alpha>0$ (summing to $1$) such that
\[
\sum_{\alpha\in \Supp(f_0)} a_\alpha\,\alpha
\;=\;
\frac{d}{n+1}(1,\ldots,1).
\]
The conjecture asserts that the maximal-support hypothesis forces such a strictly positive solution,
equivalently that the relevant weight polytope already contains the barycenter in its interior for the residual action.
If true, this would streamline the search for closed orbits in many hypersurface GIT problems:
once maximal strictly semistable supports are enumerated, the residual polystability check would become essentially automatic.

\end{document}